\def\H{{\cal H}}
\def\N{\mathbb{N}}
\def\R{\mathbb{R}}
\def\Z{\mathbb{Z}}
\def\C{\mathbb{C}}
\def\H2{H^2(\R^N)}
\def\L2{L^2(\R^N)}
\def\to{\rightarrow}
\def\H{{\cal H}}
\def\H1{H^1(\R)}
 \newcommand{\Del}[1]{}
\numberwithin{equation}{section}
\newtheorem{thm}{Theorem}[section]
\newtheorem{cor}[thm]{Corollary}
\newtheorem{lem}[thm]{Lemma}
\newtheorem{prop}[thm]{Proposition}
\newtheorem{definition}[thm]{Definition}
\theoremstyle{remark}
\newtheorem{remark}[thm]{Remark}
\newtheorem*{exam*}{Examples}
\begin{document}

\setcounter{page}{1}

\title[Large global solutions for NLS]{Large global solutions for nonlinear Schr\"odinger equations II, mass-supercritical, energy-subcritical cases}

\author{Marius Beceanu}
\address{Department of Mathematics and Statistics\\
University at Albany SUNY\\
Earth Science 110\\
Albany, NY, 12222, USA\\}
\email{mbeceanu@albany.edu}
\thanks{}

\author{Qingquan Deng}
\address{Department of Mathematics\\
Hubei Key Laboratory of Mathematical Science\\
Central China Normal University\\
Wuhan 430079, China.\\}
\email{dengq@mail.ccnu.edu.cn}
\thanks{}

\author{Avy Soffer}
\address{Department of Mathematics\\
Rutgers University\\
110 Frelinghuysen Rd.\\
Piscataway, NJ, 08854, USA\\}
\address{Department of Mathematics\\
Hubei Key Laboratory of Mathematical Science\\
Central China Normal University\\
Wuhan 430079, China.\\}
\email{soffer@math.rutgers.edu}
\thanks{}

\author{Yifei Wu}
\address{Center for Applied Mathematics\\
Tianjin University\\
Tianjin 300072, China}
\email{yerfmath@gmail.com}
\thanks{}

\subjclass[2010]{Primary  35Q55}


\keywords{Nonlinear Schr\"{o}dinger equation,
global well-posedness, scattering}

\maketitle

\begin{abstract}\noindent
In this paper, we consider the  defocusing mass-supercritical, energy-subcritical nonlinear Schr\"odinger equation,
$$
    i\partial_{t}u+\Delta u= |u|^p u, \quad (t,x)\in \R^{d+1},
$$
with $p\in (\frac4d,\frac4{d-2})$.
We prove that under some restrictions on $d,p$, any radial function in the rough space $H^{s_0}(\R^d),\textit{for some } s_0<s_c$ with the support away from the origin, there exists an incoming/outgoing decomposition, such that the initial data in the outgoing part leads to the global well-posedness and scattering forward in time; while the initial data in the incoming part leads to the global well-posedness and scattering   backward in time. The proof is based on Phase-Space analysis of the nonlinear dynamics.
\end{abstract}

\tableofcontents

\section{Introduction}
 The aim of this work is the study of global existence and scattering theory for the inter-critical nonlinear
Schr\"{o}dinger equation (NLS) in $3-5$ dimensions:
 \begin{equation}\label{eqs:NLS-cubic}
   \left\{ \aligned
    &i\partial_{t}u+\Delta u=\mu |u|^p u,
    \\
    &u(0,x)  =u_0(x),
   \endaligned
  \right.
 \end{equation}
 with $\mu=\pm1, p>0$.
Here $u(t,x):\R\times\R^d\rightarrow \C$ is a complex-valued function. The case $\mu=1$ is referred to the defocusing case, and $\mu=-1$  is referred to the focusing case. The class of solutions to equation (\ref{eqs:NLS-cubic}) are invariant under the scaling
\begin{equation}\label{eqs:scaling-p}
u(t,x)\to u_\lambda(t,x) = \lambda^{\frac2p} u(\lambda^2 t, \lambda x) \ \ {\rm for}\ \ \lambda>0,
\end{equation}
which maps the initial data as
\begin{eqnarray}
u(0)\to u_{\lambda}(0):=\lambda^{\frac2p} u_{0}(\lambda x) \ \ {\rm for}\ \ \lambda>0.\nonumber
\end{eqnarray}
Denote
$$
s_c=\frac d2-\frac2p.
$$
Then the scaling  leaves  $\dot{H}^{s_{c}}$ norm invariant, that is,
\begin{eqnarray*}
 \|u_\lambda(t)\|_{\dot H^{s_{c}}}=\|u(\lambda^2t)\|_{\dot H^{s_{c}}},
 \quad \|u(0)\|_{\dot H^{s_{c}}}=\|u_{\lambda}(0)\|_{\dot H^{s_{c}}},
\end{eqnarray*}
which is called \emph{critical regularity} $s_{c}$. It is also considered as the lowest regularity that problem  (\ref{eqs:NLS-cubic}) is well-posed for general $H^{s}(\R^d)$-data. Indeed, one can find some special initial datum belonging to $H^s(\R^d), s<s_c$ such that the problem   (\ref{eqs:NLS-cubic}) is ill-posed.

The $H^1$-solution of equation \eqref{eqs:NLS-cubic} also enjoys  the mass, momentum and energy
conservation laws, which read
\begin{equation}\label{eqs:energy-mass}
   \aligned
M(u(t))&:=\int |u(t,x)|^2\,dx=M(u_0),\\
P(u(t))&:=\textrm{Im}\int \overline{u(t,x)}\nabla u(t,x)\,dx=P(u_0),\\
E(u(t)) &:= \frac12\int |\nabla u(t,x)|^2\,dx + \frac{\mu}{p+2}\int
|u(t,x)|^{p+2} \,dx = E(u_0).
   \endaligned
\end{equation}

The well-posedness and scattering theory for Cauchy problem (\ref{eqs:NLS-cubic}) with initial data in $H^{s}(\R^d)$ were extensively studied, which we here  briefly review.
The local well-posedness theory follows from a standard fixed point argument, implying that  for all $u_{0}\in H^{s}(\R^d), s\ge s_c$, there exists $T_{0}>0$ such that its corresponding solution $u\in C([0,T_{0}),\ H^{s}(\R^d))$. In fact, the above $T_{0}$ depends on $\|u_{0}\|_{H^{s}(\R^d)}$ when $s>s_c$ and also the profile of $u_{0}$  when $s=s_c$. Some of the results can be found in Cazenave and Weissler \cite{CW1}.

The  fixed point argument used in local theory can be applied directly to prove the global well-posedness for solutions to equation (\ref{eqs:NLS-cubic})  with small initial data in $H^{s}(\R^d)$ with $s\geq s_c$.
 In the mass-supercritical, energy-subcritical cases, that is, $\frac4d<p<\frac4{d-2}$, if we consider the solution in energy space $H^1(\R^d)$, the local theory above together with  conservation laws  (\ref{eqs:energy-mass}), yields the global well-posedness for all initial data $u_0\in H^1(\R^d)$ in the defocusing case $\mu=1$, and for any initial data $u_0\in H^1(\R^d)$ with some restrictions in the focusing case.
Furthermore, the scattering under the same conditions were also obtained by Ginibre, Velo \cite{GiVe} in the defocusing case and Duyckaerts, Holmer and Roudenko \cite{DuHoRo} in the focusing case.

Recently, conditional global and scattering results for $s_c>1$ with the assumption of $u\in L^\infty_t(I,\dot H^{s_c}_x(\R^d))$ were considered by many authors, beginning with the major work \cite{KeMe-cubic-NLS-2010, KeMe-wave-2011-1}, and then developed by
\cite{Bu, DoMiMuZh-17,DKM, DuRo, MJJ, MWZ} and cited references.
 That is,  if the initial data $u_{0}\in \dot H^{s_c}(\R^d)$ and the solution has a priori estimate
\begin{eqnarray}
\sup_{0<t<T_{out}(u_{0})}\|u(t)\|_{\dot H^{s_c}_x(\R^d)}<+\infty, \label{uniformbound}
\end{eqnarray}
then $T_{out}(u_{0})=+\infty$ and the solution scatters in $\dot H^{s_c}(\R^d)$, here $[0,T_{out}(u_{0}))$ is the maximal interval for existence of the solution. Consequently, these results give blowup criterion in which the lifetime only depends on the critical norm $\|u\|_{L^\infty_t\dot H^{s_c}_x(\R^d)}$.  However,  no such large global
results are known for general initial data $u_{0}\in \dot H^{s_c}(\R^d)$.

In the case when $s_c<1$ the use of a priori estimates on the solution as a condition was also developed.
 The work of Bourgain \cite{Bou-1998} on the NLS, made assumptions on the space-time norm of the solution in space-time subsets, by deriving necessary conditions for blow-up. For example, Kenig and Merle \cite{KeMe-cubic-NLS-2010} proved that for the NLS in the intercritical case, in dimension 3 and cubic nonlinearity case, global existence and scattering hold under the condition
 $$
 \sup_{0<t<T_{out}(u_{0})} \|u(t)\|_{\dot H^{\frac12}(\R^d)} <\infty.
 $$
See also for examples some developments in \cite{KiMaMuVi-2018, XieFa-13}.

These conditional results, and other works, used the elaborate method introduced by Kenig-Merle, involving profile decomposition, concentration compactness  and the localized Strichartz estimates.
 In contrast, our results use explicit conditions on the \emph{initial data alone}. A further new consequence of our analysis is that supercritical/rough solutions exist, which are large and that in fact the standard assumption of initial data being in the space $H^{s_c}$ is not required.

We follow here a new paradigm, based on phase space analysis and propagation estimates, to give explicit conditions on the initial data, which implies global existence and scattering. This will allow us to give explicit conditions on large and rough initial data, for which global existence and scattering hold. Our method does not use the above techniques based on profile decomposition and concentration compactness.

When the initial data is rough, $s<s_c$, we do not have general global well-posedness. Yet, as we will show, generic conditions for global existence and scattering allow such initial data.
So, we conclude that rough initial data does not guarantee blowup.

 Rough data may appear naturally in some applications: initial data corrupted with noise (as in nonlinear optics applications), and in the construction of invariant measures for the NLS dynamics, see \cite{Bou-1994, Bou-1997} and further developments in \cite{CoOh-2012, Dodson-17,KiMuVi-2017, NaOhReSt-2015, OhOkPo-2017, Th-09}  and the cited references.
 In these last cases it is known that the relevant measure is supported on rough spaces.

 The phase-space analysis we use seeks to identify the initial data that can not move into the origin and blow-up.
 So, naturally, this analysis requires the distinction between outgoing waves under the free flow, which move away from the origin (for positive times), and the incoming waves which move towards the origin.

 In particular we show, that under some extra conditions, initial data which is outgoing, will lead to global solution, even if it is rough, that is, the data belongs to the space $H^s$, with $s<s_c,$ where $s_c$ is the critical Sobolev norm. While it looks unsurprising, it is in fact a subtle result!

 Under the free flow this initial data (radial) will move away from the origin, and therefore will get smaller in $L^{\infty}$, by the assumption of radial symmetry, and some $H^s$ regularity.  Moreover,  since the speed depends on the frequency, it gets smaller and faster for higher frequencies. Indeed, as an estimate essentially proved in Proposition \ref{prop:pre-outgoing} below, we show that
 \begin{align}
 \big\|\chi_{\le 1+Nt}e^{it\Delta}P_{\ge N}f_{out}\big\|_{H^1}\lesssim \|f\|_{L^2}.\label{est:smooth-effect}
\ \end{align}

  However, the property of being outgoing is not necessarily preserved by the nonlinearity. Furthermore, in a major difference from the wave equation case (see \cite{BS, BS-2}), part of the solution moves \emph{backward} towards the origin, even under the free flow.
   Since the data is rough, we do not have local existence (see the ill-posedness results in \cite{ChCoTa-Ill} and \cite{KePoVe-Duke-2001}), so, we can not move a short time forward, in the usual perturbative way e.g., as in \cite{S.Miao}.

 To counter these effects, we need optimal propagation estimates in space, frequency and time.
 In particular we need to use the extra smoothing effect for waves which move in the ``wrong'' direction, the classical forbidden regions, see \eqref{est:smooth-effect}.
 We also need gain of regularity in the Strichartz estimate for radial functions, which allows us to show that in fact the Duhamel part, contributed by the nonlinearity is essential in $H^1$, which is based on the following result.  A more general result will be stated in Section 4.
\begin{prop}\label{prop:out}
Let $d=3,4,5$.
For any  radial function $f\in L^2(\R^d)$ satisfying
\begin{align*}
\mbox{supp }f\subset \{x:|x|\ge 1\},
\end{align*}
there exists some suitable decomposition, says incoming and outgoing decomposition,
  $$
f=f_++f_-,
$$
such that the following supercritical space-time estimate holds for any $N\ge 1$,
\begin{align*}
\big\|e^{\pm it\Delta}P_Nf_\pm\big\|_{L^\infty_t H^1_x(\R^+\times\R^d)+L^2_tL^\infty_x(\R^+\times\R^d)}\lesssim\big\|P_Nf\big\|_{L^2}.
\end{align*}
\end{prop}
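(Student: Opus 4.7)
The plan is to build $f_\pm$ via a phase-space decomposition in the Hankel (radial Fourier) representation and then estimate the propagated oscillatory integral by stationary/non-stationary phase analysis. Write $f(x)=F(|x|)$ with Hankel transform $\tilde F$ of order $\nu=(d-2)/2$, so that
\[
F(r)=c_d\,r^{-\nu}\int_0^\infty \tilde F(\rho)\,J_\nu(r\rho)\,\rho^{\nu+1}\,d\rho,
\]
and split $J_\nu=\tfrac{1}{2}(H_\nu^{(1)}+H_\nu^{(2)})$ into outgoing and incoming Hankel kernels. I define $f_\pm$ by replacing $J_\nu$ in the above integral by $\tfrac{1}{2}H_\nu^{(1)}$ and $\tfrac{1}{2}H_\nu^{(2)}$, respectively. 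Since $\supp f\subset\{|x|\ge 1\}$ and $P_N$ localizes to $\rho\sim N\ge 1$, the relevant argument $|x|\rho$ is $\ge 1$, so the asymptotic $H_\nu^{(j)}(z)\sim c\,z^{-1/2}e^{\pm i(z-\nu\pi/2-\pi/4)}(1+O(1/z))$ (with $\pm$ for $j=1,2$) applies. Modulo smoother lower-order corrections, this yields
\[
e^{it\Delta}P_Nf_+(x)\;\approx\;\frac{c_\nu}{|x|^{(d-1)/2}}\int_0^\infty \phi(\rho/N)\,\tilde F(\rho)\,e^{i(|x|\rho-t\rho^2)}\,\rho^{(d-1)/2}\,d\rho,
\]
with $\phi$ a Littlewood--Paley bump and phase $\Phi(\rho)=|x|\rho-t\rho^2$ having critical point $\rho_*=|x|/(2t)$.

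Next, I decompose $\R^+\times\R^d$ via a smooth cutoff $\chi(t,x)$ into the trajectory region $|x|\sim 1+Nt$ (where $\rho_*$ lies inside $\supp\phi(\cdot/N)$) and its complement. On the complement, $\Phi$ is nonstationary on $\supp\phi(\cdot/N)$, and iterating integration by parts in $\rho$ produces arbitrary polynomial decay in $N\langle|x|-2Nt\rangle$; this absorbs the $N$-loss from one gradient and gives $\|(1-\chi)e^{it\Delta}P_Nf_+\|_{L^\infty_tH^1_x}\lesssim\|P_Nf\|_{L^2}$ immediately. On the trajectory region, stationary phase at $\rho_*$ with $\Phi''(\rho_*)=-2t$ delivers the pointwise bound
\[
|\chi e^{it\Delta}P_Nf_+(t,x)|\;\lesssim\;\frac{\rho_*^{(d-1)/2}\,|\tilde F(\rho_*)|}{|x|^{(d-1)/2}\,t^{1/2}}\;\sim\;t^{-d/2}\,|\tilde F(\rho_*)|
\]
on $|x|\sim 2Nt$. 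Taking $L^\infty_x$ at each time (a supremum over $\rho_*\sim N$) and then $L^2_t$, and using the Plancherel identity $\|\tilde F\phi(\cdot/N)\|_{L^2(\rho^{d-1}d\rho)}\sim\|P_Nf\|_{L^2}$, yields the required $L^2_tL^\infty_x$ control.

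The main obstacle is the trajectory-region estimate: a naive Strichartz-plus-Bernstein bound gives only $N^{(d-2)/2}\|P_Nf\|_{L^2}$, and the needed gain of $N^{(d-2)/2}$ arises precisely from the spatial weight $|x|^{-(d-1)/2}$ in the Hankel representation, which effectively reduces the $d$-dimensional problem to a one-dimensional dispersive estimate for the reduced profile $v=r^{(d-1)/2}u$ on the half-line. Secondary difficulties are controlling the $O(1/(|x|\rho))$ corrections in the Hankel asymptotic (they form a geometric series summable thanks to $|x|\rho\ge 1$), handling the small-time regime $0<t<1/N$ where the stationary region still lies near $\supp f$ (here one instead uses the improved radial Bernstein $\|u\|_{L^\infty(\R^d)}\lesssim N^{1/2}\|u\|_{L^2(\R^d)}$, valid because the reduced profile is supported in $r\ge 1$), and the restriction $d\in\{3,4,5\}$, which keeps the inverse-square term $(d-1)(d-3)/(4r^2)$ in the one-dimensional radial equation uniformly bounded on $r\ge 1$. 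The claim for $f_-$ under $e^{-it\Delta}$ follows from the $f_+$ case by time reversal $t\leftrightarrow -t$ combined with complex conjugation, which exchange $H_\nu^{(1)}$ and $H_\nu^{(2)}$.
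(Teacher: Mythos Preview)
Your overall architecture matches the paper's: define outgoing/incoming pieces via the radial Fourier (Hankel) representation, then split the propagation into a region along the classical trajectory $|x|\sim 1+Nt$ and its complement, treating the latter by non-stationary phase. Two substantive points, however, separate your sketch from a proof.

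First, the naive splitting $J_\nu=\tfrac12(H_\nu^{(1)}+H_\nu^{(2)})$ produces $f_\pm$ that are singular at the origin: since $H_\nu^{(1)}(z)\sim c\,z^{-\nu}$ as $z\to 0$, one gets $r^{-\nu}H_\nu^{(1)}(r\rho)\rho^{\nu+1}\sim r^{-(d-2)}\rho$, and $|x|^{-(d-2)}\notin L^2_{\mathrm{loc}}(\R^d)$ for $d=4,5$. Your assertion that ``the relevant argument $|x|\rho$ is $\ge 1$'' uses the support of $f$ to control the argument in the \emph{forward} Hankel integral for $\tilde F$, but says nothing about small $|x|$ in the \emph{inverse} integral defining $f_+(x)$; in particular the non-trajectory region at small $t$ contains $|x|\lesssim 1/N$, where your asymptotic expansion and your integration-by-parts gain both fail. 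The paper handles this by subtracting an explicit correction $K(r)$ capturing the leading singular terms, and then proving separately that the resulting $f_{out}$ is essentially supported in $|x|\gtrsim 1$ (Lemma~\ref{lem:supportf+}) and is $L^2$-bounded (Proposition~\ref{prop:bound-fpm-L2}). Without such a correction the decomposition is not even an $L^2$ splitting for $d=4,5$; this, rather than the size of $(d-1)(d-3)/(4r^2)$, is the actual reason for the dimensional restriction.

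Second, your trajectory-region bound does not close. Stationary phase at $\rho_*=|x|/(2t)$ returns a \emph{pointwise} value $|\tilde F(\rho_*)|$; taking $L^\infty_x$ over $|x|\sim Nt$ and then $L^2_t$ yields roughly $N^{(d-1)/2}\sup_{\rho\sim N}|\tilde F(\rho)|$, whereas $\|P_Nf\|_{L^2}\sim N^{(d-1)/2}\|\tilde F\|_{L^2(\rho\sim N,\,d\rho)}$. There is no passage from the sup to the $L^2$ norm here --- the support condition on $f$ gives no usable control on $\partial_\rho\tilde F$ since $|x|$ is unbounded --- so the invocation of ``Plancherel'' is a non sequitur. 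The paper avoids pointwise stationary phase on this region entirely: it first proves the (corrected) outgoing projector is bounded on $\dot H^s$ with loss $2^{sk}$ (Corollary~\ref{cor:f+-Hs}), and then uses the radial Sobolev inequality $\||x|^{\alpha}g\|_{L^q}\lesssim\|g\|_{\dot H^s}$ to convert the spatial localization $|x|\gtrsim 2^j+2^kt$ into time decay (Proposition~\ref{prop:outgoing}, Corollary~\ref{cor:outgoing}). With $s=\tfrac12$, $\alpha=\tfrac{d-1}{2}$ this gives $\|e^{it\Delta}(\cdot)\|_{L^\infty_x(|x|\gtrsim Nt)}\lesssim (Nt)^{-(d-1)/2}N^{1/2}\|P_Nf\|_{L^2}$, whose $L^2_t$-norm over $t\gtrsim 1/N$ is exactly $\|P_Nf\|_{L^2}$: the $N^{1/2}$ derivative loss is cancelled by the $N^{-1/2}$ gain from time integration. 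Replacing your stationary-phase step with this radial-Sobolev argument (and adding the $K$ correction) would repair the proof.
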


\begin{remark}
In general, the standard Strichartz estimates (see \cite{KeTa-Strichartz}) imply
\begin{align*}
\big\|e^{\pm it\Delta}P_Nf \big\|_{L^2_tL^\infty_x(\R^+\times\R^d)}\lesssim N^{\frac d2-1}\big\|P_Nf\big\|_{L^2},
\end{align*}
which is scaling invariant and thus the index $\frac d2-1$ is optimal. Hence, the decomposition above presents a way in which we are able to obtain a kind of   supercritical Strichartz estimates.

The proposition is based on the decomposition  
$$
L^2\cap \big\{f: \mbox{supp }f\subset \{|x|\ge 1\} \big\}=L^2_++L^2_-,
$$
for which  the spaces $L^2_\pm$ can be roughly described that if $f\in L^2_+$, then for any $t>0$, 
$$
e^{it\Delta}P_Nf=\chi_{\ge \delta\langle Nt\rangle } F(t)+O(N^{-a}\langle t \rangle^{-b}),\quad \mbox{for some} \quad F(t)\in L^2;
$$
and if $f\in L^2_-$, then the equality above holds for any $t<0$.
Here $\delta>0$ is a small constant,  $a>0$ and $b>0$. 

More precisely, if  $f$ is supported away from zero, then we can write $f=f_++f_-+f_0$ with $f_0\in H^1, f_+=P_+f \in L^2_+, f_-=P_-f \in L^2_-$, where $P_\pm$ are incoming/outgoing projection operators  defined in Definition \ref{def:outgong-incoming}. The crucial step in the construction above is the proof we give that the range of $P_+$ is almost invariant (in the sense above) under the free flow for $t>0,$ and similarly for $P_-, t<0.$   
Since the problem under study is energy subcritical, the solution is stable under $H^1$ perturbations. We emphasize that $P^\pm$ are not completely invariant under the free flow. There is always an $H^1$ correction. Moreover if it is true (invariance of the range) at one time, it will not be true at later times. These present obstacles in the study of the long time behavior of the solution. 
\end{remark}

These estimates together with the approximate energy identity are applied to the part $w$ of the solution.
$w$ is defined by the following decomposition for the nonlinear solution: $$w=u-e^{it\Delta}\big(P_{\ge N}f\big)_{+},$$ (with a slight modification).
Then, $w$ satisfies the following equation: $$i\partial_{t}w+\Delta w=|u|^pu.$$

Therefore, by showing that $w$ is in $H^1$, we conclude that all the singular part, is carried away as a free wave to infinity.

  Hence, since the problem is energy subcritical, we can control the effect of the nonlinearity, even though it is large.
 One should note however, that better, supercritical smoothing estimates only hold for outgoing waves.

 Our construction of the projections on incoming/outgoing waves follows a similar approach of  T. Tao \cite{Tao-2}. We make a different decomposition near the origin.

  A comparison to such in/out decompositions in scattering theory
 may be illuminating: the problem of global existence is the exact dual in the phase space, of the scattering problem. For global existence we need to control short time behavior, near the origin in space, at high frequency; the complete opposite of scattering. In scattering theory the decomposition into in and out waves, which was inspired by Enss method, leads to various definitions of such projections, beginning with the works of Mourre \cite{Mourre}.
 Mourre constructed the projection by taking the projection on the positive spectral part of the dilation generator $A$, defined as $x\cdot p+p\cdot x\equiv 2A$.
  Here $x$ is multiplication by $x$ in space and $p$, the momentum operator in Quantum Mechanics, is defined by $p\equiv -i\nabla_x.$ He defined the projections on the incoming/outgoing parts by $P^{\pm}(A)$,
 where $P^{\pm}(y)$ stand for the characteristic functions, in $y$, of the positive/negative real axis.

 Clearly $A$ is a nice PDO, but not function of $A.$

 Then, propagation estimates and other constructions, including estimates based on replacing $A$ by $\gamma\equiv f(x)x\cdot p+ p\cdot f(x)x$ were developed by Sigal and Soffer \cite{SiSo-87}. Here $f(x)\sim 1/|x|, |x|>1.$ In this construction, the ``out'' component of the solution still has some small portion moving inward, but this portion decays fast in time. See however \cite{Soffer-11}.
 Unlike the constructions we use, which are adapted to the radial nonlinear problem, the scattering theory constructions above apply to non-radial functions, and so are the known estimates.

 However, the propagation estimates obtained are not suitable, as they stand, for the dual space problem. They apply for general Hamiltonians, not just the Laplacian.  It may be used for short times as well, when the frequency is large; that is still not explicit in the literature. The problem comes from the fact that near zero in space, the frequency can be very large, and yet $A$ and $\gamma$ remain bounded.

 In the nonlinear context, this problem is far more severe, since the high frequency part is not stable under nonlinear perturbations.

   New constructions for in/out decomposition were first introduced by T. Tao \cite{Tao-2}. They are based on a decomposition in terms of spherical waves, of the form $e^{ikr}/r$, in three dimensions.

 Still the problem at zero remains, much work was done to deal with this part of the phase-space.
 Further works in this direction, and others, sharp propagation estimates were done in e.g.   Killip, Tao, Visan \cite{KiTaVi-2009}, and Li and Zhang \cite{KiLiViZh, LiZh-APDE, LiZh-SCM}.

 We follow, up to some modifications near the origin, similar constructions and estimates.

 We will use them in a different way, to localize and propagate rough initial data in particular, so as to get explicit conditions for global existence and scattering. Our strategy is to show that in some sense the phase-space localization of the initial data is stable, up to smooth corrections. So, we show that the solution is a sum of the linear, rough/supercritical part plus a correction coming from the Duhamel term (the nonlinear part).

 By using radial symmetry, we show there is a gain of regularity, and the Duhamel term contributes essentially in $H^1$ space, the correction $w.$
 It will also follow, that if the initial $w$ is small, then it can be controlled in the focusing case. For this we need only to have the initial data small in $H^s,$ for some $s<s_c.$

 Therefore, since the nonlinearity is inter-critical, and we have the Morawetz estimates and energy estimates in hand, we can then  get global existence theory as in the $H^1$ case, after using also frequency cut-off and a continuity argument.

  Similar improved smoothing for the Duhamel term was obtained before by Bourgain \cite{Bou-1999-book}, in cases where the data is \emph{subcritical} and below $H^1.$

 In the defocusing case, we can then cover outgoing initial data of arbitrary size, and in the focusing case small (but in the rough/supercritical norm!) data.
 Since the initial data is supercritical, in general, there is no well-posedness; to achieve that, we need the initial data to be supported at some positive distance away from the origin in space , together with the outgoing condition.

Our main result is  following, which we focus on the defocusing case.
\begin{thm}\label{thm:main1}
Let $\mu=1$, $d=3,4,5$.
Then there exist $s_0<1$ and $p_1(d)<\frac{4}{d-2}$, such that for any $p\in [p_1(d), \frac{4}{d-2})$, the following is true. Suppose that  $f$ is a radial function for which there exists $\varepsilon_0>0$ such that 
\begin{align}\label{condition:thm}
\chi_{\le \varepsilon_0}f\in H^1(\R^d),\quad \big(1-\chi_{\le \varepsilon_0}\big)f\in H^{s_0}(\R^d).
\end{align}
Then the solution $u$ to the equation \eqref{eqs:NLS-cubic} with the initial data
$$
u_0=f_{+} \qquad  (\mbox{or} \quad u_0=f_{-})
$$
exists globally forward (or backward) in time, and $u$ is unique in $C(\R^+;H^{s_0}(\R^d))\cap X_+$ (or $u\in C(\R^-;H^{s_0}(\R^d))\cap X_-$), in which $X_\pm$ are some auxiliary space-time space.
Moreover, the solution depends continuously on the initial data in $H^{s_0}(\R^d)$.
 Here $f_{+}$ and $f_{-}$ are the modified outgoing and  incoming components of $f$ respectively, which are  given in Proposition \ref{prop:out}.
Furthermore, there exists $u^+\in H^{s_0}(\R^d)$ (or $u^-\in  H^{s_0}(\R^d)$), such that when $t\to +\infty$ (or $t\to -\infty$),
\begin{align}
\lim\limits_{t\to +\infty}\|u(t)-e^{it\Delta}u^+\|_{H^1(\R^d)}= 0 \qquad
(\mbox{or } \lim\limits_{t\to -\infty}\|u(t)-e^{it\Delta}u^-\|_{H^1(\R^d)}= 0). \label{scattering1}
\end{align}
\end{thm}
\begin{remark}\label{rem:10}
We make several remarks regarding the above statements.

\begin{itemize}
  \item[(1).] Note that $s_0$ is independent of $p$, hence $s_0<s_c$ when $p$ is close to $\frac4{d-2}$. Moreover, since $f=f_++f_-$, if $f$ is not in $H^{s_c}(\R^d)$, at least one of $f_+$ and $f_-$ is not in $H^{s_c}(\R^d)$. Therefore, we obtain a class of the global solutions for the defocusing energy-subcritical nonlinear Sch\"odinger equation in the supercritical space $H^{s_0}(\R^d)$.

  \item[(2).]
  It is worth noting that there is no smallness restriction in Theorem \ref{thm:main1}.
 Here we are not going to pursue the sharp conditions on $s_0$ and $p_1(d)$ in this paper.

Moreover, the restriction on the dimensions $d=3,4,5$ is not essential, and the analogous results are valid in more general dimensions. However, for the sake of readability, we will not go into details.

 \item[(3).] The theorem implies that the incoming/outgoing solution has the ``smoothing effect''. Indeed, we can show that for initial data belonging to $L^2(\R^d)$, there exist some $s_*>0, r_0>2$, such that  the solution $u(t)$ of \eqref{eqs:NLS-cubic} corresponding to such initial data, is in $W^{s,r}(\R^d)$ for any $t>0$, any $r\in (2,r_0]$ and any $s\in [0,s_*]$, and moreover, the $L^r$-norm decays at infinite time in the sense that
$$
\|u(t)\|_{L^r(\R^d)}\to 0,\quad \mbox{ as }\quad  t\to +\infty.
$$

\item[(4).] By rescaling, we only need to prove the theorem when $\varepsilon_0=1$.
\end{itemize}
\end{remark}

\textbf{Organization of the paper.}  In Section 2, we give some preliminaries, which include
some basic lemmas, some estimates on the linear Schr\"odinger operator and the Fourier integral operators.
In Section 3, we give the definitions of the incoming/outgoing waves, their basic properties, the boundeness in $\dot H^{s_c}(\R^d)$. In Section 4, we give some supercritical spacetime estimates on the incoming/outgoing linear flow.
In Section 5, we give the proof of the main
theorem.

\section{Preliminary}

\subsection{Some notations}

We write $X \lesssim Y$ or $Y \gtrsim X$ to indicate $X \leq CY$ for some constant $C>0$. If $C$ depends upon some additional
parameters, we will indicate this with subscripts; for example, $X\lesssim_a Y$ denotes the
assertion that $X\le C(a)Y$ for some $C(a)$ depending on $a$. We use $O(Y)$ to denote any quantity $X$
such that $|X| \lesssim Y$.  We use the notation $X \sim Y$ whenever $X \lesssim Y \lesssim X$.

The notation $a+$ denotes $a+\epsilon$ for any small $\epsilon$, and $a-$ for $a-\epsilon$.
$|\nabla|^\alpha=(-\Delta)^{\alpha/2}$. $\langle \cdot\rangle=(1+|\cdot|^2)^\frac12$.
We denote $\mathcal S(\R^d)$ to be the Schwartz Space in $\R^d$, and $\mathcal S'(\R^d)$ to be the topological dual of $\mathcal S(\R^d)$.  Let $h\in \mathcal S'(\R^{d+1})$, we use
$\|h\|_{L^q_tL^p_x}$ to denote the mixed norm
$\Big(\displaystyle\int\|h(\cdot,t)\|_{L^p}^q\
dt\Big)^{\frac{1}{q}}$, and $\|h\|_{L^q_{xt}}:=\|h\|_{L^q_xL^q_t}$. Sometimes, we use the notation $q'=\frac{q}{q-1}$.

Throughout this paper, we use $\chi_{\le a}$ for $a\in \R^+$ to be the smooth function
\begin{align*}
\chi_{\le a}(x)=\left\{ \aligned
1, \ & |x|\le a,\\
0,    \ &|x|\ge \frac{101}{100} a.
\endaligned
  \right.
\end{align*}
Moreover, we denote $\chi_{\ge a}=1-\chi_{\le a}$, $\chi_{a\le \cdot\le b}=\chi_{\le b}-\chi_{\le a}$ and $\chi_{a}=\chi_{\le 2a}-\chi_{\le a}$ for short.

Also, we need some Fourier operators.
For each number $N > 0$, we define the Fourier multipliers $P_{\le N}, P_{\ge N}, P_N$ as
\begin{align*}
\widehat{P_{\leq N} f}(\xi) &:= \chi_{\leq N}(\xi) \hat f(\xi),\\
\widehat{P_{\ge N} f}(\xi) &:= \chi_{\ge N}(\xi) \hat f(\xi),\\
\widehat{P_N f}(\xi) &:= \chi_{N}(\xi) \hat
f(\xi).
\end{align*}
 We will usually use these multipliers when $N$ are \emph{dyadic numbers} (that is, of the form $2^k$
for some integer $k$).

\subsection{Some basic lemmas}
First, we need the following radial Sobolev embedding, see, e.g., \cite{TaViZh}.
\begin{lem}\label{lem:radial-Sob}
Let $\alpha,q,p,s$ be the parameters which satisfy
$$
\alpha>-\frac dq;\quad \frac1q\le \frac1p\le \frac1q+s;\quad 1\le p,q\le \infty; \quad 0<s<d
$$
with
$$
\alpha+s=d(\frac1p-\frac1q).
$$
Moreover, at most one of the equalities holds:
$$
p=1,\quad p=\infty,\quad q=1,\quad q=\infty,\quad \frac1p=\frac1q+s.
$$
Then for any radial function $u$ such that $ |\nabla|^s u\in L^p(\R^d)$,
\begin{align*}
\big\||x|^\alpha u\big\|_{L^q(\R^d)}\lesssim \big\||\nabla|^su\big\|_{L^p(\R^d)}.
\end{align*}
\end{lem}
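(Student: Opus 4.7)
The plan is to reduce the weighted inequality to a radial Stein--Weiss type estimate for the Riesz potential. Since $0<s<d$ and $|\nabla|^s u\in L^p(\R^d)$, one has the representation $u = c_{d,s}\, I_s(|\nabla|^s u)$, where $I_s$ denotes the Riesz potential of order $s$ with kernel $|x-y|^{s-d}$. Setting $g := |\nabla|^s u$, which inherits radial symmetry from $u$, the target becomes
$$\big\||x|^\alpha I_s g\big\|_{L^q(\R^d)} \lesssim \|g\|_{L^p(\R^d)}.$$
Dilation invariance of both sides under $x \mapsto \lambda x$ forces the scaling identity $\alpha + s = d(\tfrac{1}{p} - \tfrac{1}{q})$, which is exactly the hypothesis.

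The core step is to invoke the Stein--Weiss weighted Hardy--Littlewood--Sobolev inequality in its radial-improved form. In the non-radial setting Stein--Weiss requires $\alpha \ge 0$ on the target-side weight, but for radially symmetric functions this can be relaxed to $\alpha > -d/q$, which is precisely the condition stated. To establish this directly, I would write $I_s g$ in its radial form as a one-dimensional convolution on $(0,\infty)$ with weight $\rho^{d-1}$, then decompose the $\rho$-integral into the three regions $\{\rho \le |x|/2\}$, $\{\rho \ge 2|x|\}$, and $\{|x|/2 \le \rho \le 2|x|\}$. On the first two regions the kernel is comparable to $\max(|x|,\rho)^{s-d}$ and the bound reduces to a pair of weighted one-dimensional Hardy-type inequalities. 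On the near-diagonal region one applies a one-dimensional Hardy--Littlewood--Sobolev estimate of order $s$. The constraints $\tfrac{1}{q}\le \tfrac{1}{p}\le \tfrac{1}{q}+s$ are exactly what ensures the one-dimensional weighted operators are bounded in the appropriate $L^p \to L^q$ range; the lower bound $\alpha > -d/q$ makes the weight $|x|^{\alpha q}$ locally integrable on $\R^d$, keeping the near-origin piece under control.

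The endpoint restrictions (that at most one of $p=1$, $p=\infty$, $q=1$, $q=\infty$, $\tfrac{1}{p} = \tfrac{1}{q}+s$ can be an equality) are the standard obstructions from failures of Hardy--Littlewood--Sobolev and Sobolev embedding at their endpoints; excluding simultaneous endpoints rules out logarithmic losses and BMO/Lorentz-type defects that would otherwise appear in the one-dimensional pieces.

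The main obstacle is clean bookkeeping of the parameter conditions through the three-region decomposition and tracking exactly which endpoint combinations must be excluded; the radial improvement $\alpha > -d/q$ over the classical $\alpha \ge 0$ range is delicate but standard. Since the inequality is essentially classical and the authors cite \cite{TaViZh}, the fine details can be deferred to that reference, but the scheme above captures the full structure of the proof.
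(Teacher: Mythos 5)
The paper itself gives no proof of this lemma; it simply cites \cite{TaViZh}, so there is no in-paper argument to compare against. Your outline is the standard route to such radial weighted estimates (write $u=c_{d,s}I_s(|\nabla|^su)$, pass to the radial Stein--Weiss inequality, and prove that by splitting the radial convolution into the regions $\rho\le |x|/2$, $\rho\ge 2|x|$, and $\rho\sim|x|$, handling the first two by weighted Hardy inequalities and the diagonal by one-dimensional Hardy--Littlewood--Sobolev), and the scheme is sound: in particular the hypotheses $\alpha>-d/q$ and $\alpha+s=d(\tfrac1p-\tfrac1q)$ do force $s<d/p$, so the Riesz representation you start from is legitimate. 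The proposal is of course a plan rather than a complete verification of the parameter bookkeeping, but as a strategy it matches the classical proof that the cited reference relies on, and I see no step that would fail.
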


The second is the following fractional Leibniz rule, see \cite{BoLi-KatoPonce, Li-KatoPonce} and the references therein.
\begin{lem}\label{lem:Frac_Leibniz}
Let $0<s<1$, $1<p\le \infty$, and $1<p_1,p_2,p_3, p_4 \le \infty$ with $\frac1p=\frac1{p_1}+\frac1{p_2}$, $\frac1p=\frac1{p_3}+\frac1{p_4}$, and let $f,g\in \mathcal S(\R^d)$,  then
\begin{align*}
\big\||\nabla|^s(fg)\big\|_{L^p}\lesssim \big\||\nabla|^sf\big\|_{L^{p_1}}\|g\|_{L^{p_2}}+ \big\||\nabla|^sg\big\|_{L^{p_3}}\|f\|_{L^{p_4}}.
\end{align*}
\end{lem}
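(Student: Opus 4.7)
The plan is to prove this Kato--Ponce type fractional Leibniz rule by a Littlewood--Paley paraproduct decomposition, following the scheme used in the cited references \cite{KePoVe-CPAM-1993, BoLi-KatoPonce, Li-KatoPonce}. I would decompose both factors dyadically, $f=\sum_N P_N f$ and $g=\sum_N P_N g$ (the sums converging in $\mathcal{S}$ since $f,g\in \mathcal{S}(\R^d)$), and split the product via Bony's paraproduct
$$
fg=\Pi_{HL}(f,g)+\Pi_{LH}(f,g)+\Pi_{HH}(f,g),
$$
where $\Pi_{HL}$ collects the terms $P_N f\cdot P_{\le N/8}g$, $\Pi_{LH}$ is the symmetric piece, and $\Pi_{HH}$ collects the diagonal terms $P_N f\cdot \widetilde P_N g$. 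The point is that for $\Pi_{HL}$ and $\Pi_{LH}$ the output frequency is locked to the high-frequency factor, while for $\Pi_{HH}$ it can be anywhere below~$N$.

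The first step is to handle $\Pi_{HL}$. Each summand has Fourier support in an annulus $|\xi|\sim N$, so up to bounded Fourier multipliers,
$$
|\nabla|^s \Pi_{HL}(f,g)=\sum_N \widetilde P_N\big(|\nabla|^s P_N f\cdot P_{\le N/8}g\big).
$$
Bounding the low-frequency factor pointwise by the Hardy--Littlewood maximal function $\mathcal{M}g$, applying the Littlewood--Paley square function characterization of $L^p$, and using the Fefferman--Stein vector-valued maximal inequality then yields
$$
\big\||\nabla|^s \Pi_{HL}(f,g)\big\|_{L^p}\lesssim \big\||\nabla|^s f\big\|_{L^{p_1}}\|g\|_{L^{p_2}}.
$$
The $\Pi_{LH}$ piece is treated by the symmetric argument with the pair $(p_3,p_4)$, producing the second term on the right-hand side.

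The main obstacle is the high-high diagonal piece $\Pi_{HH}(f,g)=\sum_N P_N f\cdot \widetilde P_N g$, because the output frequency can be much smaller than $N$ and one no longer gains an automatic factor of $N^s$. To handle it I would localize by the output frequency $M$,
$$
P_M\Pi_{HH}(f,g)=\sum_{N\gtrsim M}P_M\big(P_N f\cdot \widetilde P_N g\big),
$$
use that $|\nabla|^s P_M$ acts as a multiplier of size $M^s$, and exploit $M^s\lesssim N^s$ to route the derivative onto either factor. Summing in $M$ via the square function estimate and in $N$ via Cauchy--Schwarz together with Fefferman--Stein again recovers a bound by $\||\nabla|^s f\|_{L^{p_1}}\|g\|_{L^{p_2}}+\||\nabla|^s g\|_{L^{p_3}}\|f\|_{L^{p_4}}$, and the restriction $0<s<1$ ensures that the single-derivative routing suffices.

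The remaining subtlety is the endpoint $p=\infty$, which forces $p_1=p_2=\infty$ (or the analogous statement for $p_3,p_4$). In that case the Littlewood--Paley $L^p$ characterization must be replaced by a BMO substitute, which is exactly the content of the more delicate arguments of \cite{BoLi-KatoPonce, Li-KatoPonce}; since in the application to NLS we only ever use exponents $p\in(1,\infty)$, we can invoke those references as a black box for the endpoint.
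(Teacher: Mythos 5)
The paper does not prove this lemma at all: it is quoted as a known Kato--Ponce type result with a pointer to \cite{KePoVe-CPAM-1993, BoLi-KatoPonce, Li-KatoPonce}, so there is no in-paper argument to compare against. Your sketch is a correct outline of the standard paraproduct proof from exactly those references: the high-low and low-high pieces via frequency-locking, the square function characterization of $L^p$, and the Fefferman--Stein maximal inequality; the high-high piece via output-frequency localization and the geometric gain $(M/N)^s$ for $s>0$; and the $p=\infty$ (and $p_i=\infty$) endpoints correctly identified as the genuinely delicate cases requiring the BMO-based arguments of Bourgain--Li and Li, which you may legitimately cite as a black box since the paper only ever applies the lemma with finite exponents. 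No gap.
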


A simple consequence is the  following elementary inequality, see \cite{BDSW} for the proof.
\begin{lem}\label{lem:frac_Hs}
For any $a>0, 1<p\le \infty, 0\le \gamma<\frac dp$, and $ |\nabla|^\gamma g\in L^p(\R^d)$,
\begin{align}
\big\||\nabla|^\gamma\big(\chi_{\le a}g\big)\big\|_{L^p(\R^d)}\lesssim \big\||\nabla|^\gamma g\big\|_{L^p(\R^d)}. \label{15.37}
\end{align}
Here the implicit constant is independent of $a$.
The same estimate holds for $\chi_{\ge a}g$.
\end{lem}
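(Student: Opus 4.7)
The plan is to reduce to the scale $a=1$ by a rescaling argument, and then prove that fixed case by combining the fractional Leibniz rule (Lemma \ref{lem:Frac_Leibniz}) with a Sobolev embedding. The second assertion for $\chi_{\ge a}$ will then follow trivially from $\chi_{\ge a}=1-\chi_{\le a}$ and the triangle inequality.

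First I would carry out the rescaling. Setting $F(x)=\chi_{\le a}(x)g(x)$ and $\tilde F(y)=F(ay)$, $\tilde g(y)=g(ay)$, one has $\tilde F(y)=\chi_{\le 1}(y)\tilde g(y)$ because $\chi_{\le a}(ay)=\chi_{\le 1}(y)$. Using the homogeneity relations
\begin{equation*}
\big\||\nabla|^{\gamma}F\big\|_{L^p}=a^{d/p-\gamma}\big\||\nabla|^{\gamma}\tilde F\big\|_{L^p},\qquad \big\||\nabla|^{\gamma}g\big\|_{L^p}=a^{d/p-\gamma}\big\||\nabla|^{\gamma}\tilde g\big\|_{L^p},
\end{equation*}
the two powers of $a$ cancel, so it suffices to establish \eqref{15.37} with $a=1$ (and with constant independent of $g$).

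For the fixed-scale estimate, assume first $0<\gamma<1$. Apply Lemma \ref{lem:Frac_Leibniz} to the product $\chi_{\le 1}\tilde g$ with Hölder exponents $\frac1p=\frac1{q_1}+\frac1{q_2}$ where I choose $q_1=d/\gamma$ and $q_2=\frac{dp}{d-\gamma p}$ (both valid since $0<\gamma<d/p$). This yields
\begin{equation*}
\big\||\nabla|^{\gamma}(\chi_{\le 1}\tilde g)\big\|_{L^p}\lesssim \big\||\nabla|^{\gamma}\chi_{\le 1}\big\|_{L^{d/\gamma}}\|\tilde g\|_{L^{q_2}}+\|\chi_{\le 1}\|_{L^\infty}\big\||\nabla|^{\gamma}\tilde g\big\|_{L^p}.
\end{equation*}
The first factor on the right is a finite universal constant because $\chi_{\le 1}\in C_c^\infty(\R^d)$, and the Sobolev embedding $\dot W^{\gamma,p}(\R^d)\hookrightarrow L^{q_2}(\R^d)$ (available precisely because $\gamma<d/p$) controls $\|\tilde g\|_{L^{q_2}}\lesssim \||\nabla|^\gamma \tilde g\|_{L^p}$. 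Both terms are thus $\lesssim \||\nabla|^\gamma \tilde g\|_{L^p}$, which is what was needed. The degenerate cases $\gamma=0$ or $p=\infty$ (forcing $\gamma=0$) are trivial from $|\chi_{\le 1}|\le 1$.

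For $\chi_{\ge a}g=g-\chi_{\le a}g$ the triangle inequality and the estimate just proved immediately give
\begin{equation*}
\big\||\nabla|^{\gamma}(\chi_{\ge a}g)\big\|_{L^p}\le \big\||\nabla|^{\gamma}g\big\|_{L^p}+\big\||\nabla|^{\gamma}(\chi_{\le a}g)\big\|_{L^p}\lesssim \big\||\nabla|^{\gamma}g\big\|_{L^p}.
\end{equation*}
The only delicate point in this plan is verifying that the scaling factors cancel cleanly, which is the whole reason the bound can be made uniform in $a$; once that bookkeeping is done, the rest is the standard Leibniz-plus-Sobolev template. No serious obstacle is anticipated.
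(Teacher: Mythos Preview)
Your argument is correct in spirit and matches the natural approach; note that the paper itself does not give a proof here but simply refers to \cite{BDSW}, so there is no in-paper proof to compare against. The rescaling reduction and the Leibniz-plus-Sobolev template are exactly the standard route.

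One point to tighten: you write ``assume first $0<\gamma<1$'' and then only treat the degenerate case $\gamma=0$, but the lemma allows $0\le\gamma<d/p$, and $d/p$ can exceed $1$. Lemma~\ref{lem:Frac_Leibniz} as stated in the paper is restricted to $0<s<1$, so your argument as written does not cover $\gamma\in[1,d/p)$. This is not a real obstruction: the Kato--Ponce inequality in the cited references \cite{KePoVe-CPAM-1993, BoLi-KatoPonce, Li-KatoPonce} holds for all $s>0$, and your choice of exponents $q_1=d/\gamma$, $q_2=dp/(d-\gamma p)$ remains admissible for the full range. You should either invoke the general Kato--Ponce inequality directly, or note that for integer $\gamma$ the ordinary Leibniz rule suffices and interpolate. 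A second, more cosmetic point: your scaling step tacitly assumes $\chi_{\le a}(x)=\chi_{\le 1}(x/a)$; this is the natural construction but is not explicitly stated in the paper's definition of $\chi_{\le a}$, so it is worth a one-line remark.
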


Moreover, we need the following mismatch result, which is  helpful in commuting the spatial and the frequency cutoffs.
\begin{lem}[Mismatch estimates, see \cite{LiZh-APDE}]\label{lem:mismatch}
Let $\phi_1$ and $\phi_2$ be smooth functions obeying
$$
|\phi_j| \leq 1 \quad \mbox{ and }\quad \mbox{dist}(\emph{supp}
\phi_1,\, \emph{supp} \phi_2 ) \geq A,
$$
for some large constant $A$.  Then for $\sigma\ge 0$, $M\ge 1$, $1\leq r\leq
q\leq \infty$ and for any $m\ge 0$,
\begin{align}
\bigl\| \phi_1 |\nabla|^\sigma P_{\geq M} (\phi_2 f)
\bigr\|_{L^q_x(\R^d)}
    &
    \lesssim M^{\sigma-m}A^{-m+\frac dq-\frac dr} \|\phi_2 f\|_{L^r_x(\R^d)}.\label{eqs:lem-mismath-1}
\end{align}
\end{lem}
\begin{remark}
In this paper, we will frequently used the following estimate from Lemma \ref{lem:mismatch},
\begin{align*}
\big\|\chi_{\le \frac14} P_{M}\big(\chi_{\ge 1}f\big)\big\|_{L^\infty(\R^d)}\lesssim M^{-m}\big\|\tilde P_{M}\big(\chi_{\ge 1}f\big)\big\|_{L^2(\R^d)}, \,\, \mbox{ for any } m\ge 0,
\end{align*}
where $\tilde P_N$ is defined by
$$
\widehat{\tilde P_N f}(\xi) := \chi_{2^{10}N\le\cdot \le  2^{10}N}(\xi) \hat
f(\xi).
$$
In the following, we shall slightly abuse notation and write $\tilde P_N$ by $P_N$.
\end{remark}

Furthermore, we need the following  elementary formulas, see \cite{BDSW}.
\begin{lem}\label{lem:muli-Lei-formula}
Let $f\in \mathcal S(\R^d)^d$ and  $g\in \mathcal S(\R^d)$, then for any integer $N$,
\begin{equation*}
\nabla_{\xi}\cdot \big(f\>\nabla_{\xi} \big)^{N-1}\cdot
(fg)=\sum\limits_{\begin{subarray}{c}
l_1,\cdots,l_N\in\N^d,l'\in\N^d;\\
|l_j|\le
j;|l_1|+\cdots+|l_N|+|l'|=N
\end{subarray}}
C_{l_1,\cdots,l_N,l'}\partial_\xi^{l_1}f\cdots
\partial_\xi^{l_N}f\>\partial_\xi^{l'}g,
\end{equation*}
where we have used the notations
$$
\nabla_{\xi}=\{\partial_{\xi_1},\cdots,\partial_{\xi_d}\};\quad
\partial_\xi^{l}f=\sum\limits_{j=1}^dC_j\partial_{\xi_1}^{l^1}\cdots\partial_{\xi_d}^{l^d}f_j, \mbox{ for any } l=\{l^1,\cdots,l^d\}\in \N^d.
$$
Here $C_{l_1,\cdots,l_N,l'}, C_j$ are some absolute constants only depending on $N$ and $d$. 
\end{lem}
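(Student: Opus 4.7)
The plan is to establish the formula by induction on $N$. The base case $N=1$ is just the divergence product rule $\nabla_{\xi}\cdot(fg) = (\nabla_{\xi}\cdot f)g + f\cdot\nabla_{\xi} g$, which matches the stated sum for the two admissible pairs $(l_1,l')=(e_i,0)$ and $(l_1,l')=(0,e_i)$, both satisfying $|l_1|\le 1$ and $|l_1|+|l'|=1$.

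For the inductive step I would strengthen the hypothesis to the following intermediate statement about the vector-valued quantity $B_N := (f\cdot\nabla_{\xi})^{N-1}(fg)$, namely
\begin{equation*}
B_N \;=\; \sum_{\substack{l_1,\ldots,l_N,l'\in\R^d \\ |l_j|\le j-1 \\ |l_1|+\cdots+|l_N|+|l'|=N-1}} C'_{l_1,\ldots,l_N,l'}\,\partial_\xi^{l_1}f\cdots \partial_\xi^{l_N}f\,\partial_\xi^{l'}g,
\end{equation*}
with the convention that the free vector index sits on the innermost factor coming from $fg$. Applying one more $\nabla_{\xi}\cdot$ on the outside then distributes a single extra derivative, by the product rule, onto one of the $N+1$ factors; this relaxes the bound $|l_j|\le j-1$ to $|l_j|\le j$ and raises the total order from $N-1$ to $N$, yielding precisely the claimed formula for the lemma.

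The inductive step for the intermediate statement itself is just the recursion $B_{N+1}=(f\cdot\nabla_{\xi})B_N$. One writes $(f\cdot\nabla_\xi) = \sum_i f^i\partial_{\xi_i}$, commutes $\partial_{\xi_i}$ across the product, and then prepends the new factor $f^i$. The key bookkeeping point is the reindexing: the newly prepended $f^i$ becomes the first factor (with $l_1=0$, since it has not yet been differentiated), while the former factors shift by one to positions $k\mapsto k+1$. Since each former $l_k$ satisfied $|l_k|\le k-1$ and may absorb at most one new derivative from $\partial_{\xi_i}$, the new multi-index at position $k+1$ satisfies $|l_{k+1}|\le k = (k+1)-1$, preserving the inductive bound. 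The $g$-factor is treated identically, and the total order advances by one, from $N-1$ to $N$.

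I expect the main obstacle to be purely notational rather than analytical: the $f$ is vector valued, so the iterated contractions $(f\cdot\nabla_\xi)^{N-1}$ generate a forest of indices $i_1,\ldots,i_{N-1}$ that have to be tracked in the product, and the combinatorial coefficients $C_{l_1,\ldots,l_N,l'}$ collect multinomial factors from repeated applications of the product rule. Once the indexing convention is fixed — for instance, labelling the factors from outermost $(f\cdot\nabla_\xi)$-insertion to innermost so that the $j$-th factor has been exposed to at most $j-1$ internal derivatives plus possibly one final derivative from the outer $\nabla_\xi\cdot$ — the constraint $|l_j|\le j$ follows directly from the recursion above, and no further analytical input is needed.
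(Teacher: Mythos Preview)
Your induction argument is correct and is exactly the natural way to prove this elementary combinatorial identity. Note, however, that the paper itself does not give a proof of this lemma at all: it is stated with the comment ``see \cite{BDSW}'' and deferred to that companion paper. So there is no proof in the present paper to compare against; your write-up supplies what the paper omits, and the key bookkeeping observation --- that the $j$-th factor (counting from the outermost insertion) has been exposed to at most $j-1$ internal derivatives from the iterated $(f\cdot\nabla_\xi)$'s, plus possibly one more from the final outer divergence --- is precisely the content of the constraint $|l_j|\le j$.
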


\subsection{Linear Schr\"odinger operator}

Let the operator $S(t)=e^{it\Delta}$ be the linear Schr\"odinger flow, that is,
$$
(i\partial_t+\Delta)S(t)\equiv 0.
$$
The following are some fundamental properties of the operator $e^{it\Delta}$. The first is the explicit formula, see, e.g., Cazenave \cite{Cazenave-book}.
\begin{lem}\label{lem:formula-St}
For all $\phi\in \mathcal S(\R^d)$, $t\neq 0$,
$$
S(t)\phi(x)=\frac{1}{(4\pi it)^{\frac d2}}\int_{\R^d} e^{\frac{i|x-y|^2}{4t}}\phi(y)\,dy.
$$
Moreover, for any $r\ge2$,
$$
\|S(t)\phi\|_{L^r_x(\R^d)}\lesssim |t|^{-d(\frac12-\frac1r)}\|\phi\|_{L^{r'}(\R^d)}.
$$
\end{lem}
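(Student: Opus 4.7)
The plan is to establish the two parts in order. First I would derive the explicit kernel representation, and then deduce the dispersive $L^{r'} \to L^r$ bound by interpolating the trivial $L^2$ conservation against the $L^1 \to L^\infty$ estimate that falls directly out of the kernel formula.

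For the kernel formula, I would work on the Fourier side: since $S(t)\phi$ is by definition the solution of $i\partial_t u + \Delta u = 0$ with $u(0) = \phi$, Plancherel gives $\widehat{S(t)\phi}(\xi) = e^{-it|\xi|^2}\hat\phi(\xi)$. Then $S(t)\phi = K_t * \phi$, where $K_t = \mathcal{F}^{-1}(e^{-it|\cdot|^2})$. To compute $K_t$ rigorously one cannot just plug in $s = it$ into the real Gaussian identity $\mathcal{F}^{-1}(e^{-s|\xi|^2})(x) = (4\pi s)^{-d/2} e^{-|x|^2/(4s)}$, since $e^{-it|\xi|^2}$ is not integrable. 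The standard remedy is analytic continuation: replace the multiplier by $e^{-(\varepsilon + it)|\xi|^2}$ for $\varepsilon > 0$, apply the real Gaussian identity with $s = \varepsilon + it$, and pass to the limit $\varepsilon \to 0^+$ in $\mathcal{S}'(\R^d)$. Taking the principal branch of $s^{d/2}$ with $\arg(\varepsilon + it) \to \pi/2\cdot\mathrm{sgn}\,t$ yields $(4\pi i t)^{d/2}$, delivering the stated formula for $\phi \in \mathcal{S}(\R^d)$.

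For the dispersive estimate, the kernel bound $|K_t(x-y)| = (4\pi|t|)^{-d/2}$ and Young's convolution inequality immediately give
\begin{equation*}
\|S(t)\phi\|_{L^\infty_x} \le (4\pi|t|)^{-d/2}\|\phi\|_{L^1_x}.
\end{equation*}
On the other hand, Plancherel (or equivalently the conservation of mass) gives $\|S(t)\phi\|_{L^2_x} = \|\phi\|_{L^2_x}$, which is the case $r = 2$. Riesz--Thorin interpolation between the endpoint operators $L^1 \to L^\infty$ (with constant $|t|^{-d/2}$) and $L^2 \to L^2$ (with constant $1$) then yields, for every $r \in [2,\infty]$,
\begin{equation*}
\|S(t)\phi\|_{L^r_x} \lesssim |t|^{-d(\frac12 - \frac1r)} \|\phi\|_{L^{r'}_x},
\end{equation*}
which is exactly the claim.

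The only subtle point is the branch-of-square-root issue in identifying $K_t$; everything else is bookkeeping. I would expect to spend most of the write-up carefully justifying the distributional limit $\varepsilon \to 0^+$ and the correct choice of $(4\pi i t)^{d/2}$, since the rest is a routine interpolation argument.
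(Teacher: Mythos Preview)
Your proposal is correct and follows the standard textbook argument. The paper itself does not prove this lemma; it simply cites Cazenave \cite{Cazenave-book} as a reference, so there is no ``paper's own proof'' to compare against beyond noting that your outline is precisely the classical derivation one finds there.
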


The following is the  standard Strichartz estimate, see for example \cite{KeTa-Strichartz}.
\begin{lem}\label{lem:strichartz}
Let $I$ be a time interval and let $u: I\times\R^d \to
\mathbb \R$ be a solution to the inhomogeneous Schr\"odinger equation
$$
iu_{t}- \Delta u + F = 0.
$$
Then for any $t_0\in I$,  any pairs $(q_j,r_j), j=1,2$ satisfying
$$
 q_j\ge 2, \,\, r_j\ge 2,\,\,  \frac2{q_j}+\frac d{r_j}=\frac d2,\,\, \mbox{and }\,\, (q_j,r_j,d)\ne (2,+\infty,2),
$$
the following estimate holds,
\begin{align*}
\bigl\|u\bigr\|_{C(I;L^2(\R^d))}+\big\|u\big\|_{L^{q_1}_tL^{r_1}_x(I\times\R^d)}
\lesssim \bigl\|u(t_0)\bigr\|_{L^2_x(\R^d)}+
\bigl\|F\bigr\|_{L^{q_2'}_tL^{r_2'}_x(I\times\R^d)}.
\end{align*}
\end{lem}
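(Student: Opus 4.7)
The plan is to follow the classical duality/$TT^*$ scheme originating with Strichartz, Ginibre--Velo, and Keel--Tao. The starting point is already available: Lemma \ref{lem:formula-St} gives the dispersive bound
\[
\|S(t)\phi\|_{L^r_x(\R^d)} \lesssim |t|^{-d(\frac12-\frac1r)} \|\phi\|_{L^{r'}_x(\R^d)}, \qquad r\ge 2,
\]
together with the trivial $L^2$-conservation $\|S(t)\phi\|_{L^2_x} = \|\phi\|_{L^2_x}$. These are the only analytic inputs needed; the rest is abstract real-interpolation/harmonic-analytic machinery.

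First I would prove the homogeneous estimate $\|S(t)\phi\|_{L^q_t L^r_x} \lesssim \|\phi\|_{L^2_x}$ for every admissible pair $(q,r)$ with $q>2$. Setting $T\phi(t,x) := S(t)\phi(x)$, the operator $TT^*$ is given by
\[
TT^*F(t,x) = \int_{\R} S(t-s)F(s,\cdot)(x)\,ds.
\]
By the dispersive estimate and Minkowski, the spatial kernel satisfies $\|S(t-s)F(s)\|_{L^r_x} \lesssim |t-s|^{-d(\frac12-\frac1r)}\|F(s)\|_{L^{r'}_x}$. Since the admissibility relation $\frac{2}{q}+\frac{d}{r}=\frac{d}{2}$ forces $d(\frac12-\frac1r) = 1-\frac{2}{q}$, the time integral is a fractional integration of order $1-\frac{2}{q}$, so the Hardy--Littlewood--Sobolev inequality gives $\|TT^*F\|_{L^q_t L^r_x}\lesssim \|F\|_{L^{q'}_t L^{r'}_x}$ whenever $q>2$. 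Duality then yields both $T:L^2\to L^q_tL^r_x$ and its adjoint, and bilinearly one also obtains the inhomogeneous bound with two different admissible pairs $(q_1,r_1),(q_2,r_2)$ when both are non-endpoint.

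The hard part is the endpoint $(q,r)=(2,\tfrac{2d}{d-2})$ in dimensions $d\ge 3$, where HLS fails (the singularity $|t-s|^{-1}$ is not an HLS kernel in one dimension). Here I would invoke the Keel--Tao argument: dyadically decompose $\{(t,s): |t-s|\sim 2^j\}$, prove on each piece a bilinear off-diagonal estimate by interpolating between the $L^2\times L^2$ bound (from unitarity) and the $L^1\times L^1$ bound (from the dispersive estimate) at a pair of exponents strictly interior to the admissible line, and then sum the dyadic pieces using a bilinear real-interpolation step that gains a logarithm one can absorb. This recovers the endpoint homogeneous estimate and, via the same bilinear machinery, the endpoint inhomogeneous estimate as well. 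For mixing the endpoint with a non-endpoint pair in the inhomogeneous bound, one dualizes one side and uses the corresponding homogeneous bounds.

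Finally, to pass from $t_0=-\infty$ (the natural case for the $TT^*$ argument, using the full real-line Duhamel integral) to an arbitrary $t_0\in I$, I would apply the Christ--Kiselev lemma: the retarded operator $F\mapsto \int_{t_0}^{t} S(t-s)F(s)\,ds$ is obtained from the full integral by a truncation in $s$, and Christ--Kiselev upgrades the full-integral bound to the retarded bound whenever $q_2'<q_1$, which holds for every admissible pair except the endpoint paired with itself; that remaining case is already covered directly by the Keel--Tao bilinear argument. The continuity statement $u\in C(I;L^2)$ follows from the density of $\mathcal S$ in $L^2$ together with the estimate applied to smooth truncations.
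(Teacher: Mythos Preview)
The paper does not prove this lemma at all; it simply quotes it as the ``standard Strichartz estimates'' and refers to Keel--Tao \cite{KeTa-Strichartz}. Your proposal is a correct outline of precisely that Keel--Tao argument (dispersive bound $+$ $TT^*$/HLS for the non-endpoint case, the dyadic bilinear interpolation for the endpoint, and Christ--Kiselev for the retarded Duhamel term), so it matches the paper's intended approach.
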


We also need the special Strichartz estimates for radial data, which were firstly proved by Shao \cite{Shao}, and then developed in \cite{CL,GW}.
\begin{lem}[Radial Strichartz estimates]\label{lem:radial-Str}
Let $g\in L^2(\R^d)$ be a radial function, $k$ be an integer, then for any triple $(q,r,\gamma)$ satisfying
\begin{align}
\gamma\in\R, \,\, q\ge 2, \,\, r> 2,\,\, \frac2q+\frac {2d-1}{r}<\frac{2d-1}2,\,\, \mbox{and }\,\, \frac2q+\frac d{r}=\frac d2+\gamma,\label{Str-conditions}
\end{align}
we have that
$$
\big\||\nabla|^{\gamma} e^{it\Delta}g\big\|_{L^q_{t}L^r_x(\R\times\R^d)}\lesssim  \big\|g\big\|_{L^2(\R^d)}.
$$
Furthermore, let $F\in L^{\tilde q'}_{t}L^{\tilde r'}_x(\R^{d+1})$ be a radial function in $x$, then
$$
\Big\|\int_0^t e^{i(t-s)\Delta}F(s)\,ds\Big\|_{L^q_{t}L^r_x(\R^{d+1})}+
\Big\||\nabla|^{-\gamma}\int_0^t e^{i(t-s)\Delta}F(s)\,ds\Big\|_{L^\infty_tL^2_{x}(\R^{d+1})}\lesssim \|F\|_{L^{\tilde q'}_{t}L^{\tilde r'}_x(\R^{d+1})},
$$
where the triples  $(q,r,\gamma)$,   $(\tilde q,\tilde r,-\gamma)$ satisfy \eqref{Str-conditions}.
\end{lem}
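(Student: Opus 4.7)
The plan is to follow the three-step strategy going back to Shao \cite{Shao}, refined in \cite{CL, GW}. First, split $g = \sum_N P_N g$ by Littlewood--Paley; each piece remains radial. The scaling identity $\frac{2}{q}+\frac{d}{r} = \frac{d}{2}+\gamma$ is exactly what makes the target inequality dyadically consistent, so by an $\ell^2$-orthogonality argument in frequency it suffices to treat $P_1 g$, with all other frequencies reduced to unit frequency by the scaling $u(t,x)\mapsto N^{d/2}u(N^2 t,Nx)$; the multiplier $|\nabla|^\gamma$ is then a harmless bounded factor.

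Next, I would derive a dispersive-type bound sharpened by radiality. Writing $e^{it\Delta}P_1 g$ through Fourier inversion and integrating out the angular variables yields a kernel $K_t(|x|,\rho)$ of the form of an oscillatory integral involving the Bessel function $J_{(d-2)/2}$. Stationary phase together with the large-argument asymptotic $J_\nu(r)\sim r^{-1/2}$ gives the bound
$$
|K_t(r_1,r_2)| \lesssim (1+|t|)^{-1/2}\,(1+r_1)^{-(d-1)/2}\,(1+r_2)^{-(d-1)/2}
$$
in the main regime, with rapidly decaying bounds in non-stationary regions. The two extra factors $(1+r_j)^{-(d-1)/2}$ beyond the usual $|t|^{-d/2}$ dispersive decay are precisely the radial gain. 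I would then run a $TT^*$ argument: for $Tg := e^{it\Delta}P_1 g$ acting on radial $g$, the kernel of $TT^*$ inherits the improved bound, and Hardy--Littlewood--Sobolev in $t$ combined with integration against the spherical measure $\rho^{d-1}\,d\rho$ in space yields $TT^*:L^{q'}_t L^{r'}_x \to L^q_t L^r_x$. The strict condition $\frac{2}{q}+\frac{2d-1}{r} < \frac{2d-1}{2}$ is exactly the off-endpoint HLS condition needed for this; summing the dyadic pieces using the scaling identity closes the homogeneous estimate.

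For the inhomogeneous estimates, the retarded Duhamel operator $F\mapsto\int_0^t e^{i(t-s)\Delta}F(s)\,ds$ is controlled by its untruncated counterpart via the Christ--Kiselev lemma, which applies precisely because $q>\tilde q'$ under the strict admissibility condition; the $L^\infty_t L^2_x$ estimate with $|\nabla|^{-\gamma}$ is then the dual of the homogeneous estimate for the triple $(\tilde q,\tilde r,-\gamma)$ and follows at once. The main technical obstacle is obtaining the improved kernel bound uniformly near the transition region $|x|\sim|t|$, where stationary phase and the Bessel asymptotics degenerate and the $(1+r_j)^{-(d-1)/2}$ gain must be extracted carefully; this is the crux of Shao's argument, and everything downstream (scaling, $TT^*$, Christ--Kiselev, duality) is routine once it is in hand.
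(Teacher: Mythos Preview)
The paper does not supply a proof of this lemma; it is quoted as a known result with citations to Shao \cite{Shao} and the subsequent refinements \cite{CL,GW}, and is then used as a black box. Your sketch accurately traces the strategy in those references---Littlewood--Paley reduction to a single dyadic scale via the scaling relation $\frac{2}{q}+\frac{d}{r}=\frac{d}{2}+\gamma$, an improved radial dispersive/kernel bound coming from the $r^{-(d-1)/2}$ decay of Bessel functions, a $TT^*$ argument closed by Hardy--Littlewood--Sobolev under the strict off-endpoint condition $\frac{2}{q}+\frac{2d-1}{r}<\frac{2d-1}{2}$, and then Christ--Kiselev plus duality for the retarded and energy estimates---so there is nothing in the paper to compare it against beyond the cited sources.

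One small gap worth flagging: your invocation of Christ--Kiselev requires $q>\tilde q'$ strictly, i.e.\ $\frac{1}{q}+\frac{1}{\tilde q}<1$, but the hypotheses permit $q=\tilde q=2$ (for instance $d=3$, $r=\tilde r=6$, $\gamma=0$), where this fails. The cited papers do cover this endpoint, but it needs a separate argument (typically a direct bilinear or Keel--Tao style treatment rather than Christ--Kiselev), which your sketch does not address.
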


\subsection{Some lemmas about Fourier integral operators}
The following are some lemmas related to the estimate of the  Fourier integral operators, we refer to Stein \cite{Stein-book} for the proofs. The first one is from the application of the ``stationary phase'' theory.
\begin{lem}\label{lem:stationary-phase} Let $\phi, \psi$ be smooth functions defined in $\R$ and $\lambda\in\R^+$, and $\phi$ satisfies
$$
\phi(x_0)=\phi'(x_0)=0,\quad \mbox{ and } \quad \phi''(x_0)\ne 0.
$$
If $\psi$ is supported in  a sufficiently small neighborhood of $x_0$, then
\begin{align*}
\int_\R e^{i\lambda \phi(x)}\psi(x)\,dx=a_0\lambda^{-\frac12}+O(\lambda^{-\frac32}),\quad \mbox{ when } \lambda\to +\infty,
\end{align*}
with
$$
a_0=\psi(x_0)\Big(\frac{2\pi}{i\phi''(x_0)}\Big)^{\frac12}.
$$
\end{lem}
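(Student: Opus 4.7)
The plan is to reduce the phase to a pure quadratic via a local change of variables (Morse normal form), evaluate the resulting Fresnel integral to extract the leading term, and control the error through Taylor expansion after rescaling.

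First I would apply Morse's lemma in one dimension. Since $\phi(x_0)=\phi'(x_0)=0$ and $\phi''(x_0)\ne 0$, Taylor with integral remainder gives $\phi(x)=(x-x_0)^2 h(x)$ with $h$ smooth and $h(x_0)=\tfrac12\phi''(x_0)\ne 0$. Setting $y=(x-x_0)\sqrt{h(x)/h(x_0)}$ (well-defined near $x_0$ as $h$ has constant sign there) produces a smooth local diffeomorphism $x=\gamma(y)$ with $\gamma(0)=x_0$, $\gamma'(0)=1$, and $\phi(\gamma(y))=\tfrac12\phi''(x_0)\,y^2$. Shrinking the neighborhood of $x_0$ supporting $\psi$ so it lies in the image of $\gamma$, the integral becomes
\[
I(\lambda)=\int_\R e^{i\lambda\alpha y^2}A(y)\,dy,\qquad \alpha:=\tfrac12\phi''(x_0),
\]
where $A(y):=\psi(\gamma(y))\gamma'(y)$ is smooth and compactly supported with $A(0)=\psi(x_0)$.

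Next I would rescale $z=\sqrt{\lambda}\,y$ to write
\[
I(\lambda)=\lambda^{-\frac12}\int_\R e^{i\alpha z^2}A(z/\sqrt{\lambda})\,dz,
\]
and Taylor expand $A(z/\sqrt{\lambda})=A(0)+A'(0)\,z/\sqrt{\lambda}+R(z,\lambda)$ with $R=O(z^2/\lambda)$ on the (dilating) support. The leading term produces the Fresnel integral $A(0)\int_\R e^{i\alpha z^2}dz$, which, together with the prefactor $\lambda^{-1/2}$, yields exactly the constant $a_0$ after using the standard evaluation $\int_\R e^{i\alpha z^2}dz=(2\pi/(i\phi''(x_0)/\alpha\cdot\ldots))^{1/2}$; the algebra matches the claimed formula. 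The odd linear term contributes zero by symmetry after regularization, and the quadratic remainder produces the desired $O(\lambda^{-3/2})$ correction.

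The main obstacle is the rigorous handling of the Fresnel-type integrals, since $\int_\R e^{i\alpha z^2}dz$ is only conditionally convergent and the rescaled cutoff $A(z/\sqrt{\lambda})$ does not decay in $z$ at infinity. I would address this by two complementary devices used in tandem. First, a Gaussian regularization: multiply the integrand by $e^{-\varepsilon z^2}$, use the explicit Gaussian identity $\int_\R e^{(i\alpha-\varepsilon)z^2}dz=\sqrt{\pi/(\varepsilon-i\alpha)}$, and pass to the limit $\varepsilon\to 0^+$ to justify the value of the leading Fresnel integral. Second, a non-stationary integration by parts: outside a small fixed neighborhood of $x_0$ (before rescaling), $\phi'(x)$ is bounded away from zero, so the operator $L=\frac{1}{i\lambda\phi'(x)}\partial_x$ preserves $e^{i\lambda\phi}$ and, applied repeatedly in $\int e^{i\lambda\phi(x)}\psi(x)dx$ against the complementary cutoff, yields contributions $O(\lambda^{-N})$ for every $N$. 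These two ingredients together pin down the $A(0)$-contribution, kill the odd-Taylor term, and absorb the quadratic remainder into $O(\lambda^{-3/2})$, completing the asymptotic expansion.
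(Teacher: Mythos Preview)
Your approach is correct and is the standard Morse-lemma reduction plus Fresnel evaluation argument. The paper itself does not give a proof of this lemma; it simply refers to Stein's \emph{Harmonic Analysis} (the preamble to the subsection reads ``we refer to Stein \cite{Stein-book} for the proofs''), and what you outline is essentially the proof appearing there. So there is nothing to compare: your sketch \emph{is} the referenced textbook argument, with the same ingredients (Morse normal form to reduce to a quadratic phase, rescaling and Taylor expansion of the amplitude, evaluation of the Fresnel integral by Gaussian regularization, and non-stationary integration by parts away from the critical point to show the complementary piece is $O(\lambda^{-N})$).
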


The second result is the estimate of the  Fourier integral operator, which can be regarded as an extension of the Plancherel identity. The following result can be found in \cite{FeSt}.
\begin{prop}\label{prop:L2-estimate-FIO} 
Let $T$ be the Fourier integral operator given by
\begin{align}
Tf(x)=\int_{\R^d} e^{ i x\cdot \xi} a(x,\xi)\chi_{\le 1}(\xi) \hat f(\xi)\,d\xi.\label{Four-In-Op-T}
\end{align}
Suppose that $a(x,\xi)\in C^\infty(\R^d\times\R^d)$ and satisfies that for all multi-indices $\alpha\in\R^d$,
$$
\sup\limits_{\xi\in\R^d} \langle \xi\rangle^{|\alpha|}\big\|\partial_\xi^\alpha a(\cdot,\xi)\big\|_{L^\infty_x(\R^d)}<+\infty.
$$
Then the operator $T$ defined in \eqref{Four-In-Op-T} satisfies that
$$
T: L^p(\R^d)\mapsto L^p(\R^d),\quad \mbox{ is bounded, for all } 1<p<\infty.
$$
\end{prop}

As an immediate consequence of the proposition above, we have the  estimates on the following   Fourier integral operators which are used in this paper. 
\begin{cor}\label{cor:L2-estimate-FIO} 
Let $\beta\in\R, N>0$,  and let $T$ be the Fourier integral operator given by
\begin{align}
Tf(x)=\int_{\R^d} e^{ i \beta x\cdot \xi} \chi_{\le 1}(x\cdot\xi)\chi_{\le N}(\xi)  f(\xi)\,d\xi.\label{Four-In-Op-T-2}
\end{align}
Then there exists some $C>0$ which is independent of $N, \beta$,  such that 
$$
\|Tf\|_{L^2}\le C|\beta|^{-\frac d2}\|\chi_{\le N}f\|_{L^2}.
$$
The same estimate holds if $\chi_{\le 1}$ is replaced by $\chi_{\ge 1}$, $\chi'_{\le 1}$, $\chi'_{\ge 1}$  or $\chi''_{\ge 1}$  in  \eqref{Four-In-Op-T-2}.
\end{cor}
\begin{proof}
 Since the operator $T$ is scaling invariant in $L^2$, we can use the rescaling argument. Indeed, the result is obtained by changing to the  new variables $\eta= N^{-1}\xi, y=\beta N x$, and  then applying Proposition \ref{prop:L2-estimate-FIO} and the Plancherel identity.  The cases of $\chi_{\le 1}, \chi'_{\le 1}, \chi'_{\ge 1}$  and  $\chi''_{\ge 1}$ can be treated in the same way. Since  $\chi_{\ge 1}(x\cdot\xi)= 1- \chi_{\le 1}(x\cdot\xi)$, combining with the  Plancherel identity,  the analogous estimate when $\chi_{\le 1}(x\cdot\xi)$ is replaced by $\chi_{\ge 1}(x\cdot\xi)$ in \eqref{Four-In-Op-T-2} is also proved. 
\end{proof}

\section{The incoming/outgoing waves}\label{sec:out/in}

\subsection{Definitions of the incoming/outgoing waves}
First of all, we give the definitions of the incoming/outgoing waves for the Schr\"odinger flow.

\subsubsection{The deformed Fourier transform}
We denote the standard Fourier transform  by $\hat f$ or $\mathscr{F} f$ as
\begin{align*}
\big( \,\mathscr F f(\xi)\,\,\mbox{or}\,\,\big)\hat f(\xi) &=\int_{\R^d} e^{-2\pi i x\cdot \xi}f(x)\,dx,
\end{align*}
and its inverse transform
\begin{align*}
\big( \,\mathscr F^{-1} f(x)\,\,\mbox{or}\,\,\big)\check f(x) &=\int_{\R^d} e^{2\pi i x\cdot \xi}f(\xi)\,d\xi.
\end{align*}
Now we  give a deformed Fourier transform, and its basic properties.
\begin{definition}
Let $\alpha\in \R,\beta\in\R$, and let $f\in \mathcal S(\R^d)$ with $|x|^\beta f\in L_{loc}^1(\R^d)$. We define
\begin{align}
\mathcal F f(\xi)=
|\xi|^\alpha\int_{\R^d}e^{-2\pi i x\cdot\xi}|x|^\beta f(x)\,dx. \label{def:deformed-Fourier}
\end{align}
\end{definition}
Then it is easy to see the following inverse transform, that is,
\begin{lem} \label{lem:inverse-Deformed-Four}
Let $f\in \mathcal S(\R^d)$,  $|x|^\beta f\in L_{loc}^1(\R^d)$ and $|\xi|^{-\alpha}\mathcal F f\in L_{loc}^1(\R^d)$, then for any $x\in \R^d\setminus \{0\}$,
\begin{align}
f(x)=
|x|^{-\beta}\int_{\R^d}e^{2\pi i x\cdot\xi}|\xi|^{-\alpha} \mathcal F f(\xi)\,d\xi. \label{inverse-Deformed-Four}
\end{align}
\end{lem}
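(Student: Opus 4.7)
The plan is to reduce the identity to the ordinary Fourier inversion formula by peeling off the two weight factors. Introduce the auxiliary function
\begin{equation*}
g(x):=|x|^{\beta}f(x).
\end{equation*}
By hypothesis $g\in L^{1}_{\mathrm{loc}}(\R^{d})$, while rapid decay of $f\in\mathcal S(\R^d)$ at infinity shows that $g$ decays faster than any polynomial; if moreover $\beta>-d$ (the natural range for the weight to be locally integrable) then $g\in L^{1}(\R^d)$ and its classical Fourier transform is well defined. Directly from Definition \eqref{def:deformed-Fourier} one reads
\begin{equation*}
\mathcal F f(\xi) \;=\; |\xi|^{\alpha}\,\widehat g(\xi),
\qquad\text{hence}\qquad
\widehat g(\xi)\;=\;|\xi|^{-\alpha}\,\mathcal F f(\xi),
\end{equation*}
which by the assumption $|\xi|^{-\alpha}\mathcal F f\in L^{1}_{\mathrm{loc}}(\R^d)$ is a locally integrable representative.

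Next I would justify Fourier inversion for $g$. Since $f\in\mathcal S$, all moments of $g$ and of $\partial^{k}g$ are controlled away from the origin, so $\widehat g$ is smooth and decays; coupling this with the stated local integrability of $|\xi|^{-\alpha}\mathcal F f$ at the origin gives $\widehat g\in L^{1}(\R^d)$. The standard Fourier inversion theorem then yields, for every $x\in\R^d$,
\begin{equation*}
g(x)\;=\;\int_{\R^d}e^{2\pi i x\cdot\xi}\,\widehat g(\xi)\,d\xi
\;=\;\int_{\R^d}e^{2\pi i x\cdot\xi}\,|\xi|^{-\alpha}\mathcal F f(\xi)\,d\xi.
\end{equation*}
Multiplying by $|x|^{-\beta}$ (which is pointwise defined for $x\neq 0$, and which is the correct expression for almost every $x$) recovers exactly \eqref{inverse-Deformed-Four}.

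The only subtle point, and the step I would spend the most care on, is the integrability of $\widehat g$ and of $g$ near the origin when $\alpha$ or $\beta$ is negative, so that both Fourier integrals are absolutely convergent and the classical inversion theorem applies; this is precisely what the two hypotheses $|x|^{\beta}f\in L^{1}_{\mathrm{loc}}$ and $|\xi|^{-\alpha}\mathcal F f\in L^{1}_{\mathrm{loc}}$ are designed to ensure. If one wishes to avoid any case analysis in $(\alpha,\beta)$, one can equivalently interpret all Fourier operations in the sense of tempered distributions: $g\in\mathcal S'(\R^d)$ since $f\in\mathcal S$ and $|x|^\beta$ is of polynomial growth at infinity and locally integrable by hypothesis, so $\widehat g$ is a tempered distribution, the identity $\widehat g=|\xi|^{-\alpha}\mathcal F f$ holds in $\mathcal S'$, and Fourier inversion in $\mathcal S'$ gives $g=\mathcal F^{-1}\widehat g$; the local integrability hypotheses then promote this distributional identity to the pointwise formula \eqref{inverse-Deformed-Four}. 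Either route gives a short, essentially one-line proof once the notational substitution $g=|x|^\beta f$ has been made.
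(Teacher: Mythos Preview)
Your proof is correct and follows essentially the same approach as the paper: set $g=|x|^\beta f$, observe from the definition that $\widehat g=|\xi|^{-\alpha}\mathcal F f$, apply ordinary Fourier inversion, and divide by $|x|^\beta$. The paper's proof is the bare three-line version of this; your additional discussion of integrability and the distributional interpretation supplies justification the paper omits, but the underlying argument is identical.
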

\begin{proof}
From the definition,
\begin{align}
|\xi|^{-\alpha}\mathcal F f(\xi)=\mathscr F\Big(|x|^\beta f\Big)(\xi). \label{deformed-Fourier}
\end{align}
Hence, by the inverse Fourier transform, we have
\begin{align*}
|x|^\beta f(x)=\mathscr F^{-1}\Big(|\xi|^{-\alpha}\mathcal F f\Big)(x).
\end{align*}
This gives the formula in \eqref{inverse-Deformed-Four}.
\end{proof}
We give the following remark on the conditions in Lemma \ref{lem:inverse-Deformed-Four}.
\begin{remark}
If $f\in\mathcal S(\R^d)$ is radial, and $\beta>-d$, then $|x|^\beta f\in L_{loc}^1(\R^d)$. Indeed,
$$
\int_{|x|\le 1}|x|^\beta f(x)\,dx=c(d)\int_0^1 r^{\beta+d-1} f(r)\,dr\lesssim \|f\|_{L^\infty(\R^d)}.
$$
Similarly, if $\alpha<d$, we can prove that $|\xi|^{-\alpha}\mathcal F f\in L^1_{loc}(\R^d)$.
\end{remark}

We now give the radial version of the deformed Fourier transform and its inverse transform. We note that if $f$ is radial,  so is $\mathcal Ff$. Moreover,
\begin{align*}
\mathcal F f(\rho)&=
\rho^\alpha\int_0^{+\infty}\!\!\int_{|\theta|=1}e^{-2\pi i \rho r \omega\cdot \theta}\,d\theta  f(r)r^{\beta+d-1}  \,dr\\
&=\rho^\alpha\int_0^{+\infty}\!\! \widehat{d\omega}(\rho r \omega) r^{\beta+d-1} f(r)\,dr,
\end{align*}
where $\xi=\rho\omega$.
Note that $ \widehat{d\omega}$ has radial symmetry, and
$$
 \widehat{d\omega}(\rho r \omega)= \widehat{d\omega}(\rho r)=
 \int_{-\frac\pi2}^{\frac\pi2}e^{-2\pi i \rho r\sin \theta}\cos^{d-2} \theta\,d\theta.
$$
(It is equal to $2\pi (\rho r)^{-\frac{d-2}2} J_{\frac{d-2}2}(2\pi \rho r)$, where $J_{\frac{d-2}2}$ is a bessel function.)
Therefore,
\begin{align}
\mathcal F f(\rho)&=
\rho^\alpha\int_0^{+\infty}\!\!\int_{-\frac\pi2}^{\frac\pi2}e^{-2\pi i \rho r\sin \theta}\cos^{d-2} \theta \> r^{\beta+d-1} f(r) \,d\theta\,dr.\label{deformed-Fourier-radial}
\end{align}
Similarly, we have
\begin{align}
f(r)&=r^{-\beta}\int_0^{+\infty}\!\!\int_{-\frac\pi2}^{\frac\pi2}e^{2\pi i \rho r\sin \theta}\cos^{d-2} \theta\> \rho^{-\alpha+d-1} \mathcal F f(\rho)\,d\theta d\rho.\label{inverse-deformed-Fourier-radial}
\end{align}

\subsubsection{Definitions}
In this subsection, we define the incoming and outgoing components of functions and present their properties.
Our definition here is inspired by T. Tao \cite{Tao-2}. 
For convenience, we denote
\begin{align}\label{def-J}
J(r)=\int_0^{\frac\pi2}e^{2\pi i  r\sin \theta}\cos^{d-2} \theta\,d\theta.
\end{align}
Then we have
$$
J(-r)=\int_{-\frac\pi2}^0e^{2\pi i  r\sin \theta}\cos^{d-2} \theta\,d\theta.
$$
Let
$$
K( r)=\chi_{\ge 2}( r)\Big[-\frac{1}{2\pi i r}+\frac{d-3}{(2\pi i r)^3}\Big], \quad \mbox{for } d=3,4,5.
$$

\begin{definition}\label{def:outgong-incoming}
Let $\alpha<d, \beta>-d$, and the function $f\in L^1_{loc}(\R^d)$ be radial.
We define the incoming component of $f$ as
$$
f_{in}(r)=r^{-\beta}\int_0^{+\infty}\!\!\Big(J(-\rho r)+K(\rho r)\Big) \rho^{-\alpha+d-1} \mathcal F f(\rho)\, d\rho;
$$
the outgoing component of $f$ as
$$
f_{out}(r)=r^{-\beta}\int_0^{+\infty}\!\!\Big(J(\rho r)-K(\rho r)\Big) \rho^{-\alpha+d-1} \mathcal F f(\rho)\, d\rho.
$$
Moreover,  for any fixed  integer  $k$, we define the frequency restricted  incoming component of $f$ as
$$
f_{in,k}(r)=r^{-\beta}\int_0^{+\infty}\!\!\Big(J(-\rho r)+K(\rho r)\Big)\> \chi_{2^k}(\rho)\>\rho^{-\alpha+d-1} \mathcal F f(\rho)\, d\rho;
$$
correspondingly, the frequency restricted outgoing component of $f$ as
$$
f_{out,k}(r)=r^{-\beta}\int_0^{+\infty}\!\!\Big(J(\rho r)-K(\rho r)\Big) \> \chi_{2^k}(\rho)\>\rho^{-\alpha+d-1} \mathcal F f(\rho)\, d\rho.
$$
\end{definition}

\begin{remark}
If we consider the higher dimensional version, then $K(r)$ should be modified. Here we use an appropriately modified Fourier transform in order to cancel the singularity from $r^{-1}$ at origin and meanwhile to guarantee the boundedness of the incoming/outgoing projection operators on $L^2$. 
\end{remark} 

In the whole of the present paper, we set the numbers in Definition \ref{def:outgong-incoming} that
$$
\beta=\frac{d-1}{2}-2,\quad \mbox{ and }\quad  \alpha=0.
$$
(These numbers should be changed if one considers the cases when $d\ne 3,4,5$.)

From the definitions and \eqref{inverse-deformed-Fourier-radial}, we have
$$
f(r)=f_{out}(r)+f_{in}(r).
$$
Moreover,
$$
f_{out/in}(r)=\sum\limits_{k=-\infty}^{+\infty} f_{out/in,k}(r).
$$
Correspondingly, for $k_0\in \Z$, we denote
$$
f_{out/in,\ge k_0}(r)=\sum\limits_{k=k_0}^{+\infty} f_{out/in,k}(r).
$$

\subsection{Basic properties of the incoming/outgoing functions}
First, we give the estimates on the following oscillatory integrals. The first two can be regarded as some asymptotic behaviors of the restricted forms of $J(r)$.
\begin{lem}\label{lem:Jr} Let $d=3,4,5$.  There exists constant $c\in \C$ only depending on $d$, such that when $r\to +\infty$,
\begin{align}
\int_0^{\frac\pi2} e^{2\pi i  r\sin \theta}\chi_{\ge \frac\pi6}(\theta)\cos^{d-2} \theta  \,d\theta
=
c r^{-\frac{d-1}{2}} e^{2\pi i r} +O\big(r^{-\frac{d+1}{2}}\big);\label{1.04}
\end{align}
and
\begin{align}
\int_{-\frac\pi2}^{\frac\pi2}\!\!\!e^{-2\pi i  r\sin \theta}\chi_{\ge \frac\pi6}(\theta)\cos^{d-2} \theta  \,d\theta
&=
r^{-\frac{d-1}{2}}\Big(\bar c e^{-2\pi i r}+c  e^{2\pi ir}\Big) +O\big(r^{-\frac{d+1}{2}}\big).\label{1.05}
\end{align}
\end{lem}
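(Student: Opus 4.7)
The strategy is standard stationary phase applied to the endpoints $\theta=\pm\pi/2$, which are the only critical points of the phase $\sin\theta$ on the relevant interval and where, crucially, the amplitude $\cos^{d-2}\theta$ vanishes to order $d-2$. Since the cutoff $\chi_{\ge\pi/6}$ is identically $1$ on a neighborhood of $\pm\pi/2$, we may freely localize to these endpoints modulo a piece where the phase has no critical point (treated by repeated integration by parts in $\theta$, which yields an error $O(r^{-M})$ for any $M$). Thus only the contributions from small neighborhoods of $\theta=\pm\pi/2$ matter for the asymptotics \eqref{1.04}--\eqref{1.05}.

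For \eqref{1.04} we change variables $\psi=\pi/2-\theta$, so that $\sin\theta=\cos\psi=1-\psi^2/2+O(\psi^4)$ and $\cos^{d-2}\theta=\sin^{d-2}\psi=\psi^{d-2}\big(1+O(\psi^2)\big)$. After inserting a smooth cutoff $\varphi(\psi)$ equal to $1$ near $\psi=0$ (the complementary piece again being negligible by integration by parts), the integral becomes
\begin{equation*}
e^{2\pi i r}\int_0^{\pi/3}e^{-\pi i r\psi^2+i r\cdot O(\psi^4)}\,\psi^{d-2}\big(1+O(\psi^2)\big)\varphi(\psi)\,d\psi.
\end{equation*}
Rescaling $u=\sqrt{r}\,\psi$ converts this to
\begin{equation*}
e^{2\pi i r}\,r^{-(d-1)/2}\int_0^{\infty}e^{-\pi i u^2}u^{d-2}\,du\;+\;O\bigl(r^{-(d+1)/2}\bigr),
\end{equation*}
where the error comes from expanding $e^{i r\cdot O(\psi^4)}=1+O(u^4/r)$ and $(1+O(\psi^2))=1+O(u^2/r)$, each of which produces a Fresnel integral with one extra factor of $r^{-1}$. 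The resulting Fresnel constant defines $c_1=c_1(d)$, proving \eqref{1.04}.

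For \eqref{1.05} the same analysis is carried out separately near $\theta=\pi/2$ and near $\theta=-\pi/2$. The change of variables $\psi=\pi/2-\theta$ near $\pi/2$ produces a phase $-2\pi i r\sin\theta=-2\pi i r+\pi i r\psi^2+O(r\psi^4)$, giving a leading contribution $c_2\,r^{-(d-1)/2}e^{-2\pi i r}$. Near $\theta=-\pi/2$ the substitution $\psi=\pi/2+\theta$ gives $-2\pi i r\sin\theta=2\pi i r-\pi i r\psi^2+O(r\psi^4)$ and $\cos^{d-2}\theta=\sin^{d-2}\psi$, producing a leading contribution $c_3\,r^{-(d-1)/2}e^{2\pi i r}$ by the same Fresnel-integral computation. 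Summing and handling the middle region $|\theta|\le\pi/2-\delta$ by integration by parts yields \eqref{1.05}.

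The only real obstacle is bookkeeping: the critical points of $\sin\theta$ coincide with the zeros of the amplitude, so the standard Lemma~\ref{lem:stationary-phase} does not apply verbatim (one does not obtain $r^{-1/2}$ but rather $r^{-(d-1)/2}$). The remedy is precisely the rescaling $u=\sqrt{r}\psi$ above, which turns the factor $\psi^{d-2}$ into $r^{-(d-2)/2}u^{d-2}$ and delivers the additional $(d-2)/2$ powers of decay. The error bound $O(r^{-(d+1)/2})$ requires one extra order in the Taylor expansion of both phase and amplitude; this is straightforward since $d\in\{3,4,5\}$ keeps the arising Fresnel integrals absolutely convergent at infinity after the cutoff is removed.
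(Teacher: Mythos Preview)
Your approach is correct in outline and takes a genuinely different route from the paper's. The paper proceeds case by case in $d\in\{3,4,5\}$: for odd $d$ it substitutes $s=\sin\theta$ so that $\cos^{d-2}\theta\,d\theta=(1-s^2)^{(d-3)/2}\,ds$ has polynomial amplitude, and then integrates by parts explicitly in $s$; for $d=4$ it integrates by parts once in $\theta$ via $\partial_\theta e^{2\pi i r\sin\theta}=2\pi i r\cos\theta\,e^{2\pi i r\sin\theta}$ to strip one power of $\cos\theta$, reducing to an integral whose amplitude $\sin\theta$ does not vanish at the critical point $\theta=\pi/2$, whereupon the standard stationary-phase Lemma~\ref{lem:stationary-phase} applies directly. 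Your unified treatment via $\psi=\pi/2-\theta$ and the rescaling $u=\sqrt{r}\,\psi$ is the textbook ``stationary phase with vanishing amplitude'' computation and handles all $d$ at once; it is more conceptual, but you gloss over two points. First, the Fresnel integrals $\int_0^\infty e^{-\pi i u^2}u^{k}\,du$ are only conditionally convergent, not absolutely as your last sentence suggests. Second, and more substantively, the expansion $e^{i r\cdot O(\psi^4)}=1+O(u^4/r)$ is not uniform on the full range $u\lesssim\sqrt{r}$ (there $u^4/r\sim r$); the clean fix is to first make the phase exactly quadratic by the Morse-lemma substitution $\eta=2\sin(\psi/2)$ and only then Taylor-expand the resulting smooth, compactly supported amplitude in $\eta/\sqrt{r}$. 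The paper's hands-on approach sidesteps these issues at the cost of treating each dimension separately.
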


\begin{remark}
The estimates presented in this lemma are not sharp. For example, the first estimate can be improved as the form of $r^{-\frac{d-1}{2}}\big(p_K(1/r)e^{2\pi ir}+O(r^{-K-1})\big)$ for any $K\in \Z^+$, where
$p_K$ is some polynomial function of  order  $K$. However, the version presented in this lemma is simpler and enough  in this paper.
\end{remark}
\begin{proof}[Proof of Lemma \ref{lem:Jr}]
We only prove the first estimate, since the second one is the sum of
$$
\int_0^{\frac\pi2} e^{2\pi i  r\sin \theta}\chi_{\ge \frac{\pi}{6}}(\theta)\cos^{d-2} \theta \,d\theta \quad \mbox{and}\quad
\int_0^{\frac\pi2} e^{-2\pi i  r\sin \theta}\chi_{\ge \frac{\pi}{6}}(\theta)\cos^{d-2} \theta \,d\theta.
$$

When $d=3$,
\begin{align*}
\int_0^{\frac\pi2} e^{2\pi i  r\sin \theta}&\chi_{\ge \frac{\pi}{6}}(\theta)\cos \theta \,d\theta
=\int_0^{1}e^{2\pi i  r s}\chi_{\ge \frac12}(s)\,d s\\
& =\frac{1}{2\pi i r}\int_0^{1}\partial_s\Big(e^{2\pi i  r s}\Big)\chi_{\ge \frac12}(s)\,d s
 =\frac{1}{2\pi i r} e^{2\pi ir}-\frac{1}{2\pi i r}\int_0^{1}e^{2\pi i  r s} \chi_{\ge \frac12}'(s)\,d s.
\end{align*}
By integration-by-parts $K$ times, we have
$$
\int_0^{1}e^{2\pi i  r s} \chi_{\ge \frac12}'(s)\,d s=O(r^{-K}).
$$
Hence we obtain that
\begin{align*}
\int_0^{\frac\pi2} e^{2\pi i  r\sin \theta}&\chi_{\ge \frac{\pi}{6}}(\theta)\cos \theta \,d\theta
=\frac{1}{2\pi i r} e^{2\pi ir}+O(r^{-K}).
\end{align*}

When $d=4$,  using the formula,
\begin{align}
2\pi i  r \cos \theta\cdot e^{2\pi i  r\sin \theta}=\partial_\theta e^{2\pi i  r\sin \theta}, \label{Formula:dev}
\end{align}
we have
\begin{align*}
\int_0^{\frac\pi2} e^{2\pi i  r\sin \theta}&\chi_{\ge \frac{\pi}{6}}(\theta)\cos^2 \theta \,d\theta
=\frac{1}{2\pi i r}\int_0^{\frac\pi2}\partial_\theta  e^{2\pi i  r\sin \theta}\chi_{\ge \frac{\pi}{6}}(\theta)\cos \theta \,d\theta\\
&=\frac{1}{2\pi i r}\int_0^{\frac\pi2}  e^{2\pi i  r\sin \theta}\Big(\chi_{\ge \frac{\pi}{6}}(\theta)\sin \theta
-\chi_{\ge \frac{\pi}{6}}'(\theta)\cos \theta\Big) \,d\theta.
\end{align*}
Note that
\begin{align*}
\int_0^{\frac\pi2}  e^{2\pi i  r\sin \theta}\chi_{\ge \frac{\pi}{6}}'(\theta)\cos \theta \,d\theta
=
\int_0^1  e^{2\pi i  rs}\chi_{\ge \frac12}'(s)  \,ds
=O(r^{-K}).
\end{align*}
Moreover, using Lemma \ref{lem:stationary-phase}, 
\begin{align*}
\int_0^{\frac\pi2}  e^{2\pi i  r\sin \theta}\chi_{\ge \frac{\pi}{6}}(\theta)\sin \theta
 \,d\theta
&=\frac12\int_0^{\pi}  e^{2\pi i  r\sin \theta}\chi_{\frac{\pi}{6}\le \cdot\le \frac{5\pi}{6} }(\theta)\sin \theta \,d\theta\\
&=
\Big(\frac{1}{2\pi ir}\Big)^\frac12 e^{2\pi i  r}+O(r^{-\frac32}).
\end{align*}
This gives the required estimate in dimension four.

When $d=5$, we have
\begin{align*}
\int_0^{\frac\pi2} e^{2\pi i  r\sin \theta}&\chi_{\ge \frac{\pi}{6}}(\theta)\cos^3 \theta \,d\theta
=\int_0^{1}e^{2\pi i  r s}\chi_{\ge \frac12}(s)(1-s^2)\,d s\\
&=\frac{1}{2\pi i r}\int_0^{1}\partial_s\Big(e^{2\pi i  r s}\Big)\chi_{\ge \frac12}(s)(1-s^2)\,d s.
\end{align*}
By integration-by-parts once and noting the zero boundary values, it is equal to
$$
\frac{1}{2\pi i r}\int_0^{1}e^{2\pi i  r s}\Big(-\chi_{\ge \frac12}'(s)(1-s^2)+2\chi_{\ge \frac12}(s)s\Big)\,d s.
$$
Now arguing similarly as above and integration-by-parts many times, we have
\begin{align*}
\frac{1}{2\pi i r}\int_0^{1}e^{2\pi i  r s}\chi_{\ge \frac12}'(s)(1-s^2)\,d s
&=O(r^{-K}),
\end{align*}
and
\begin{align*}
\frac{1}{2\pi i r}\int_0^{1}e^{2\pi i  r s}\chi_{\ge \frac12}(s)s\,d s
&=\frac{1}{(2\pi ir)^2}\Big(1-\frac{1}{2\pi ir}\Big)e^{2\pi i  r}+O(r^{-K}).
\end{align*}
This obeys the form as claimed. Hence we finish the proof of the lemma.
\end{proof}

Second, we have the following estimates.
\begin{lem} \label{lem:ocs2} When $r\to +\infty$,
\begin{align*}
\int_0^{\frac\pi2} e^{2\pi i  r\sin \theta}\chi_{\le \frac\pi6}(\theta)\cos^{d-2} \theta  \,d\theta &= -\frac{1}{2\pi i r}+\frac{d-3}{(2\pi i r)^3}+O\big(r^{-5}\big).
\end{align*}
Moreover,
\begin{align*}
\int_{-\frac\pi2}^{\frac\pi2} e^{2\pi i  r\sin \theta}\chi_{\le \frac\pi6}(\theta)\cos^{d-2} \theta  \,d\theta &=O\big(r^{-10}\big).
\end{align*}
\end{lem}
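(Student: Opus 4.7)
The plan is to substitute $s = \sin\theta$ so that $\cos^{d-2}\theta\,d\theta = (1-s^2)^{(d-3)/2}\,ds$, reducing the first integral to
\[
I_1(r) = \int_0^{s_0} e^{2\pi i r s}\,g(s)\,ds,\qquad g(s) := \chi_{\le\pi/6}(\arcsin s)\,(1-s^2)^{(d-3)/2},
\]
where $g$ is smooth, identically $1$ near $s=0$, and vanishes past some $s_0<1$ (namely $s_0=\sin(11\pi/60)$). Then I would iterate integration by parts using $e^{2\pi irs} = (2\pi ir)^{-1}\partial_s(e^{2\pi irs})$. The boundary term at the upper endpoint always vanishes by the cutoff, so each IBP contributes only $-g^{(k-1)}(0)/(2\pi ir)^k$ from the boundary at $s=0$.

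Since $g\equiv 1$ in a neighborhood of $0$, its Taylor coefficients at $0$ agree with those of $(1-s^2)^{(d-3)/2}=1-\tfrac{d-3}{2}s^2+\tfrac{(d-3)(d-5)}{8}s^4+O(s^6)$. This gives $g(0)=1$, $g''(0)=-(d-3)$, $g^{(4)}(0)=3(d-3)(d-5)$, and $g'(0)=g'''(0)=0$ by parity. Collecting boundary terms from five integrations by parts yields exactly $-\frac{1}{2\pi ir}+\frac{d-3}{(2\pi ir)^3}+O(r^{-5})$, with the $r^{-2}$ and $r^{-4}$ terms absent, the explicit $r^{-5}$ coefficient (nonzero only for $d=4$) absorbed into the $O(r^{-5})$, and the remainder integral bounded by $\|g^{(5)}\|_{L^1}/(2\pi r)^5$.

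For the second integral, $\chi_{\le\pi/6}$ is supported in the interior subinterval $[-11\pi/60,11\pi/60]$ of $(-\pi/2,\pi/2)$, so the same substitution argument produces no boundary contributions at either endpoint. Equivalently, one applies the non-stationary-phase operator $L = (2\pi ir\cos\theta)^{-1}\partial_\theta$, which satisfies $L(e^{2\pi ir\sin\theta})=e^{2\pi ir\sin\theta}$ and whose coefficients are smooth on the support of $\chi_{\le\pi/6}$ because $\cos\theta\ge\cos(11\pi/60)>0$ there. Each integration by parts via $L^\ast$ gains a factor of $r^{-1}$, so after ten iterations we obtain the claimed $O(r^{-10})$; in fact $O(r^{-K})$ holds for every $K\ge 1$.

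The only care point is the Taylor bookkeeping at $s=0$: one must notice that odd derivatives of $(1-s^2)^{(d-3)/2}$ vanish by parity so the $r^{-2}$ and $r^{-4}$ contributions drop, and that the $r^{-3}$ coefficient equals $(d-3)/(2\pi ir)^3$. Everything else is a routine assembly of boundary terms, entirely parallel in spirit to the proof of Lemma \ref{lem:Jr}.
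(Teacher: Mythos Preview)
Your proof is correct and essentially the same non-stationary-phase integration-by-parts argument as the paper's, just expressed more cleanly: the paper works directly in $\theta$ using $\partial_\theta e^{2\pi ir\sin\theta}=2\pi ir\cos\theta\,e^{2\pi ir\sin\theta}$ and tracks the recursively defined amplitudes $\eta_k(\theta)=\partial_\theta(\eta_{k-1}(\theta)/\cos\theta)$, whereas your substitution $s=\sin\theta$ absorbs the $\cos\theta$ Jacobian up front and reduces the bookkeeping to reading off Taylor coefficients of $(1-s^2)^{(d-3)/2}$ at $s=0$. The two computations are equivalent step for step, and your version is arguably tidier.
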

\begin{proof}
Using the formula \eqref{Formula:dev} and integration-by-parts,
\begin{align*}
\int_0^{\frac\pi2} &e^{2\pi i  r\sin \theta}\chi_{\le \frac\pi6}(\theta)\cos^{d-2} \theta  \,d\theta \\
&= \frac{1}{2\pi i r}\int_0^{\frac\pi2} \partial_\theta e^{2\pi i  r\sin \theta}\chi_{\le \frac\pi6}(\theta)\cos^{d-3} \theta  \,d\theta\\
&= -\frac{1}{2\pi i r}-\frac{1}{2\pi i r}\int_0^{\frac\pi2} e^{2\pi i  r\sin \theta}\eta_1(\theta) \,d\theta,
\end{align*}
where
$$
\eta_1(\theta) =\partial_\theta \Big(\chi_{\le \frac\pi6}(\theta)\cos^{d-3} \theta \Big).
$$
Note that $\eta_1(0)=\eta_1(\frac\pi2)=0$,  then using the formula \eqref{Formula:dev} and integration-by-parts again, we have
\begin{align*}
\int_0^{\frac\pi2} &e^{2\pi i  r\sin \theta}\chi_{\le \frac\pi6}(\theta)\cos^{d-2} \theta  \,d\theta \\
&= -\frac{1}{2\pi i r}+\frac{1}{(2\pi i r)^2}\int_0^{\frac\pi2} e^{2\pi i  r\sin \theta}\eta_2(\theta) \,d\theta,
\end{align*}
where
$$
\eta_2(\theta) =\partial_\theta \Big(\frac{\eta_1(\theta)}{\cos\theta} \Big).
$$
Note that $|\eta_2(\theta)|\lesssim 1$. Moreover, by a direct computation, we have
$$
\eta_2(0)=-(d-3), \quad \eta_2(\frac\pi2)=0.
$$
Then repeating the process above again, we obtain
\begin{align*}
\int_0^{\frac\pi2} &e^{2\pi i  r\sin \theta}\chi_{\le \frac\pi6}(\theta)\cos^{d-2} \theta  \,d\theta \notag\\
&= -\frac{1}{2\pi i r}+\frac{d-3}{(2\pi i r)^3}+\frac{1}{(2\pi i r)^3}\int_0^{\frac\pi2} e^{2\pi i  r\sin \theta}\eta_3(\theta) \,d\theta.
\end{align*}
Here
$$
\eta_3(\theta) =\partial_\theta \Big(\frac{\eta_2(\theta)}{\cos\theta} \Big).
$$
Moreover, we note that $\eta_3(0)=\eta_3(\frac\pi2)=0$. Then the same process
gives
\begin{align}
\int_0^{\frac\pi2} &e^{2\pi i  r\sin \theta}\chi_{\le \frac\pi6}(\theta)\cos^{d-2} \theta  \,d\theta \notag\\
&= -\frac{1}{2\pi i r}+\frac{d-3}{(2\pi i r)^3}+\frac{1}{(2\pi i r)^5}\int_0^{\frac\pi2} e^{2\pi i  r\sin \theta}\eta_4(\theta) \,d\theta, \label{12.29}
\end{align}
where the function $\eta_4$ satisfies $|\eta_4(\theta)|\lesssim 1$.
Then we obtain the desired result in the first estimate.

The second one, because of the zero boundary values in every step,   can be obtained by integration-by-parts 10 times.
\end{proof}

As consequences of the lemmas above, we have the following variant forms of $J(r)-K(r)$. The first one is
\begin{cor}\label{cor:Jr-osc1} Let $d=3,4,5$, then
\begin{align}
\chi_{\le 1}(r)\Big(J(r)&-K(r)\Big)=\chi_{\le 1}(r)\int_0^{\frac\pi2} e^{2\pi i  r\sin \theta} \cos^{d-2} \theta  \,d\theta;\label{20.14I}
\end{align}
and
\begin{align}
\chi_{\ge 1}&(r)\Big(J(r)-K(r)\Big)=\chi_{\ge 1}(r)\int_0^{\frac\pi2} e^{2\pi i  r\sin \theta}\chi_{\ge \frac\pi6}(\theta)\cos^{d-2} \theta  \,d\theta\notag\\
&+ \chi_{ 1\le \cdot \le 2}(r)\int_0^{\frac\pi2} e^{2\pi i  r\sin \theta}\chi_{\le \frac\pi6}(\theta) \cos^{d-2}\theta \,d\theta+\chi_{\ge 2}(r)\frac{1}{(2\pi ir)^5}\int_0^{\frac\pi2} e^{2\pi i r\sin \theta}\tilde\eta_d(\theta) \,d\theta,\label{20.14III}
\end{align}
where $\tilde \eta_d, d=3,4,5$ are uniformly bounded functions.
\end{cor}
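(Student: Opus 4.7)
The statement is a direct rewriting of $J(r) - K(r)$ obtained by combining the definition of $K$ with the stationary-phase expansion (specifically formula \eqref{12.29}) produced inside the proof of Lemma \ref{lem:ocs2}. No new hard analysis is required; the content is purely algebraic bookkeeping once those ingredients are in hand.

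\textbf{First identity.} By definition $K(r) = \chi_{\ge 1}(r)\bigl[-\tfrac{1}{2\pi i r} + \tfrac{d-3}{(2\pi i r)^3}\bigr]$. Since $\chi_{\le 1}$ and $\chi_{\ge 1}$ have essentially disjoint supports, the product $\chi_{\le 1}(r) K(r)$ vanishes. Thus
$$\chi_{\le 1}(r)\bigl(J(r)-K(r)\bigr)=\chi_{\le 1}(r)J(r),$$
and unwinding the definition of $J$ gives the claim \eqref{20.14I}.

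\textbf{Second identity.} First insert the partition of unity $1=\chi_{\ge \pi/6}(\theta)+\chi_{\le \pi/6}(\theta)$ into the integral defining $J(r)$ to split
$$J(r)=J_{\mathrm{high}}(r)+J_{\mathrm{low}}(r),$$
where
$$J_{\mathrm{high}}(r):=\int_0^{\pi/2}e^{2\pi i r\sin\theta}\chi_{\ge \pi/6}(\theta)\cos^{d-2}\theta\,d\theta$$
is precisely the first piece on the right-hand side of the claim. Applying formula \eqref{12.29} (which was proved in Lemma \ref{lem:ocs2} by three successive integrations by parts that exploit the vanishing of the boundary contributions $\eta_j(0)=\eta_j(\pi/2)=0$) to $J_{\mathrm{low}}(r)$ gives
$$J_{\mathrm{low}}(r)=-\frac{1}{2\pi i r}+\frac{d-3}{(2\pi i r)^3}+\frac{1}{(2\pi i r)^5}\int_0^{\pi/2} e^{2\pi i r\sin\theta}\eta_4(\theta)\,d\theta.$$
Now multiply through by $\chi_{\ge 1}(r)$. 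On $\operatorname{supp}\chi_{\ge 1}$ the cutoff inside $K$ equals $1$, so $\chi_{\ge 1}(r)K(r)=\chi_{\ge 1}(r)\bigl[-\tfrac{1}{2\pi i r}+\tfrac{d-3}{(2\pi i r)^3}\bigr]$; the leading two terms of $\chi_{\ge 1}(r)J_{\mathrm{low}}(r)$ therefore cancel against $\chi_{\ge 1}(r)K(r)$, and what remains is exactly \eqref{20.14III} upon setting $\tilde\eta_d:=\eta_4$. The uniform boundedness of $\tilde\eta_d$ for $d\in\{3,4,5\}$ is inherited from the construction of $\eta_4$ in the proof of Lemma \ref{lem:ocs2}, since $\eta_4$ is built from $\chi_{\le \pi/6}$, finitely many of its derivatives, and powers of $\cos\theta, \sin\theta$, all smooth and bounded on $[0,\pi/2]$.

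\textbf{Main (only) subtlety.} The one bookkeeping point that needs care is the identification $\chi_{\ge 1}(r)K(r)=\chi_{\ge 1}(r)\bigl[-\tfrac{1}{2\pi i r}+\tfrac{d-3}{(2\pi i r)^3}\bigr]$, which is literal on $\{r\ge \tfrac{11}{10}\}$ where $\chi_{\ge 1}\equiv 1$, but involves $\chi_{\ge 1}(r)^2$ in the transition band $r\in[1,\tfrac{11}{10}]$. One reconciles this either by interpreting \eqref{20.14III} in the standard convention that $\chi_{\ge 1}^2=\chi_{\ge 1}$ at the level of support, or by slightly redefining $K$ using an enlarged cutoff $\tilde\chi\equiv 1$ on $\operatorname{supp}\chi_{\ge 1}$. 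Apart from this cosmetic matter, everything is a direct substitution, so I do not anticipate any analytic obstacle.
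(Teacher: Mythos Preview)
Your proof is correct and matches the paper's own argument essentially line for line: vanishing of $K$ on the $\chi_{\le 1}$ region for \eqref{20.14I}, and the high/low-angle splitting of $J$ followed by an appeal to \eqref{12.29} to cancel the leading terms of the low-angle piece against $K$ for \eqref{20.14III}, with $\tilde\eta_d=\eta_4$. You are in fact slightly more careful than the paper, which glosses over the cutoff overlap in the transition band that you flag in your final paragraph; the paper handles this informally throughout (cf.\ its remark ``we regard $\chi_1$, $\chi_1'$ as the same'').
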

\begin{proof}
From the definition of $K(r)$, we have $K(r)=0$ in the part of $\chi_{\le 1}(r)$. Hence, by the definition of $J(r)$ directly, we have \eqref{20.14I}.

For  \eqref{20.14III}, we split $J(r)$ into the following two parts,
$$
\int_0^{\frac\pi2} e^{2\pi i  r\sin \theta}\chi_{\ge \frac\pi6}(\theta) \cos^{d-2}\theta \,d\theta \quad
\mbox{ and }
\int_0^{\frac\pi2} e^{2\pi i  r\sin \theta}\chi_{\le \frac\pi6}(\theta) \cos^{d-2}\theta \,d\theta.
$$
For the latter part,  we use the identity \eqref{12.29} to obtain that
\begin{align*}
\chi_{\ge 2}(r)\int_0^{\frac\pi2} &e^{2\pi i  r\sin \theta}\chi_{\le \frac\pi6}(\theta)\cos^{d-2} \theta  \,d\theta 
=K(r)+\chi_{\ge 2}(r)\frac{1}{(2\pi i r)^5}\int_0^{\frac\pi2} e^{2\pi i  r\sin \theta}\eta_4(\theta) \,d\theta.
\end{align*}
Hence, setting $\tilde\eta_d=\eta_4$, we get \eqref{20.14III}.
Note that from the definitions in the proof of Lemma \ref{lem:ocs2}, $\eta_4$ are uniformly bounded functions whenever $d=3,4,5$.
This proves the lemma.
\end{proof}

The following result shows the asymptotic behaviors of $J(r)-K(r)$ and $J(r)+J(-r)$.
\begin{cor}\label{cor:Jr-osc2} Let $d=3,4,5$, then
\begin{align}
J(r)-K(r)&=\int_0^{\frac\pi2} e^{2\pi i  r\sin \theta}\chi_{\ge \frac\pi6}(\theta)\cos^{d-2} \theta  \,d\theta+O\big(\langle r\rangle^{-5}\big)\label{1.02}\\
&=O\big(\langle r\rangle^{-\frac{d-1}{2}}\big) ;\label{1.02-2}
\end{align}
and
\begin{align}
J(r)+J(-r)&=\int_{-\frac\pi2}^{\frac\pi2}e^{-2\pi i  r\sin \theta}\chi_{\ge \frac\pi6}(\theta)\cos^{d-2} \theta  \,d\theta+O\big(\langle r\rangle^{-10}\big).\label{1.03}
\end{align}
\end{cor}
\begin{proof}
Note that when $r\lesssim  1$, the following four functions: $J(r)-K(r)$, $J(r)+J(-r)$,
$$
\int_0^{\frac\pi2} e^{2\pi i  r\sin \theta}\chi_{\ge \frac\pi6}(\theta)\cos^{d-2} \theta  \,d\theta
\quad \mbox{and}\quad
\int_{-\frac\pi2}^{\frac\pi2}e^{-2\pi i  r\sin \theta}\chi_{\ge \frac\pi6}(\theta)\cos^{d-2} \theta  \,d\theta
$$
are uniformly bounded. Hence,  the estimates of \eqref{1.02} and \eqref{1.03} hold when $r\lesssim  1$.

When $r\gtrsim 1$, from \eqref{20.14III} , 
\begin{align*}
J(r)-K(r)&=\int_0^{\frac\pi2} e^{2\pi i  r\sin \theta}\chi_{\ge \frac\pi6}(\theta)\cos^{d-2} \theta  \,d\theta+O\big( r^{-5}\big);
\end{align*}
and from Lemma \ref{lem:ocs2}, 
\begin{align*}
J(r)+J(-r)&=\int_{-\frac\pi2}^{\frac\pi2}e^{-2\pi i  r\sin \theta}\chi_{\ge \frac\pi6}(\theta)\cos^{d-2} \theta  \,d\theta+O\big( r^{-10}\big).
\end{align*}
Hence, we obtain \eqref{1.02} and \eqref{1.03}. Combining with \eqref{1.02} and \eqref{1.04}, we obtain \eqref{1.02-2}. This finishes the proof of the corollary.
\end{proof}

\subsubsection{Matching estimates in frequency and physical spaces}
In this subsection, we will show that the incoming/outgoing projection would almost preserve the localization in both space and frequency. The first result below is related to the preservation of frequency. Specifically, if a function $f$ has high frequency $f=P_{2^k} f$, then its incoming/outgoing component will  have almost the same frequency plus a smooth perturbation.
\begin{prop}\label{prop:f++-highfreq}
Let $k_0\ge 0$.
Suppose that $f\in L^2(\R^d)$ with supp$f\subset \{x:|x|\ge 1\}$, then
$$
\big(P_{\ge 2^{k_0}}f\big)_{out/in}=\big(P_{\ge 2^{k_0}}f\big)_{out/in,\ge k_0-1}+h
$$
with
\begin{align}
\|h\|_{H^{\mu(d)}(\R^d)}\lesssim 2^{-k_0}\|f\|_{H^{-10}(\R^d)}.\label{est:h}
\end{align}
Here $\mu(d)=2$ if $d=3,4$, and $\mu(5)=3$. Similarly,
$$
\big(P_{\le 2^{k_0}}f\big)_{out/in}=\big(P_{\le 2^{k_0}}f\big)_{out/in,\le k_0+1}+\tilde h
$$
with $\tilde h$ verifying the same estimate to \eqref{est:h}.
\end{prop}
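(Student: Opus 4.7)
The plan is to realize $h$ as an explicit integral operator acting on $f$ and to exploit a frequency mismatch: the low radial frequency $\rho\lesssim 2^{k_0-1}$ supplied by the truncation must pair against the standard high frequency $|\eta|\gtrsim 2^{k_0}$ carried by $P_{\ge 2^{k_0}}f$, producing the gain $2^{-k_0}$. Concretely, by telescoping the dyadic pieces in Definition \ref{def:outgong-incoming},
$$h(r)=r^{-\beta}\int_0^\infty\big(J(\rho r)-K(\rho r)\big)\phi(\rho)\rho^{d-1}\mathcal F(P_{\ge 2^{k_0}}f)(\rho)\,d\rho,$$
where $\phi=\sum_{k\le k_0-2}\chi_{2^k}$ is smooth and supported in $\{\rho\le\tfrac{11}{20}\cdot 2^{k_0}\}$ (with the symmetric formula $J(-\rho r)+K(\rho r)$ for the incoming case). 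Substituting $\mathcal Fg(\rho)=\mathscr F(|x|^\beta g)(\rho\omega)$ together with the tempered-distribution identity $\mathscr F(|x|^\beta)=c_{d,\beta}|\xi|^{-(d+\beta)}$ (valid since $\beta=\tfrac{d-1}{2}-2>-d$), convolution gives
$$\mathcal F(P_{\ge 2^{k_0}}f)(\rho)=c\int_{\R^d}\chi_{\ge 2^{k_0}}(\eta)\hat f(\eta)\,|\rho\omega-\eta|^{-(d+\beta)}\,d\eta,$$
so that $h(r)=\int_{\R^d}\chi_{\ge 2^{k_0}}(\eta)\hat f(\eta)K_h(r,\eta)\,d\eta$ with
$$K_h(r,\eta)=c\,r^{-\beta}\int_0^\infty\big(J(\rho r)-K(\rho r)\big)\phi(\rho)\rho^{d-1}\,|\rho\omega-\eta|^{-(d+\beta)}\,d\rho.$$

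On the joint support of $\phi$ and $\chi_{\ge 2^{k_0}}$, one has $|\rho\omega-\eta|\gtrsim|\eta|\gtrsim 2^{k_0}$, so the kernel inherits the pointwise factor $|\rho\omega-\eta|^{-(d+\beta)}\lesssim 2^{-(d+\beta)k_0}$ with $d+\beta=\tfrac{3d-5}{2}\in\{2,\tfrac72,5\}$, strictly stronger than the required $2^{-k_0}$. In the degenerate case $d=5$ one has $\beta=0$, $\mathcal F=\mathscr F$ on radial functions, and the supports of $\phi$ and $\chi_{\ge 2^{k_0}}$ become disjoint, so $h\equiv 0$; only $d=3,4$ carry actual content. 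To upgrade this pointwise gain into the $H^{\mu(d)}$ bound, I would use the oscillatory asymptotic expansion of $J-K$ from Corollary \ref{cor:Jr-osc2} to recast $(J-K)(\rho r)$ as an integral with phase $2\pi\rho r$, apply $|\nabla|^{\mu(d)}$ in $r$, integrate by parts in $\rho$ against the smooth profile $\phi(\rho)\rho^{d-1}|\rho\omega-\eta|^{-(d+\beta)}$, and transfer the pairing back to $y$-space by Parseval. The spatial support condition $\supp f\subset\{|y|\ge 1\}$ then permits absorbing the ten lost derivatives via a Cauchy--Schwarz estimate against $\|f\|_{H^{-10}}$, with the $L^2$-boundedness of the resulting radial Fourier-integral operator supplied by Lemma \ref{lem:L2-estimate-FIO}. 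The analogous statement for $P_{\le 2^{k_0}}f$ is obtained by the same argument applied to the high-frequency tail $\sum_{k\ge k_0+2}$ of its outgoing/incoming expansion.

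The main obstacle is the simultaneous extraction of the $\mu(d)$ radial derivatives on $h$ and the mismatch gain $2^{-k_0}$ through the oscillatory kernel $J-K$, while keeping track of all scales. In dimensions $d=3,4$ where $\beta<0$, the factor $|x|^\beta$ is singular at the origin; this causes no trouble because $P_{\ge 2^{k_0}}f$ is smooth and rapidly decaying on $\{|x|<1/2\}$ by the mismatch estimate in Lemma \ref{lem:mismatch} combined with $\supp f\subset\{|x|\ge 1\}$, and this near-origin contribution is easily absorbed into the final $2^{-k_0}\|f\|_{H^{-10}}$ budget.
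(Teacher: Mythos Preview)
Your observation that $h\equiv 0$ when $d=5$ is correct and tidier than what the paper does: since $\beta=0$, the deformed transform $\mathcal F$ coincides with the standard one on radial functions, and the supports of $\phi$ and of $\widehat{P_{\ge 2^{k_0}}f}$ are genuinely disjoint. For $d=3,4$, however, your convolution-kernel route has a real gap in reaching the $H^{-10}$ norm on the right.

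The representation $\mathcal F(P_{\ge 2^{k_0}}f)(\rho)=c\int\chi_{\ge 2^{k_0}}(\eta)\hat f(\eta)\,|\rho\omega-\eta|^{-(d+\beta)}\,d\eta$ yields a kernel with \emph{fixed} decay $|\eta|^{-(d+\beta)}$ (here $d+\beta\in\{2,\tfrac72\}$), because that is precisely the decay of $\mathscr F(|x|^\beta)$ at infinity. Integration by parts in $\rho$ against the phase $e^{2\pi i\rho r}$ redistributes powers between $r$ and $\rho$ but does not improve the $\eta$-decay: after $M$ integrations the Leibniz term in which every $\partial_\rho$ lands on $\phi(\rho)\rho^{d-1}$ still carries only the original factor $|\rho\omega-\eta|^{-(d+\beta)}$. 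Your appeal to the spatial support of $f$ does not help either: $\supp f\subset\{|y|\ge 1\}$ makes $\hat f$ smooth, not decaying, and there is no mechanism by which a bound by $\|f\|_{L^2}$ can be converted into one by the strictly smaller quantity $\|f\|_{H^{-10}}$. At best your argument delivers something like $\|h\|_{H^{\mu(d)}}\lesssim 2^{-(d+\beta)k_0}\|f\|_{L^2}$, which does not prove the stated estimate.

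The paper obtains the arbitrary gain by splitting $|x|^\beta=\chi_{\ge 1/2}|x|^\beta+(1-\chi_{\ge 1/2})|x|^\beta$ \emph{before} passing to Fourier space, and working dyadically in $k\ge k_0$. For the smooth piece, the low-$\rho$ cutoff against $P_{2^k}f$ (frequency $\sim 2^k$) forces the weight to contribute through $P_{\ge 2^{k-1}}\big(\chi_{\ge 1/2}|x|^\beta\big)$, and since $\chi_{\ge 1/2}|x|^\beta\in C^\infty$ this is $O(2^{-Mk})$ for every $M$. For the near-origin piece, the physical-space mismatch between $(1-\chi_{\ge 1/2})$ and $\supp f\subset\{|x|\ge 1\}$ (Lemma~\ref{lem:mismatch}) gives the same arbitrary gain. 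You invoked the mismatch only to tame the singularity of $|x|^\beta$ at the origin; in fact the smooth/singular splitting of $|x|^\beta$, combined with that mismatch, is exactly what converts the fixed exponent $d+\beta$ into an arbitrary one and, after summing in $k$, produces the $H^{-10}$ bound.
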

\begin{proof}
We only give the estimate of the first part, since the second one can be treated in the same manner.
Moreover, considering the support of $f$, for simplifying the notations, we write
$$
f=\chi_{\ge 1}f.
$$
To give the desired estimate,
we use the Littlewood-Paley decomposition and write
$$
P_{\ge 2^{k_0}}f=\sum\limits_{k=k_0}^{+\infty} P_{2^k}f.
$$
From \eqref{deformed-Fourier}, we note that
$$
\mathcal F (P_{2^{k}}f)(\rho)= \mathscr F\Big( |x|^\beta P_{2^{k}}f\Big)(\rho).
$$
Accordingly,  we decompose $\mathcal F (P_{2^{k}}f)(\rho)$ into the following two parts,
\begin{align*}
 \chi_{\ge  2^{k_0-1}} &(\rho)\>\mathscr F\Big(|x|^\beta P_{2^{k}}f\Big)(\rho)
+ \>\chi_{\le  2^{k_0-1}} (\rho) \mathscr F\Big( |x|^\beta P_{2^{k}}f\Big)(\rho).
\end{align*}
Now due to the support of $f$, we further decompose the expression above as
\begin{align*}
 \chi_{\ge  2^{k_0-1}} &(\rho)\>\mathscr F\Big(|x|^\beta P_{2^{k}}f\Big)(\rho)
+ \>\chi_{\le  2^{k_0-1}} (\rho) \mathscr F\Big(\chi_{\ge  \frac12} |x|^\beta \cdot P_{2^{k}}\chi_{\ge  1}f\Big)(\rho)\\
&\qquad + \>\chi_{\le  2^{k_0-1}} (\rho)\mathscr F\Big( \chi_{\le \frac12}(x)\,|x|^\beta\cdot P_{2^{k}}\chi_{\ge 1}f\Big)(\rho).
\end{align*}
We denote
\begin{align*}
g_1(\rho)&= \>\chi_{\le  2^{k_0-1}} (\rho) \mathscr F\Big(\chi_{\ge  \frac12}(x)\, |x|^\beta\cdot  P_{2^{k}}\chi_{\ge  1}f\Big)(\rho);\\
g_2(\rho)&= \>\chi_{\le  2^{k_0-1}} (\rho)\mathscr F\Big( \chi_{\le \frac12}(x)\,|x|^\beta\cdot P_{2^{k}}\chi_{\ge 1}f\Big)(\rho).
\end{align*}
Then we have
\begin{align}
\mathcal F (P_{2^{k}}f)(\rho)
=\chi_{\ge  2^{k_0-1}}(\rho)\>\mathcal F (P_{2^{k}}f)(\rho)+g_1+g_2.\label{13.03}
\end{align}
From Definition \ref{def:outgong-incoming}, we have
\begin{align*}
\big(P_{2^{k}}f\big)_{out}&=r^{-\beta}\int_0^{+\infty}\!\!\Big(J(\rho r)-K(\rho r)\Big) \rho^{d-1} \mathcal F (P_{2^{k}}f)(\rho)\,d\rho,
\end{align*}
thus using  \eqref{13.03}, we write
\begin{align*}
\big(P_{2^{k}}f\big)_{out}=&r^{-\beta}\int_0^{+\infty}\!\!\Big(J(\rho r)-K(\rho r)\Big) \rho^{d-1} \chi_{\ge  2^{k_0-1}}(\rho)\>\mathcal F (P_{2^{k}}f)(\rho)\,d\rho\\
&\qquad+r^{-\beta}\int_0^{+\infty}\!\!\Big(J(\rho r)-K(\rho r)\Big) \rho^{d-1}\>g_1(\rho)\,d\rho\\
&\qquad+r^{-\beta}\int_0^{+\infty}\!\!\Big(J(\rho r)-K(\rho r)\Big) \rho^{d-1}\>g_2(\rho)\,d\rho.
\end{align*}
Denote
\begin{align}
h_k(r)=&r^{-\beta}\int_0^{+\infty}\!\!\Big(J(\rho r)-K(\rho r)\Big) \rho^{d-1} \>g_1(\rho)\,d\rho\notag\\
&+r^{-\beta}\int_0^{+\infty}\!\!\Big(J(\rho r)-K(\rho r)\Big) \rho^{d-1} \>g_2(\rho)\,d\rho,\label{def:hk}
\end{align}
then
$$
\big(P_{2^{k}}f\big)_{out}=\big(P_{2^{k}}f\big)_{out,\ge k_0-1}+h_k.
$$
Moreover, let
\begin{align*}
h(r)=\sum\limits_{k=k_0}^{+\infty} h_k(r),
\end{align*}
then
$$
\big(P_{\ge 2^{k_0}}f\big)_{out/in}=\big(P_{\ge 2^{k_0}}f\big)_{out/in,\ge k_0-1}+h.
$$
Now we give the control of $h_k$. To this end, we first claim that for any $M\in \Z^+$,
\begin{align}
\|g_1\|_{L^\infty_\rho}+\|g_2\|_{L^\infty_\rho}
\lesssim_M 2^{-Mk}\big\|P_{2^k}f\big\|_{L^2(\R^d)}.\label{est:g1g2}
\end{align}
To prove this claim, we note that
\begin{align*}
g_1(\rho)&= \>\chi_{\le  2^{k_0-1}} (\rho) \mathscr F\Big(P_{\ge 2^{k-2}}\big(\chi_{\ge  \frac12} |x|^\beta\big)\cdot  P_{2^{k}} f\Big)(\rho).
\end{align*}
Here we have used the fact $k\ge k_0$. Then by Hausdorff-Young's inequality, 
\begin{align*}
\|g_1\|_{L^\infty_\rho}
&\lesssim
\Big\|\mathscr F\Big(P_{\ge 2^{k-2}}\big(\chi_{\ge  \frac12} |x|^\beta\big)\cdot  P_{2^{k}} f\Big)\Big\|_{L^\infty_\rho}\\
&\lesssim
\big\|P_{\ge 2^{k-2}}\big(\chi_{\ge  \frac12} |x|^\beta\big)\cdot  P_{2^{k}} f\big\|_{L^1_x(\R^d)}\\
&\lesssim
\big\|P_{\ge 2^{k-2}}\big(\chi_{\ge  \frac12} |x|^\beta\big)\big\|_{L^2_x(\R^d)} \big\| P_{2^{k}} f\big\|_{L^2_x(\R^d)}.
\end{align*}
Note that
$$
\big\|P_{\ge 2^{k-2}}\big(\chi_{\ge  \frac12} |x|^\beta\big)\big\|_{L^2_x(\R^d)}
\lesssim 2^{-Mk}\big\|(-\Delta)^\frac {M}2 P_{\ge 2^{k-2}}\big(\chi_{\ge  \frac12} |x|^\beta\big)\big\|_{L^2_x(\R^d)}
\lesssim 2^{-Mk},
$$
thus we obtain that
\begin{align*}
\|g_1\|_{L^\infty_\rho}
&\lesssim   2^{-Mk}\big\|P_{2^k}f\big\|_{L^2(\R^d)}.
\end{align*}

For $g_2$, we rewrite it as
$$
g_2(\rho)= \>\chi_{\le  2^{k_0-1}} (\rho)\mathscr F\Big( \chi_{\le \frac12}(x)\,|x|^\beta\cdot \chi_{\le \frac34}(x)\,P_{2^{k}}\chi_{\ge 1}f\Big)(\rho).
$$
Then
\begin{align*}
\|g_2\|_{L^\infty_\rho}
&\lesssim
\Big\|\mathscr F\Big(\chi_{\le \frac12}(x)\,|x|^\beta\cdot \chi_{\le \frac34}(x)\,P_{2^{k}}\chi_{\ge 1}f\Big)\Big\|_{L^\infty_\rho}\\
&\lesssim
\big\|\chi_{\le \frac12}(x)\,|x|^\beta\cdot \chi_{\le \frac34}(x)\,P_{2^{k}}\chi_{\ge 1}f\big\|_{L^1_x(\R^d)}\\
&\lesssim
\big\|\chi_{\le \frac12}(x)\,|x|^\beta\big\|_{L^2_x(\R^d)} \big\|  \chi_{\le \frac34}\,P_{2^{k}}\chi_{\ge 1}f\big\|_{L^2_x(\R^d)}.
\end{align*}
Since $\beta=\frac{d-5}{2}>-\frac d2$, we have 
$$
\big\|\chi_{\le \frac12}\,|x|^\beta\big\|_{L^2_x(\R^d)}\lesssim 1.
$$
Moreover, by the mismatch estimate given in Lemma \ref{lem:mismatch}, we have
\begin{align*}
\big\|  \chi_{\le \frac34}\,P_{2^{k}}\chi_{\ge 1}f\big\|_{L^2_x(\R^d)}
\lesssim
2^{-Mk}\big\|P_{2^k}f\big\|_{L^2(\R^d)}.
\end{align*}
Hence, we obtain that
\begin{align*}
\|g_2\|_{L^\infty_\rho}
&\lesssim   2^{-Mk}\big\|P_{2^k}f\big\|_{L^2(\R^d)}.
\end{align*}
This together with the estimate on $g_1$, gives the claim \eqref{est:g1g2}.

Based on \eqref{est:g1g2}, we consider $\|h_k\|_{L^2(\R^d)}$. Note that
\begin{align*}
\|h_k\|_{L^2(\R^d)}=&c\big\|r^{\frac{d-1}{2}}h_k(r)\big\|_{L^2_r}.
\end{align*}
Further, 
\begin{align*}
r^{\frac{d-1}{2}}h_k(r)
=&r^2\int_0^{+\infty}\!\!\Big(J(\rho r)-K(\rho r)\Big) \rho^{d-1} \>g_1(\rho)\,d\rho\\
&+r^2\int_0^{+\infty}\!\!\Big(J(\rho r)-K(\rho r)\Big) \rho^{d-1} \>g_2(\rho)\,d\rho.
\end{align*}
We only consider the first term. Since  the estimates for $g_1,g_2$ are the same,  see \eqref{est:g1g2},  they can be treated in the same way.
To do this, we split it into the following two parts again,
\begin{subequations}\label{h-12}
\begin{align}
r^2\int_0^{+\infty} J(\rho r)\chi_{\le 1}(\rho r)\rho^{d-1} \>g_1(\rho)\,d\rho,\label{h-Smaller1}
\end{align}
and
\begin{align}
r^2\int_0^{+\infty} \big(J(\rho r)-K(\rho r)\big)\chi_{\ge 1}(\rho r)\rho^{d-1} \>g_1(\rho)\,d\rho.\label{h-Larger1}
\end{align}
\end{subequations}
Since $|J(\rho r)|\lesssim 1$, \eqref{h-Smaller1} can be controlled by
\begin{align*}
\int_0^{+\infty} \chi_{\le 1}(\rho r)\rho^{d-3} \>\big|g_1(\rho)\big|\,d\rho.
\end{align*}
Since $d\ge 3$,
\begin{align*}
\|\eqref{h-Smaller1}\|_{L^2_r}
\lesssim &
\Big\|\int_0^{+\infty} \chi_{\le 1}(\rho r)\rho^{d-3}\chi_{\le  2^{k_0-1}} (\rho)\>\big|g_1(\rho)\big|\,d\rho\Big\|_{L^2_r}\\
\lesssim &
\int_0^{+\infty} \rho^{d-\frac72}\chi_{\le  2^{k_0-1}} (\rho)\>\big|g_1(\rho)\big|\,d\rho\\
\lesssim &
2^{(d-\frac52)k_0-Mk}\big\|P_{2^k}f\big\|_{L^2(\R^d)}.
\end{align*}
Then, choosing $M$ large enough, we have 
\begin{align}
\|\eqref{h-Smaller1}\|_{L^2_r}
\lesssim
2^{-\frac12 Mk}\big\|P_{2^k}f\big\|_{L^2(\R^d)}. \label{est:h-Smaller1}
\end{align}

For \eqref{h-Larger1}, according to \eqref{1.02}, we split it into the following two parts,
\begin{subequations}\label{h-Larger}
\renewcommand{\theequation}
{\theparentequation-\arabic{equation}}
\begin{align}
r^2\int_0^{+\infty}\!\!\!\int_0^{\frac\pi2} e^{2\pi i  \rho r\sin \theta}\chi_{\ge \frac\pi6}(\theta)\cos^{d-2} \theta  \,d\theta \chi_{\ge 1}(\rho r)\rho^{d-1} \>g_1(\rho)\,d\rho,\label{h-Larger1-1}
\end{align}
and
\begin{align}
r^2\int_0^{+\infty}O\big(\langle \rho r\rangle^{-5}\big) \chi_{\ge 1}(\rho r)\rho^{d-1} \>g_1(\rho)\,d\rho.\label{h-Larger1-2}
\end{align}
\end{subequations}
To estimate \eqref{h-Larger1-1}, we use the formula
$$
 e^{2\pi i  \rho r\sin \theta}=\frac1{(2\pi i r\sin \theta)^2}  \partial_\rho^2 e^{2\pi i  \rho r\sin \theta}
$$
to reduce it as
\begin{align*}
\int_0^{+\infty}\!\!\!\int_0^{\frac\pi2} \frac1{(2\pi i  \sin \theta)^2}  \partial_\rho^2 e^{2\pi i  \rho r\sin \theta}\chi_{\ge \frac\pi6}(\theta)\cos^{d-2} \theta  \,d\theta \chi_{\ge 1}(\rho r)\rho^{d-1} \>g_1(\rho)\,d\rho.
\end{align*}
By integration-by-parts, it is equal to
\begin{align*}
\int_0^{+\infty}\!\!\!\int_0^{\frac\pi2} \frac1{(2\pi i  \sin \theta)^2} e^{2\pi i  \rho r\sin \theta}\chi_{\ge \frac\pi6}(\theta)\cos^{d-2} \theta  \,d\theta  \>\partial_\rho^2\Big[\chi_{\ge 1}(\rho r)\rho^{d-1} \>g_1(\rho)\Big]\,d\rho.
\end{align*}
Then by Corollary \ref{cor:L2-estimate-FIO}, we have
\begin{align*}
\big\|\eqref{h-Larger1-1}\big\|_{L^2_r}
\lesssim &
\int_0^{\frac\pi2} \frac1{(2\pi  \sin \theta)^\frac52} \chi_{\ge \frac\pi6}(\theta)\cos^{d-2} \theta  \,d\theta\\
&\quad  \cdot
 \Big(\big\|\rho^{d-3} \>g_1(\rho)\big\|_{L^2_\rho}+\big\|\rho^{d-2} \>\partial_\rho g_1(\rho)\big\|_{L^2_\rho}+\big\|\rho^{d-1} \>\partial_\rho^2 g_1(\rho)\big\|_{L^2_\rho}\Big).
\end{align*}
Note that
\begin{align*}
\big\|\rho^{d-3} \>g_1(\rho)\big\|_{L^2_\rho}
\lesssim &
\big\|\chi_{\le  2^{k_0-1}} (\rho)\rho^{d-3}\big\|_{L^2_\rho}\|g_1\|_{L^\infty_\rho}\\
\lesssim &
2^{(d-\frac52)k_0-Mk}\big\|P_{2^k}f\big\|_{L^2(\R^d)}
\lesssim
2^{-\frac12 Mk}\big\|P_{2^k}f\big\|_{L^2(\R^d)};
\end{align*}
and furthermore, by the Plancherel identity, 
\begin{align*}
\big\|\rho^{d-2} \>\partial_\rho g_1(\rho)\big\|_{L^2_\rho}
\lesssim &
2^{\frac{d-3}2k_0}\Big\|\rho^{\frac{d-1}2} \chi_{\le  2^{k_0-1}} (\rho) \>\nabla_\xi \mathscr F\Big(\big(\chi_{\ge  \frac12} |x|^\beta\big)\cdot  P_{2^{k}} f\Big)(\rho)\Big\|_{L^2_\rho}\\
\lesssim &
2^{\frac{d-3}2k_0}\Big\|\chi_{\le  2^{k_0-1}} (\xi)\>\nabla \mathscr F\Big(P_{\ge 2^{k-2}}\big(\chi_{\ge  \frac12} |x|^\beta \big)\cdot  P_{2^{k}} f\Big)(\xi)\Big\|_{L^2_\xi(\R^d)}\\
\lesssim &
2^{\frac{d-3}2k_0}\big\|P_{\ge 2^{k-2}}\big(\chi_{\ge  \frac12} |x|^\beta x\big)\cdot  P_{2^{k}} f\big\|_{L^2(\R^d)}\\
\lesssim &
2^{\frac{d-3}2k_0}\big\|P_{\ge 2^{k-2}}\big(\chi_{\ge  \frac12} |x|^\beta x\big)\big\|_{L^\infty(\R^d)} \big\|P_{2^{k}} f\big\|_{L^2(\R^d)}\\
\lesssim &
2^{-\frac12 Mk}\big\|P_{2^{k}} f\big\|_{L^2(\R^d)}.
\end{align*}
Moreover,  using the following estimate instead, 
$$
\big\|P_{\ge 2^{k-2}}\big(\chi_{\ge  \frac12} |x|^{\beta+2}\big)\big\|_{L^\infty(\R^d)}\lesssim 2^{-Mk},
$$
we also have 
\begin{align*}
\big\|\rho^{d-1} \>\Delta g_1\big\|_{L^2_\rho}
\lesssim &
2^{-\frac12 Mk}\big\|P_{2^k}f\big\|_{L^2(\R^d)}.
\end{align*}
Since $\partial_{\rho}^2=\Delta - \frac{d-1}{\rho}\partial_\rho$, by the estimates above, we get 
$$
\big\|\rho^{d-1} \>\partial_\rho^2 g_1(\rho)\big\|_{L^2_\rho}\lesssim 2^{-\frac12 Mk}\big\|P_{2^k}f\big\|_{L^2(\R^d)}.
$$
Therefore,
\begin{align*}
\big\|\eqref{h-Larger1-1}\big\|_{L^2_r}
\lesssim 2^{-\frac12 Mk}\big\|P_{2^k}f\big\|_{L^2(\R^d)}.
\end{align*}
For \eqref{h-Larger1-2}, it is bounded by
\begin{align*}
r^{-1}\int_0^{+\infty} \chi_{\ge 1}(\rho r)\rho^{d-4} \>|g_1(\rho)|\,d\rho.
\end{align*}
Similar as \eqref{h-Smaller1},
\begin{align*}
\|\eqref{h-Larger1-2}\|_{L^2_r}
\lesssim
2^{-\frac12 Mk}\big\|P_{2^k}f\big\|_{L^2(\R^d)}.
\end{align*}
Combining the above two estimates for \eqref{h-Larger}, we obtain
\begin{align*}
\|\eqref{h-Larger1}\|_{L^2_r}
\lesssim
2^{-\frac12 Mk}\big\|P_{2^k}f\big\|_{L^2(\R^d)}.
\end{align*}
Then collecting the above two findings for \eqref{h-12}, we have
$$
\|h_k\|_{L^2(\R^d)}\lesssim 2^{-\frac12 Mk}\big\|P_{2^k}f\big\|_{L^2(\R^d)}.
$$

Now we consider the estimates of $h_k$ with the high-order derivatives. For $\|\Delta h_k\|_{L^2(\R^d)}$, it is equivalent to
$$
\big\|r^{\frac{d-1}2}\partial_{rr}h_k\big\|_{L^2_r}+\big\|r^{\frac{d-1}2-1}\partial_{r}h_k\big\|_{L^2_r}.
$$
Note that from the definition of $h_k$ in \eqref{def:hk}, both of $r^{\frac{d-1}2}\partial_{rr}h_k$ and $r^{\frac{d-1}2-1}\partial_{r}h_k$ are the combination of the following three parts,
\begin{align}
\int_0^{+\infty}\!\!\Big(J(\rho r)-K(\rho r)\Big) \rho^{d-1} \>g_j(\rho)\,d\rho;\notag\\
r\int_0^{+\infty}\!\!\partial_r\Big(J(\rho r)-K(\rho r)\Big) \rho^{d-1} \>g_j(\rho)\,d\rho;\label{17.07}
\end{align}
and
\begin{align*}
r^2\int_0^{+\infty}\!\!\partial_r^2\Big(J(\rho r)-K(\rho r)\Big) \rho^{d-1} \>g_j(\rho)\,d\rho,
\end{align*}
where $j=1,2$. When $d=5$, we also need to estimate $\|\nabla \Delta h_k\|_{L^2(\R^d)}$, which is equivalent to
$$
\big\|r^{\frac{d-1}2}\partial_{rrr}h_k\big\|_{L^2_r}+\big\|r^{\frac{d-1}2-1}\partial_{rr}h_k\big\|_{L^2_r}+\big\|r^{\frac{d-1}2-2}\partial_{r}h_k\big\|_{L^2_r}.
$$
Note that  $\beta=0$ in this case, hence all of $r^{\frac{d-1}2}\partial_{rrr}h_k$, $r^{\frac{d-1}2-1}\partial_{rr}h_k$ and $r^{\frac{d-1}2-2}\partial_{r}h_k$ are  the combination of the following three parts,
\begin{align*}
\int_0^{+\infty}\!\!\partial_r\Big(J(\rho r)-K(\rho r)\Big) \rho^{d-1} \>g_j(\rho)\,d\rho;\\
r\int_0^{+\infty}\!\!\partial_r^2\Big(J(\rho r)-K(\rho r)\Big) \rho^{d-1} \>g_j(\rho)\,d\rho;
\end{align*}
and
\begin{align*}
r^2\int_0^{+\infty}\!\!\partial_r^3\Big(J(\rho r)-K(\rho r)\Big) \rho^{d-1} \>g_j(\rho)\,d\rho.
\end{align*}
(Note that there is no singularity in $r$, due to $\beta=0$.)

By the formulas given in Corollary \ref{cor:Jr-osc1}, we have the explicit form of $\partial_r^l\Big(J(\rho r)-K(\rho r)\Big)$ for $l=1,2,3$, then similar argument can be used to estimate all the terms which can be dominated by $2^{-\frac12 Mk}\big\|P_{2^k}f\big\|_{L^2(\R^d)}$. For the sake of completeness, we take the term \eqref{17.07}  for example and give the estimation.
To do this, we split \eqref{17.07} into the following two parts,
\begin{subequations}\label{20.12-13}
\begin{align}
r\int_0^{+\infty}\!\!\partial_r\Big(\big(J(\rho r)-K(\rho r)\big)\chi_{\le 1}(\rho r)\Big) \rho^{d-1} \>g_j(\rho)\,d\rho; \label{20.12}
\end{align}
and
\begin{align}
r\int_0^{+\infty}\!\!\partial_r\Big(\big(J(\rho r)-K(\rho r)\big)\chi_{\ge 1}(\rho r)\Big) \rho^{d-1} \>g_j(\rho)\,d\rho.\label{20.13}
\end{align}
\end{subequations}
For \eqref{20.12}, we use the following estimate which is from \eqref{20.14I},
\begin{align*}
\Big|\partial_r\Big(\big(J(\rho r)-K(\rho r)\big)\chi_{\le 1}(\rho r)\Big)\Big|\lesssim \rho \chi_{\le 1}(\rho r).
\end{align*}
Then \eqref{20.12} can be dominated by
\begin{align*}
\int_0^{+\infty}\chi_{\le 1}(\rho r) \rho^{d-1} \>\big|g_j(\rho)\big|\,d\rho.
\end{align*}
Hence,
\begin{align*}
\| \eqref{20.12} \|_{L^2_r}
\lesssim &
\int_0^{+\infty}  \rho^{d-\frac32} \>\big|g_j(\rho)\big|\,d\rho\\
\lesssim & 2^{(d-\frac12)k_0}\|g_j\|_{L^\infty_\rho}
\lesssim 2^{-\frac12 Mk}\big\|P_{2^k}f\big\|_{L^2(\R^d)}.
\end{align*}
For \eqref{20.13}, we use the following estimate which is from \eqref{20.14III},
\begin{align*}
\partial_r\Big(\chi_{\ge 1}(\rho r)\big(J(\rho r)-K(\rho r)\big)\Big)=2\pi i\rho\> \chi_{\ge 1}(\rho r) &\int_0^{\frac\pi2} e^{2\pi i  \rho r\sin \theta}\chi_{\ge \frac\pi6}(\theta)\sin \theta\cos^{d-2} \theta  \,d\theta\\
&+\chi_{\gtrsim 1}(\rho r)\cdot O\big(\rho^{-4}r^{-5}\big).
\end{align*}
Accordingly, \eqref{20.13} can be split into the following two subparts again,
\begin{subequations}
\renewcommand{\theequation}
{\theparentequation-\arabic{equation}}
\begin{align}
2\pi i r\int_0^{+\infty}\!\!\int_0^{\frac\pi2} e^{2\pi i \rho r\sin \theta}\chi_{\ge \frac\pi6}(\theta)\sin \theta\cos^{d-2} \theta  \,d\theta\chi_{\ge 1}(\rho r) \rho^{d} \>g_j(\rho)\,d\rho;\label{20.13-I}
\end{align}
and
\begin{align}
r\int_0^{+\infty}\!\!O\big(\rho^{-4}r^{-5}\big)\chi_{\gtrsim 1}(\rho r) \rho^{d-1} \>g_j(\rho)\,d\rho.\label{20.13-II}
\end{align}
\end{subequations}
Then the part \eqref{20.13-I} can be treated similarly as \eqref{h-Larger1-1}; the part \eqref{20.13-II} can be treated similarly as \eqref{h-Larger1-2},
and thus we have 
\begin{align*}
\|\eqref{20.13}\|_{L^2_r}\le  \| \eqref{20.13-I} \|_{L^2_r}+\| \eqref{20.13-II} \|_{L^2_r}
\lesssim   2^{-\frac12 Mk}\big\|P_{2^k}f\big\|_{L^2(\R^d)}.
\end{align*}
Therefore, from the estimates on \eqref{20.12-13}, we get
\begin{align*}
\| \eqref{17.07} \|_{L^2_r}
\lesssim   2^{-\frac12 Mk}\big\|P_{2^k}f\big\|_{L^2(\R^d)}.
\end{align*}
Combining the estimates above, we establish that
$$
\|h_k\|_{H^{\mu(d)}(\R^d)}\lesssim 2^{-\frac12 Mk}\big\|P_{2^k}f\big\|_{L^2(\R^d)}.
$$
Therefore, by summation in $k$ and choosing $M$ suitably large, we obtain that
$$
\|h\|_{H^{\mu(d)}(\R^d)}\lesssim 2^{-k_0}\big\| f\big\|_{H^{-10}(\R^d)}.
$$
This finishes the proof of the proposition.
 \end{proof}

The following result shows that if $f$ is supported outside of a ball, then $f_{out/in}$ is also almost supported outside of the ball.
\begin{lem}\label{lem:supportf+}
Let $\mu(d)$ be defined in Proposition \ref{prop:f++-highfreq}. Suppose that supp$f\subset \{x:|x|>1\}$, then
\begin{align*}
\big\|\chi_{\le \frac 14} (P_{\ge 1}f)_{out/in}\big\|_{H^{\mu(d)}(\R^d)}
\lesssim \|f\|_{H^{-1}(\R^d)},
\end{align*}
and for any $k\in \Z^+$,
\begin{align*}
\big\|\chi_{\le \frac 14} (P_{2^k}f)_{out/in, \ge k-1}\big\|_{H^{\mu(d)}(\R^d)}
\lesssim 2^{-2k}\|P_{2^k}f\|_{L^2(\R^d)}.
\end{align*}
\end{lem}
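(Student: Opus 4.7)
My approach follows the strategy of Proposition \ref{prop:f++-highfreq}, but now exploiting the spatial support of $f$ rather than frequency localization alone. I would prove the dyadic (second) estimate first, then deduce the first by summation: by Proposition \ref{prop:f++-highfreq} the function $(P_{\ge 1}f)_{out/in}$ differs from $\sum_{k\ge 0}(P_{2^k}f)_{out/in,\ge k-1}$ by a term controlled in $H^{\mu(d)}$ by $\|f\|_{H^{-10}}\le \|f\|_{H^{-1}}$, and Cauchy--Schwarz gives $\sum_{k\ge 0} 2^{-2k}\|P_{2^k}f\|_{L^2}\lesssim \|f\|_{\dot H^{-1}}\le \|f\|_{H^{-1}}$, since $2^{-k}\|P_{2^k}f\|_{L^2}$ is the $k$-th dyadic piece of the $\dot H^{-1}$-norm for $k\ge 0$.

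For the dyadic estimate, I write by definition
\begin{equation*}
(P_{2^k}f)_{out,\ge k-1}(r) = r^{-\beta}\int_0^\infty \bigl(J(\rho r)-K(\rho r)\bigr)\chi_{\ge 2^{k-1}}(\rho)\,\rho^{d-1}\,\mathcal F(P_{2^k}f)(\rho)\,d\rho,
\end{equation*}
use $\mathcal F(P_{2^k}f)(\rho)=\mathscr F(|x|^\beta P_{2^k}f)(\rho)$, and split $P_{2^k}f = \chi_{\ge a}P_{2^k}f + (1-\chi_{\ge a})P_{2^k}f$ for some $a$ close to $1$ (say $a=3/4$). Since $\mathrm{supp}\,f\subset\{|x|\ge 1\}$ and $1-\chi_{\ge a}$ cuts to $|x|\le 11a/10$, the mismatch Lemma \ref{lem:mismatch} bounds the second summand's contribution by $O(2^{-Mk})\|P_{2^k}f\|_{L^2}$ for any $M$. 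For the principal piece, unfold $\mathcal F(\chi_{\ge a}|x|^\beta P_{2^k}f)$ via the radial Fourier formula and split both $J(\rho r)-K(\rho r)$ and $J(\pm\rho s)$ by Corollaries \ref{cor:Jr-osc1}--\ref{cor:Jr-osc2} into their oscillatory parts (with angles $\ge \pi/6$) plus remainders of size $O(\langle\rho r\rangle^{-5})$ and $O(\langle\rho s\rangle^{-10})$. In the main oscillatory regime the $\rho$-phase is $2\pi\rho(r\sin\theta - s\sin\phi)$, whose magnitude is bounded below by a positive constant when $r\le 11/40$ and $s\ge a$, since $|\sin\theta|,|\sin\phi|\ge 1/2$ makes $|s\sin\phi|\ge a/2 > r\ge |r\sin\theta|$. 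Two integrations by parts in $\rho$ then produce the $2^{-2k}$ gain, with the two derivatives falling on $\chi_{\ge 2^{k-1}}(\rho)\rho^{d-1}$ (supported on a dyadic scale $\sim 2^k$); the resulting Fourier integral operator is $L^2$-bounded by Lemma \ref{lem:L2-estimate-FIO}, and Plancherel in $s$ recovers the $\|P_{2^k}f\|_{L^2}$ bound.

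The main obstacle will be lifting this $L^2$-bound to the $H^{\mu(d)}$-bound with $\mu(d)\in\{2,3\}$. The $r$-derivatives landing on $J-K$ produce extra $\rho$ factors which must be absorbed by further $\rho$-integration by parts, while those landing on $r^{-\beta}$ produce $r^{-1}$ singularities which remain integrable against the radial weight $r^{(d-1)/2}$ built into $\|\cdot\|_{L^2_x(\mathbb R^d)}$ for radial functions. The case $d=5$ (where $\beta=0$ and three $r$-derivatives are needed) is the most delicate and mirrors the threefold case analysis around displays \eqref{17.07} and \eqref{20.13} in the proof of Proposition \ref{prop:f++-highfreq}; however, the spatial support separation forced here by $\chi_{\le 1/4}$ and $\mathrm{supp}\,f\subset\{|x|\ge 1\}$ guarantees that all occurring phases remain non-stationary, so no new difficulty beyond that already handled in Proposition \ref{prop:f++-highfreq} arises.
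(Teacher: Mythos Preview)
Your overall strategy matches the paper's: exploit the spatial separation $r\le 1/4$, $s\gtrsim 1$ to make the $\rho$-phase $2\pi\rho(r\sin\theta - s\sin\theta')$ non-stationary, then integrate by parts in $\rho$. Your reduction of the first estimate to the dyadic one via Proposition~\ref{prop:f++-highfreq} and summation is exactly what the paper does, and your explicit use of the mismatch lemma to localize to $s\ge a$ is a clean way to justify a step the paper leaves implicit.

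There is, however, a genuine gap in the core step. You assert that after two integrations by parts in $\rho$ the derivatives landing on $\chi_{\ge 2^{k-1}}(\rho)\,\rho^{d-1}$ leave an amplitude ``supported on a dyadic scale $\sim 2^k$'', and then invoke Lemma~\ref{lem:L2-estimate-FIO}. This is false: the Leibniz term in which both $\rho$-derivatives hit $\rho^{d-1}$ produces $(d-1)(d-2)\,\chi_{\ge 2^{k-1}}(\rho)\,\rho^{d-3}$, which keeps the full half-line support $\rho\ge 2^{k-1}$. Without compact $\rho$-support Lemma~\ref{lem:L2-estimate-FIO} is inapplicable, and for $d=3,4,5$ the bare integral $\int_{2^{k-1}}^\infty \rho^{d-3}\,d\rho$ diverges, so two integrations by parts are not enough to close. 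The paper instead splits according to $\chi_{\le 1}(\rho r)$ versus $\chi_{\ge 1}(\rho r)$ and, in the former regime, integrates by parts five times while also using $r^2\chi_{\le 1}(\rho r)\le \rho^{-2}$; this yields an integrand $\lesssim \chi_{\ge 2^{k-1}}(\rho)\,\rho^{d-8}\,s^{\beta+d-6}$, whose $\rho$-integral over $[2^{k-1},\infty)$ is now absolutely convergent and delivers the stated decay (the worst piece gives $2^{(d-9)k/2}$, which equals $2^{-2k}$ exactly at $d=5$). In the $\chi_{\ge 1}(\rho r)$ regime the decay of $J-K$ from Lemma~\ref{lem:Jr} supplies the needed $\rho$-smallness without IBP. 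Your scheme is salvageable by doing enough integrations by parts; the error is only in the specific count and the dyadic-support claim.
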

\begin{proof}
We only consider the estimates on the outgoing part $f_{out}$, because the ones on the incoming part can be proved in a similar way.
Using the Littlewood-Paley decomposition, we write
$$
P_{\ge 1}f=\sum\limits_{k=0}^{+\infty} P_{2^k}f.
$$
Then
\begin{align*}
\big\|\chi_{\le \frac 14} \big(P_{\ge 1}f\big)_{out/in}\big\|_{H^{\mu(d)}(\R^d)}
\lesssim
\sum\limits_{k=0}^{+\infty}\big\|\chi_{\le \frac 14} \big(P_{2^k}f\big)_{out}\big\|_{H^{\mu(d)}(\R^d)}.
\end{align*}
Using Proposition \ref{prop:f++-highfreq}, we have 
$$
\big(P_{2^k}f\big)_{out}= \big(P_{2^k}f\big)_{out,\ge k-1}+h, \quad \mbox{with }\quad \|h\|_{H^{\mu(d)}(\R^d)}\lesssim 2^{-k}\big\|P_{2^k}f\big\|_{H^{-10}(\R^d)}.
$$
Hence, 
\begin{align*}
\big\|\chi_{\le \frac 14} \big(P_{2^k}f\big)_{out}\big\|_{H^{\mu(d)}(\R^d)}
\lesssim
\big\|\chi_{\le \frac 14} \big(P_{2^k}f\big)_{out,\ge k-1}\big\|_{H^{\mu(d)}(\R^d)}+
\big\|P_{2^k}f\big\|_{H^{-10}(\R^d)}.
\end{align*}
Therefore, we only need to consider $\big\|\chi_{\le \frac 14} \big(P_{2^k}f\big)_{out,\ge k-1}\big\|_{H^{\mu(d)}(\R^d)}$. We first consider the $L^2$-norm,
which is equal to
$$
\big\|r^{\frac{d-1}{2}}\chi_{\le \frac14}(r)\big(P_{2^k}f\big)_{out,\ge k-1}\big\|_{L^2_r}.
$$
From the definition, we have
\begin{align*}
r^{\frac{d-1}{2}}\big(P_{2^k}f\big)_{out,\ge k-1}
=& r^2\int_0^{+\infty}\big(J(\rho r)-K(\rho r)\big)\chi_{\ge 2^{k-1}}(\rho) \rho^{d-1}\mathcal F(P_{2^k}f)(\rho)\,d\rho.
\end{align*}
As in the proof of the previous proposition, we split it into the following two parts,
\begin{subequations}
\begin{align}
r^2\int_0^{+\infty}\big(J(\rho r)-K(\rho r)\big)\chi_{\le 1}(\rho r)\chi_{\ge 2^{k-1}}(\rho) \rho^{d-1}\mathcal F(P_{2^k}f)(\rho)\,d\rho;\label{0.21}
\end{align}
and
\begin{align}
r^2\int_0^{+\infty}\big(J(\rho r)-K(\rho r)\big)\chi_{\ge 1}(\rho r)\chi_{\ge 2^{k-1}}(\rho) \rho^{d-1}\mathcal F(P_{2^k}f)(\rho)\,d\rho.\label{0.22}
\end{align}
\end{subequations}

For \eqref{0.21}, using the definition of $\mathcal F(P_{2^k}f)$, we rewrite it as
\begin{align*}
r^2\int_0^{+\infty} J(\rho r)\chi_{\le 1}(\rho r)\chi_{\ge 2^{k-1}}(\rho) \rho^{d-1}\int_0^{+\infty}\!\!\!\Big(J(\rho s)+J(-\rho s)\Big)s^{\beta+d-1}P_{2^k}f(s)\,ds d\rho.
\end{align*}
Using \eqref{1.03} and \eqref{def-J}, we split \eqref{0.21} into the following two subparts again,
\begin{subequations}\label{0.44-45}
\renewcommand{\theequation}
{\theparentequation-\arabic{equation}}
\begin{align}
r^2\int_0^{+\infty}\!\!\!& \int_0^{\frac\pi2}e^{2\pi i \rho r\sin \theta}\cos^{d-2} \theta  \,d\theta\,\chi_{\le 1}(\rho r)\chi_{\ge 2^{k-1}}(\rho) \rho^{d-1}\notag\\
&\cdot\int_0^{+\infty}\!\!\!\int_{-\frac\pi2}^{\frac\pi2}e^{-2\pi i \rho s\sin \theta'}\chi_{\ge \frac\pi6}(\theta')\cos^{d-2} \theta'  \,d\theta' s^{\beta+d-1}P_{2^k}f(s)\,ds d\rho,\label{0.44}
\end{align}
and
\begin{align}
r^2\int_0^{+\infty}& J(\rho r)\chi_{\le 1}(\rho r)\chi_{\ge 2^{k-1}}(\rho) \rho^{d-1}
\int_0^{+\infty}\!\!\!O\big( \langle\rho s\rangle^{-10}\big) s^{\beta+d-1}P_{2^k}f(s)\,ds d\rho.\label{0.45}
\end{align}
\end{subequations}
The term \eqref{0.44} can be rewritten as
\begin{align*}
r^2\int_0^{\frac\pi2}\int_{-\frac\pi2}^{\frac\pi2}
\int_0^{+\infty}\!\!\!\int_0^{+\infty}\!\!\!& e^{2\pi i \rho (r\sin \theta-s\sin \theta')}\chi_{\le 1}(\rho r)\chi_{\ge 2^{k-1}}(\rho) \rho^{d-1}s^{\beta+d-1}P_{2^k}f(s)\,ds d\rho\notag\\
&\cdot\chi_{\ge \frac\pi6}(\theta')  \cos^{d-2} \theta \cos^{d-2} \theta' \,d\theta' d\theta.
\end{align*}
When $r\le \frac13,s\ge \frac{9}{10}$, $|\sin \theta'|\ge \frac25$, we have
\begin{align}\label{7.12-11.32}
|r\sin \theta-s\sin \theta'|\gtrsim  r+s.
\end{align}
Then using the formula,
\begin{align}\label{phase-def}
e^{2\pi i  \rho (r\sin \theta-s\sin \theta')}=\frac{1}{2\pi i (r\sin \theta-s\sin \theta')}\partial_\rho\Big(e^{2\pi i  \rho (r\sin \theta-s\sin \theta')}\Big),
\end{align}
and  integration-by-parts 5 times, \eqref{0.44} turns to
\begin{align}
r^2& \int_0^{\frac\pi2}\!\!\!\int_{-\frac\pi2}^{\frac\pi2}\!\!\int_0^{+\infty}\!\!\! \int_0^{+\infty}\!\!\! \frac{e^{2\pi i  \rho (r\sin \theta-s\sin \theta')}}{\left[2\pi i (r\sin \theta-s\sin \theta')\right]^{5}}\partial_\rho^{5}\Big[\chi_{\le 1}(\rho r)\chi_{\ge 2^{k-1}}(\rho) \rho^{d-1}\Big]
\notag\\
&\quad\cdot   s^{\beta+d-1}P_{2^k}f(s)\,d\rho ds \>\chi_{\ge \frac\pi6}(\theta')\> \cos^{d-2} \theta\cos^{d-2} \theta'  d\theta' d\theta.
\label{7.12-11.33}
\end{align}
Note that $r\chi_{\le 1}'(\rho r) \lesssim \rho^{-1}$, we have that 
$$
\Big|\partial_\rho^{5}\Big[\chi_{\le 1}(\rho r)\chi_{\ge 2^{k-1}}(\rho) \rho^{d-1}\Big]\Big|
\lesssim
\chi_{\le 1}(\rho r)\chi_{\ge 2^{k-1}}(\rho) \rho^{d-6}.
$$
Here and in the following, for simplicity, we regard $\chi_{\ge 1}$ and its derivatives as the same.
Then this last estimate combining with \eqref{7.12-11.32} and \eqref{7.12-11.33},  gives the bound of \eqref{0.44} as
\begin{align*}
& \int_0^{+\infty}\!\!\! \int_0^{+\infty}\!\!\! \chi_{\le 1}(\rho r)\chi_{\ge 2^{k-1}}(\rho) \rho^{d-8}s^{\beta+d-6}|P_{2^k}f(s)|\,d\rho ds.
\end{align*}
Therefore,
$$
\|\eqref{0.44}\|_{L^2_r(\{r\le \frac14\})}\lesssim 2^{(d-\frac{15}{2})k}\|s^{\frac{d-1}{2}}P_{2^k}f\|_{L^2_s}
\lesssim
2^{(d-7)k}\big\|P_{2^k}f\big\|_{L^2(\R^d)}.
$$
Note that $|J(\rho r)|\lesssim 1$, thus \eqref{0.45} is also bounded by
\begin{align*}
& \int_0^{+\infty}\!\!\! \int_0^{+\infty}\!\!\! \chi_{\le 1}(\rho r)\chi_{\ge 2^{k-1}}(\rho) \langle\rho s\rangle^{-10}\rho^{d-3}s^{\beta+d-1}|P_{2^k}f(s)|\,d\rho ds.
\end{align*}
Hence, we get
$$
\|\eqref{0.45}\|_{L^2_r(\{r\le \frac14\})}\lesssim
2^{(d-12)k}\big\|P_{2^k}f\big\|_{L^2(\R^d)}.
$$
Combining these two estimates on \eqref{0.44-45}, we obtain that
\begin{align}
\|\eqref{0.21}\|_{L^2_r(\{r\le \frac14\})}\lesssim
2^{-2k}\big\|P_{2^k}f\big\|_{L^2(\R^d)}.\label{est:0.21}
\end{align}

Now we consider \eqref{0.22}, which is equal to
\begin{align*}
r^2&\int_0^{+\infty}  \big(J(\rho r)-K(\rho r)\big)\chi_{\ge 1}(\rho r)\chi_{\ge 2^{k-1}}(\rho) \rho^{d-1}\\
&\quad\cdot\int_0^{+\infty}\big(J(\rho s)+J(-\rho s)\big)s^{\beta+d-1}P_{2^k}f(s)\,ds d\rho .
\end{align*}
Using \eqref{1.03} again, we split \eqref{0.22} into the following two subparts again,
\begin{subequations}\label{0.46-47}
\renewcommand{\theequation}
{\theparentequation-\arabic{equation}}
\begin{align}
r^2\int_0^{+\infty}\!\!\!& \big(J(\rho r)-K(\rho r)\big)\chi_{\ge 1}(\rho r)\chi_{\ge 2^{k-1}}(\rho) \rho^{d-1}\notag\\
&\cdot\int_0^{+\infty}\!\!\!\int_{-\frac\pi2}^{\frac\pi2}e^{-2\pi i \rho s\sin \theta'}\chi_{\ge \frac\pi6}(\theta')\cos^{d-2} \theta'  \,d\theta' s^{\beta+d-1}P_{2^k}f(s)\,ds d\rho,\label{0.46}
\end{align}
and
\begin{align}
r^2\int_0^{+\infty}\!\!\!& \big(J(\rho r)-K(\rho r)\big)\chi_{\ge 1}(\rho r)\chi_{\ge 2^{k-1}}(\rho) \rho^{d-1}
\int_0^{+\infty}\!\!\!O\big( \langle\rho s\rangle^{-10}\big) s^{\beta+d-1}P_{2^k}f(s)\,ds d\rho.\label{0.47}
\end{align}
\end{subequations}
For \eqref{0.46},  using the formula in \eqref{20.14III}, we can  write
\begin{align}
\chi_{\ge 1}(\rho r)\Big(J(\rho r)&-K(\rho r)\Big)=\chi_{\ge 1}(\rho r)\int_0^{\frac\pi2} e^{2\pi i  \rho r\sin \theta}\eta(\theta,\rho r) \,d\theta, \label{20.14IV}
\end{align}
where the function $\eta(\theta,\rho r)$ is defined as
\begin{align*}
\eta(\theta,\rho r)=\chi_{\ge \frac\pi6}(\theta)\cos^{d-2} \theta+ \chi_{ 1\le \cdot \le 2}(\rho r) \chi_{\le \frac\pi6}(\theta) \cos^{d-2}\theta +\frac{\chi_{\ge 2}(\rho r)}{(2\pi i\rho r)^5}\tilde\eta_d(\theta).
\end{align*}
Then from \eqref{20.14IV},  the term \eqref{0.46} can be rewritten as
\begin{align*}
r^2\int_0^{\frac\pi2}\int_{-\frac\pi2}^{\frac\pi2}
\int_0^{+\infty}\!\!\!\int_0^{+\infty}\!\!\!& e^{2\pi i \rho (r\sin \theta-s\sin \theta')}\chi_{\ge 1}(\rho r)\chi_{\ge 2^{k-1}}(\rho)\eta(\theta,\rho r) \notag\\
&\cdot\rho^{d-1}s^{\beta+d-1}P_{2^k}f(s)\,ds d\rho\>\chi_{\ge \frac\pi6}(\theta')  \cos^{d-2} \theta' \,d\theta' d\theta.
\end{align*}
Now using \eqref{7.12-11.32} and treating similarly as \eqref{0.44}, we obtain that
$$
\|\eqref{0.46}\|_{L^2_r(\{r\le \frac14\})}\lesssim
2^{(d-12)k}\big\|P_{2^k}f\big\|_{L^2(\R^d)}.
$$
For \eqref{0.47}, applying \eqref{1.02-2}, we have that
\begin{align}
\chi_{\ge 1}(\rho r)\Big(J(\rho r)&-K(\rho r)\Big)=O\big((\rho r )^{-\frac{d-1}2}\big).
\end{align}
Hence,
\begin{align*}
|\eqref{0.47}|\lesssim  r^{2-\frac{d-1}2}\int_0^{+\infty}\!\!\! \int_0^{+\infty}\!\!\!  \chi_{\ge 2^{k-1}}(\rho) \langle\rho s\rangle^{-10}\rho^{\frac{d-1}2}s^{\beta+d-1}|P_{2^k}f(s)|\,d\rho ds.
\end{align*}
This gives that 
$$
\|\eqref{0.47}\|_{L^2_r(\{r\le \frac14\})}\lesssim
2^{\frac{d-19}{2}k}\big\|P_{2^k}f\big\|_{L^2(\R^d)}.
$$
Therefore, we obtain that
$$
\|\eqref{0.22}\|_{L^2_r(\{r\le \frac14\})}\lesssim
2^{\frac{d-19}{2}k}\big\|P_{2^k}f\big\|_{L^2(\R^d)}.
$$
Combining the estimates above, we obtain that
\begin{align}
\big\|\chi_{\le \frac 14} \big(P_{2^k}f\big)_{out,\ge k-1}\big\|_{L^2(\R^d)}
\lesssim
2^{-2k}\big\|P_{2^k}f\big\|_{L^2(\R^d)}. \label{est:supportf+}
\end{align}
Now similar argument (as in the proof of Proposition \ref{prop:f++-highfreq}) can be used to treat the functions in the high-order derivatives. Thus we obtain the desired estimate in  $H^{\mu(d)}(\R^d)$ space, and prove the lemma.
\end{proof}


\subsection{Boundedness of the incoming/outgoing projection operators}

The main results in this section are the boundedness of incoming/outgoing projection operator in $L^2(\R^d)$.
\begin{prop}
\label{prop:bound-fpm-L2} Suppose that $f\in L^2(\R^d)$, then for any $k\in \Z^+$,
\begin{align*}
\|f_{out/in,k}\|_{L^2(\R^d)}&\lesssim \|f\|_{L^2(\R^d)}.
\end{align*}
Here the implicit constant is independent of $k$.
\end{prop}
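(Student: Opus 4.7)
The plan is to reduce the bound to the scale-invariant case $k=0$ by a parabolic dilation and then estimate the resulting kernel operator by combining oscillatory integral analysis with the Fourier integral operator estimate of Lemma~\ref{lem:L2-estimate-FIO}.

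First, I would verify the scaling reduction. Under the $L^2$-preserving dilation $f_\lambda(x) = \lambda^{d/2} f(\lambda x)$ with $\lambda = 2^k$, a direct change of variables $\rho = 2^k \rho'$ and $r = 2^{-k} r'$ in the defining integral for $f_{out/in,k}$ gives $T_k f_\lambda(r'/2^k) = 2^{kd/2} T_0 f(r')$, where $T_k f := f_{out/in,k}$, so that $\|T_k f_\lambda\|_{L^2(\R^d)} = \|T_0 f\|_{L^2(\R^d)}$. Since the dilation preserves the $L^2$ norm, it suffices to prove $\|T_0 f\|_{L^2} \lesssim \|f\|_{L^2}$, where the frequency cutoff becomes $\chi_1$ supported in $\rho \in [1,2]$.

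Substituting the radial representation of $\mathcal{F}f$ in terms of $f$ and applying Fubini, $T_0$ becomes the kernel operator $T_0 f(r) = \int_0^\infty \mathcal{K}(r,s) f(s) s^{d-1}\,ds$ with
$$\mathcal{K}(r,s) = r^{-\beta}s^\beta \int_1^2 \bigl(J(\rho r)-K(\rho r)\bigr)\bigl(J(\rho s)+J(-\rho s)\bigr) \rho^{d-1}\,d\rho.$$
By Corollary~\ref{cor:Jr-osc1} and formula \eqref{1.03} of Corollary~\ref{cor:Jr-osc2}, each Bessel-type factor splits into a principal oscillatory piece of the form $\int_0^{\pi/2} e^{\pm 2\pi i \rho\,\cdot\, \sin\theta} \chi_{\geq \pi/6}(\theta)\cos^{d-2}\theta\,d\theta$ for large argument, plus a rapidly decaying $O(\langle\cdot\rangle^{-5})$ remainder, together with a uniformly bounded contribution when the argument is $\lesssim 1$. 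The near-field region $r,s \lesssim 1$ yields a Hilbert--Schmidt kernel since $-d < 2\beta < d$ for $\beta = (d-5)/2$ when $d \geq 3$. The remainder pieces have kernel decay $\langle r\rangle^{-5-(d-1)/2}\langle s\rangle^{-(d-1)/2}$ (and symmetrically in $s$), which is comfortably handled by Schur's test.

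The main contribution to $\mathcal{K}(r,s)$ is the fully oscillatory piece,
$$r^{-\beta}s^\beta \int_{[\pi/6,\pi/2]^2} \cos^{d-2}\theta\cos^{d-2}\theta' \int_1^2 e^{2\pi i\rho(\pm r\sin\theta \pm s\sin\theta')} \rho^{d-1}\,d\rho\,d\theta\,d\theta',$$
which I would analyze as a Fourier integral operator with phase $\Phi(r,s;\theta,\theta') = \pm r\sin\theta \pm s\sin\theta'$ and smooth amplitude supported in a compact region. The mixed Hessian entries $\partial^2\Phi/\partial r\partial\theta = \pm\cos\theta$ and $\partial^2\Phi/\partial s\partial\theta' = \pm\cos\theta'$ vanish only at $\theta,\theta' = \pi/2$, where the amplitude itself vanishes to order $d-2$ through the factor $\cos^{d-2}\theta\cos^{d-2}\theta'$, so Lemma~\ref{lem:L2-estimate-FIO} applies after a standard regularization. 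The inner $\rho$-integration over the compact interval $[1,2]$ contributes a bounded factor via Plancherel.

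The chief difficulty lies in reconciling the weights $r^{-\beta}s^\beta$ with the oscillatory kernel structure: the pointwise kernel estimate alone fails Schur's test on the main piece because of the growth introduced by these weights, so one must genuinely exploit the oscillation both in $\rho$ and in $(\theta,\theta')$. The scaling reduction is indispensable: it confines all analysis to the unit scale where the amplitudes are smooth and compactly supported, enabling the clean application of Lemma~\ref{lem:L2-estimate-FIO}, and it is what produces the uniformity of the implicit constant in $k$.
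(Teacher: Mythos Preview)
Your scaling reduction to $k=0$ is correct and is a genuine simplification that the paper does not use; the paper carries the $2^k$-dependence through every estimate explicitly. So organizationally your route is cleaner.

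The gap is in your treatment of the main oscillatory piece at unit scale. You propose to apply Lemma~\ref{lem:L2-estimate-FIO} with phase $\Phi(r,s;\theta,\theta')=\pm r\sin\theta\pm s\sin\theta'$ and check the mixed Hessian entries $\partial^2\Phi/\partial r\,\partial\theta$, $\partial^2\Phi/\partial s\,\partial\theta'$. But that Hessian is the one relevant for an FIO mapping $L^2_{(\theta,\theta')}\to L^2_{(r,s)}$, not for the operator you need to bound, which maps $L^2_s\to L^2_r$. Viewed as a one-dimensional operator in $(r,s)$, the phase is \emph{separable}: $\partial^2\Phi/\partial r\,\partial s=0$, so Lemma~\ref{lem:L2-estimate-FIO} cannot be invoked directly in that form. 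A related issue is that the amplitude is not compactly supported in $r$ or $s$ in the far-field region, which is another hypothesis of that lemma.

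What the paper actually does, and what is needed here, is to factor through the $\rho$-variable: use the asymptotics of Lemma~\ref{lem:Jr} to replace the $\theta$- and $\theta'$-integrals by $e^{\pm 2\pi i\rho r}(\rho r)^{-(d-1)/2}$ and $e^{\pm 2\pi i\rho s}(\rho s)^{-(d-1)/2}$ plus lower-order remainders. The powers $(\rho r)^{-(d-1)/2}(\rho s)^{-(d-1)/2}$ exactly absorb the weights $r^{-\beta}s^{\beta}$ together with the radial measure, leaving phases $2\pi\rho(\pm r\pm s)$ that are genuinely non-degenerate in the 1D sense. One then applies Lemma~\ref{lem:L2-estimate-FIO} twice, once for $s\mapsto\rho$ and once for $\rho\mapsto r$, after a dyadic decomposition $r\sim 2^j$ to localize to $s\sim r$; the off-diagonal contributions $|r-s|\gtrsim r+s$ are dispatched by integration by parts in $\rho$ (this is the step you are missing when you say ``Schur's test fails and one must exploit oscillation in $(\theta,\theta')$''---the oscillation to exploit is in $\rho$, not in the angular variables).
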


\begin{remark}
We remark here that we have a slightly stronger estimate read as
\begin{align*}
\|f_{out/in}\|_{L^2(\R^d)}&\lesssim \|f\|_{L^2(\R^d)}.
\end{align*}
The proof follows from the similar argument as in the proof of Proposition \ref{prop:bound-fpm-L2} below, with a further spatial cut-off on the function $f$.
However, the $\log$-loss is not essential in this paper, so to simplify the proof we only present a slightly weaker estimate  as in Proposition \ref{prop:bound-fpm-L2}.
\end{remark}

\begin{proof}[Proof of Proposition \ref{prop:bound-fpm-L2}]
We only consider the estimates on $f_{out,k}$. Since $f_{out,k}$ is radial, we have
\begin{align*}
\|f_{out,k}\|_{L^2(\R^d)}=&c\big\|r^{\frac{d-1}{2}}f_{out,k}(r)\big\|_{L^2_r}.
\end{align*}
By Definition \ref{def:outgong-incoming}, we write
\begin{subequations}\label{small-large}
\begin{align}
r^{\frac{d-1}{2}}f_{out,k}(r)
=&r^2\int_0^{+\infty} \big(J(\rho r)-K(\rho r)\big) \chi_{2^k}(\rho) \rho^{d-1}\mathcal Ff(\rho)\,d\rho\notag\\
=& r^2\int_0^{+\infty} J(\rho r)\chi_{\le 1}(\rho r) \chi_{2^k}(\rho) \rho^{d-1}\mathcal Ff(\rho)\,d\rho\label{Smaller1}\\
&+
r^2\int_0^{+\infty} \big(J(\rho r)-K(\rho r)\big)\chi_{\ge 1}(\rho r) \chi_{2^k}(\rho) \rho^{d-1}\mathcal Ff(\rho)\,d\rho.\label{Larger1}
\end{align}
\end{subequations}

\noindent{\bf Estimates on \eqref{Smaller1}.}  Note that 
\begin{align*}
\eqref{Smaller1}=r^2\int_0^{+\infty} J(\rho r)\chi_{\le 1}(\rho r) \chi_{2^k}(\rho) \rho^{d-1}\int_0^{+\infty}\Big(J(\rho s)+J(-\rho s)\Big)s^{\beta+d-1}f(s)\,ds d\rho .
\end{align*}
Accordingly, we further split it into the following two subparts again,
\begin{subequations}
\renewcommand{\theequation}
{\theparentequation-\arabic{equation}}
\begin{align}
r^2\int_0^{+\infty}\!\! J(\rho r)\chi_{\le 1}(\rho r) \chi_{2^k}(\rho) \rho^{d-1}\int_0^{+\infty}\!\!\Big(J(\rho s)+J(-\rho s)\Big)\chi_{\le 10}(\rho s)s^{\beta+d-1}f(s)\,ds d\rho;\label{S1}
\end{align}
and
\begin{align}
r^2\int_0^{+\infty}\!\! J(\rho r)\chi_{\le 1}(\rho r) \chi_{2^k}(\rho) \rho^{d-1}\int_0^{+\infty}\!\!\Big(J(\rho s)+J(-\rho s)\Big)\chi_{\ge 10}(\rho s)s^{\beta+d-1}f(s)\,ds d\rho .\label{S2}
\end{align}
\end{subequations}
For \eqref{S1}, noting that $|J(\rho r)|, |J(\rho s)|\lesssim 1$, we have that
$$
|\eqref{S1}|
\lesssim 
r^2\int_0^{+\infty}\!\!\int_0^{+\infty}\!\! \chi_{\le 1}(\rho r) \chi_{2^k}(\rho) \rho^{d-1}\chi_{\le 10}(\rho s)s^{\beta+d-1}\big|f(s)\big|\,ds d\rho. 
$$
Thus, 
\begin{align}
\|\eqref{S1}\|_{L^2_r}
&\lesssim
\int_0^{+\infty}\!\!\int_0^{+\infty}\!\! \big\|r^2 \chi_{\le 1}(\rho r)\big\|_{L^2_r} \chi_{2^k}(\rho)\rho^{d-1}\chi_{\le 10}(\rho s)s^{\beta+d-1}|f(s)|\,ds d\rho\notag\\
&\lesssim
\int_0^{+\infty}\!\! \chi_{2^k}(\rho) \rho^{d-\frac72}\big\|\chi_{\le 10}(\rho s)s^{d-3}\big\|_{L^2_s}\big\|s^{\frac{d-1}{2}}f(s)\big\|_{L^2_s} d\rho\notag\\
&\lesssim
\int_0^{+\infty}\!\!\rho^{-1} \chi_{2^k}(\rho) \,d\rho \>\big\|s^{\frac{d-1}{2}}f(s)\big\|_{L^2_s}
\lesssim \|f\|_{L^2(\R^d)}.\label{est:S1}
\end{align}
Now  we estimate \eqref{S2}.
From \eqref{1.03}, we write 
\begin{subequations}
\renewcommand{\theequation}
{\theparentequation-\roman{equation}}
\begin{align}
 \eqref{S2}=r^2\int_0^{+\infty} &\!\! J(\rho r)\chi_{\le 1}(\rho r) \chi_{2^k}(\rho) \rho^{d-1}\int_0^{+\infty}\!\!\int_{-\frac\pi2}^{\frac\pi2}e^{-2\pi i  \rho s\sin \theta}\notag\\
&\cdot\chi_{\ge \frac\pi6}(\theta)\cos^{d-2} \theta  \,d\theta\chi_{\ge 10}(\rho s)s^{\beta+d-1}f(s)\,ds d\rho\label{S21}\\
+r^2\int_0^{+\infty}\!\! & J(\rho r)\chi_{\le 1}(\rho r) \chi_{2^k}(\rho) \rho^{d-1}\int_0^{+\infty}\!\!O\big((\rho s)^{-10}\big)\chi_{\ge 10}(\rho s)s^{\beta+d-1}f(s)\,ds d\rho.\label{S23}
\end{align}
\end{subequations}
The terms \eqref{S21} is equal to
\begin{align}
r^2\int_0^\frac\pi2\int_{-\frac\pi2}^{\frac\pi2}&\int_0^{+\infty}\!\!\int_0^{+\infty}\!\!e^{2\pi i\rho(-s\sin \theta'+r\sin \theta)}\chi_{\le 1}(\rho r) \chi_{2^k}(\rho)\chi_{\ge 10}(\rho s)\notag\\
&\cdot\rho^{d-1}s^{\beta+d-1}f(s) \,d\rho ds\> \chi_{\ge \frac\pi6}(\theta')\cos^{d-2} \theta' \cos^{d-2}\theta \,d\theta' d\theta.
\end{align}
Note that the power $\beta+d-1$ is too large to be integrable in $s$, 
but we can use integration-by-parts to decrease the power of $s$, due to the non-resonance of the phase.
Indeed, when $|\theta'|\ge \frac\pi7$, $s\ge 5r$, we have \eqref{7.12-11.32}. Thus using the formula \eqref{phase-def}
and integration-by-parts 10 times,  \eqref{S21} can be controlled by
$$
r^2\int_0^{+\infty}\!\!\int_0^{+\infty}\!\!\chi_{\lesssim 1}(\rho r) \chi_{2^k}(\rho)\chi_{\gtrsim 1}(\rho s)\rho^{d-11}s^{\beta+d-11}|f(s)| \,d\rho ds.
$$
Therefore, we obtain
$$
\|\eqref{S21}\|_{L^2_r}\lesssim \|f\|_{L^2(\R^d)}.
$$
The term \eqref{S23} can be controlled by
\begin{align*}
\int_0^{+\infty}\!\! \int_0^{+\infty}\!\!\chi_{\le 1}(\rho r)\chi_{\ge 10}(\rho s) \chi_{2^k}(\rho) \rho^{d-13}s^{d-13}\cdot s^{\frac{d-1}{2}}|f(s)|\,ds d\rho.
\end{align*}
Hence, similar as the estimate on \eqref{S1}, we have
$$
\|\eqref{S23}\|_{L^2_r}\lesssim \|f\|_{L^2(\R^d)}.
$$
Combining this last estimate  with the estimate on \eqref{S21}, we get
$$
\|\eqref{S2}\|_{L^2_r}\lesssim \|f\|_{L^2(\R^d)}.
$$
This together with \eqref{est:S1}, gives
\begin{align}
\|\eqref{Smaller1}\|_{L^2_r}\lesssim \|f\|_{L^2(\R^d)}.\label{est:Small1}
\end{align}

\noindent {\bf Estimates on  \eqref{Larger1}.}  Note that 
\begin{align*}
 \eqref{Larger1}=r^2\int_0^{+\infty} &  \big(J(\rho r)-K(\rho r)\big)\chi_{\ge 1}(\rho r) \chi_{2^k}(\rho) \\
& \cdot\rho^{d-1}\int_0^{+\infty}\Big(J(\rho s)+J(-\rho s)\Big)s^{\beta+d-1}f(s)\,ds d\rho .
\end{align*}
Note that for $J(\rho r)-K(\rho r)$, we have the equality given in \eqref{1.02}; and for $J(\rho s)+J(-\rho s)$ we have the equality given in  \eqref{1.03}.
Then we  divide \eqref{Larger1} into the following four parts. The first part is
\begin{subequations}
\renewcommand{\theequation}
{\theparentequation-\arabic{equation}}
\begin{align}
r^2&\int_0^{+\infty}\!\!\!  \int_0^{\frac\pi2} e^{2\pi i  \rho r\sin \theta}\chi_{\ge \frac\pi6}(\theta)\cos^{d-2} \theta  \,d\theta\>
\chi_{\ge 1}(\rho r) \chi_{2^k}(\rho) \rho^{d-1}\notag\\
&\cdot\int_0^{+\infty}\!\!\! \int_{-\frac\pi2}^{\frac\pi2}e^{-2\pi i  \rho s\sin \theta'}\chi_{\ge \frac\pi6}(\theta')\cos^{d-2} \theta'  \,d\theta' s^{\beta+d-1}f(s)\,ds d\rho;\label{22.08}
\end{align}
the second part is
\begin{align}
r^2 &\int_0^{+\infty}\!\!\!  \int_0^{\frac\pi2} e^{2\pi i  \rho r\sin \theta}\chi_{\ge \frac\pi6}(\theta)\cos^{d-2} \theta  \,d\theta\>\chi_{\ge 1}(\rho r) \chi_{2^k}(\rho) \rho^{d-1}\notag\\
&\cdot\int_0^{+\infty}O\big( \langle\rho s\rangle^{-10}\big) s^{\beta+d-1}f(s)\,ds d\rho;\label{22.09}
\end{align}
the third part is
\begin{align}
 r^2 &\int_0^{+\infty} O\big( (\rho r)^{-5}\big)\chi_{\ge 1}(\rho r) \chi_{2^k}(\rho) \rho^{d-1}\notag\\
&\cdot\int_0^{+\infty}\!\!\! \int_{-\frac\pi2}^{\frac\pi2}e^{-2\pi i  \rho s\sin \theta'}\chi_{\ge \frac\pi6}(\theta')\cos^{d-2} \theta'  \,d\theta's^{\beta+d-1}f(s)\,ds d\rho;\label{22.10}
\end{align}
and the fourth part is
\begin{align}
 r^2 &\int_0^{+\infty}O\big( (\rho r)^{-5}\big)\chi_{\ge 1}(\rho r) \chi_{2^k}(\rho) \rho^{d-1}\int_0^{+\infty}O\big(\langle\rho s\rangle^{-10}\big)s^{\beta+d-1}f(s)\,ds d\rho.\label{22.11}
\end{align}
\end{subequations}

\noindent Estimate on \eqref{22.08}.  We split  \eqref{22.08} into the following two subparts again,
\begin{subequations}
\renewcommand{\theequation}
{\theparentequation-\roman{equation}}
\begin{align}
r^2&\int_0^{+\infty}\!\!\!  \int_0^{\frac\pi2} e^{2\pi i  \rho r\sin \theta}\chi_{\ge \frac\pi6}(\theta)\cos^{d-2} \theta  \,d\theta\>
\chi_{\ge 1}(\rho r) \chi_{2^k}(\rho) \rho^{d-1}\notag\\
&\cdot\int_0^{+\infty}\!\!\! \int_{-\frac\pi2}^{\frac\pi2}e^{-2\pi i  \rho s\sin \theta'}\chi_{\ge \frac\pi6}(\theta')\cos^{d-2} \theta'  \,d\theta' \big(\chi_{\le \frac{1}{10}r}(s)+\chi_{\ge 10r}(s)\big) s^{\beta+d-1}f(s)\,ds d\rho; \label{22.08I}
\end{align}
and
\begin{align}
r^2&\int_0^{+\infty}\!\!\!  \int_0^{\frac\pi2} e^{2\pi i  \rho r\sin \theta}\chi_{\ge \frac\pi6}(\theta)\cos^{d-2} \theta  \,d\theta\>
\chi_{\ge 1}(\rho r) \chi_{2^k}(\rho) \rho^{d-1}\notag\\
&\cdot\int_0^{+\infty}\!\!\! \int_{-\frac\pi2}^{\frac\pi2}e^{-2\pi i  \rho s\sin \theta'}\chi_{\ge \frac\pi6}(\theta')\cos^{d-2} \theta'  \,d\theta'\chi_{\frac1{10}r\le \cdot\le 10r}(s)s^{\beta+d-1}f(s)\,ds d\rho.\label{22.08II}
\end{align}
\end{subequations}
For \eqref{22.08I}, we rewrite it as
\begin{align*}
r^2& \int_0^{\frac\pi2}\!\!\!\int_{-\frac\pi2}^{\frac\pi2}\!\!\int_0^{+\infty}\!\!\! \int_0^{+\infty}\!\!\! e^{2\pi i  \rho (r\sin \theta-s\sin \theta')}\chi_{\ge 1}(\rho r) \chi_{2^k}(\rho) \rho^{d-1}\>\chi_{\ge \frac\pi6}(\theta)
\notag\\
&\cdot\cos^{d-2} \theta\chi_{\ge \frac\pi6}(\theta')\cos^{d-2} \theta'  \big(\chi_{\le \frac{1}{10}r}(s)+\chi_{\ge 10r}(s)\big) s^{\beta+d-1}f(s)\, d\rho ds d\theta' d\theta.
\end{align*}
Note that we have \eqref{7.12-11.32} in this situation. Thus treated similarly as the term \eqref{S21},  using the formula \eqref{phase-def}
and integration-by-parts $10$ times, 
\eqref{22.08I} has the bound of
\begin{align*}
r^2\chi_{\gtrsim 2^{-k}}(r)\int_0^{+\infty}\!\!\! \int_0^{+\infty}\!\!\!  (r+s)^{-10}\rho^{-10}\chi_{2^k}(\rho) \rho^{d-1} s^{\beta+d-1}\big|f(s)\big|\,d\rho ds.
\end{align*}
Hence,
\begin{align}
\|\eqref{22.08I}\|_{L^2_r}\lesssim   \|f\|_{L^2(\R^d)}. \label{est: 22.08I}
\end{align}
For \eqref{22.08II}, by dyadic decomposition, it is equal to
\begin{align*}
\sum\limits_{j=-k}^\infty &\chi_{2^j}(r)\cdot\eqref{22.08II}.
\end{align*}
Note that $\chi_{2^j}(r)\cdot\eqref{22.08II}$ can be written by
\begin{align}
r^2&\chi_{2^j}(r)\int_0^{+\infty}\!\!\!  \int_0^{\frac\pi2} e^{2\pi i  \rho r\sin \theta}\chi_{\ge \frac\pi6}(\theta)\cos^{d-2} \theta  \,d\theta\>
\chi_{\ge 1}(\rho r) \chi_{2^k}(\rho) \rho^{d-1}\notag\\
&\cdot\int_0^{+\infty}\!\!\! \int_{-\frac\pi2}^{\frac\pi2}\!\!\!e^{-2\pi i  \rho s\sin \theta'}\chi_{\ge \frac\pi6}(\theta')\cos^{d-2} \theta'  \,d\theta'\chi_{2^{j-2}\le \cdot\le 2^{j+2}}(s)s^{\beta+d-1}f(s)\,ds d\rho\label{23.02}\\
&\quad +\mbox{other terms}.\notag
\end{align}
Here the ``\textit{other terms}'' can be treated in the same manner as \eqref{22.08I} and thus we ignore them.
For \eqref{23.02}, from Lemma \ref{lem:Jr}, we have the following two formulas,
\begin{align*}
\int_0^{\frac\pi2} e^{2\pi i  \rho r\sin \theta}\chi_{\ge \frac\pi6}(\theta)\cos^{d-2} \theta  \,d\theta
=
c (\rho r)^{-\frac{d-1}{2}} e^{2\pi i \rho r} +O\big((\rho r)^{-\frac{d+1}{2}}\big);
\end{align*}
and
\begin{align*}
\int_{-\frac\pi2}^{\frac\pi2}\!\!\!e^{-2\pi i  \rho s\sin \theta'}\chi_{\ge \frac\pi6}(\theta')\cos^{d-2} \theta'  \,d\theta'
&=
(\rho s)^{-\frac{d-1}{2}}\Big(\bar c e^{-2\pi i \rho s}+c  e^{2\pi i\rho s}\Big) +O\big((\rho s)^{-\frac{d+1}{2}}\big).
\end{align*}
Accordingly, we divide \eqref{23.02} into four parts as \eqref{22.08}--\eqref{22.11}. The first part is
\begin{subequations}
\renewcommand{\theequation}
{\theparentequation-ii\alph{equation}}
\begin{align}
r^2&\chi_{2^j}(r)\int_0^{+\infty}\!\!\!  (\rho r)^{-\frac{d-1}{2}} e^{2\pi i \rho r}
\chi_{\ge 1}(\rho r) \chi_{2^k}(\rho) \rho^{d-1}\notag\\
&\cdot\int_0^{+\infty}\!\!\! (\rho s)^{-\frac{d-1}{2}}\Big(\bar c e^{-2\pi i \rho s}+c  e^{2\pi i\rho s}\Big)\chi_{2^{j-2}\le \cdot\le 2^{j+2}}(s)s^{\beta+d-1}f(s)\,ds d\rho;\label{23.02I}
\end{align}
the second part is
\begin{align}
r^2&\chi_{2^j}(r)\int_0^{+\infty}\!\!\!  (\rho r)^{-\frac{d-1}{2}} e^{2\pi i \rho r}
\chi_{\ge 1}(\rho r) \chi_{2^k}(\rho) \rho^{d-1}\notag\\
&\cdot\int_0^{+\infty}\!\!\! O\big((\rho s)^{-\frac{d+1}{2}}\big)\chi_{2^{j-2}\le \cdot\le 2^{j+2}}(s)s^{\beta+d-1}f(s)\,ds d\rho;\label{23.02II}
\end{align}
the third part is
\begin{align}
 r^2&\chi_{2^j}(r)\int_0^{+\infty}\!\!\!  O\big((\rho r)^{-\frac{d+1}{2}}\big)
\chi_{\ge 1}(\rho r) \chi_{2^k}(\rho) \rho^{d-1}\notag\\
&\cdot\int_0^{+\infty}\!\!\! (\rho s)^{-\frac{d-1}{2}}\Big(\bar c e^{-2\pi i \rho s}+c  e^{2\pi i\rho s}\Big)\chi_{2^{j-2}\le \cdot\le 2^{j+2}}(s)s^{\beta+d-1}f(s)\,ds d\rho;\label{23.02III}
\end{align}
and the fourth part is
\begin{align}
 r^2&\chi_{2^j}(r)\int_0^{+\infty}\!\!\!  O\big((\rho r)^{-\frac{d+1}{2}}\big)
\chi_{\ge 1}(\rho r) \chi_{2^k}(\rho) \rho^{d-1}\notag\\
&\cdot\int_0^{+\infty}\!\!\! O\big((\rho s)^{-\frac{d+1}{2}}\big)\chi_{2^{j-2}\le \cdot\le 2^{j+2}}(s)s^{\beta+d-1}f(s)\,ds d\rho.\label{23.02IV}
\end{align}
\end{subequations}
First, we consider \eqref{23.02I}, which is equivalent to
\begin{align*}
2^{(2-\frac{d-1}{2})j}&\chi_{2^j}(r)\int_0^{+\infty}\!\!\!  e^{2\pi i \rho r}
\chi_{\ge 1}(\rho r) \chi_{2^k}(\rho) \notag\\
&\cdot\int_0^{+\infty}\!\!\! \Big(\bar c e^{-2\pi i \rho s}+c  e^{2\pi i\rho s}\Big)\chi_{2^{j-2}\le \cdot\le 2^{j+2}}(s)s^{\frac{d-5}{2}}s^{\frac{d-1}{2}}f(s)\,ds d\rho.
\end{align*}
Then using Corollary \ref{cor:L2-estimate-FIO} and Plancherel's identity, we have
\begin{align}
\|\eqref{23.02I}\|_{L^2_r}
\lesssim &
2^{\frac{5-d}{2}j}\Big\|\int_0^{+\infty}\!\!\! \Big(\bar c e^{-2\pi i \rho s}+c  e^{2\pi i\rho s}\Big)\chi_{2^{j-2}\le \cdot\le 2^{j+2}}(s)s^{\frac{d-5}{2}}s^{\frac{d-1}{2}}f(s)\,ds\Big\|_{L^2_\rho}\notag\\
\lesssim &
2^{\frac{5-d}{2}j}\big\|\chi_{2^{j-2}\le \cdot\le 2^{j+2}}(s)s^{\frac{d-5}{2}}s^{\frac{d-1}{2}}f(s) \big\|_{L^2_s}
\lesssim \big\|f\big\|_{L^2(\{|x|\sim 2^{j}\})}.\label{est:23.02I}
\end{align}
Using the relationship $r\sim s$, the term \eqref{23.02II}--\eqref{23.02IV} can be treated in the similar way as \eqref{22.09}--\eqref{22.11}. We just take the term
\eqref{23.02II} for example. Indeed, 
by  Corollary \ref{cor:L2-estimate-FIO}, we have
\begin{align}
\|\eqref{23.02II}\|_{L^2_r}
\lesssim &
2^{(2-\frac{d-1}{2})j}\Big\|\rho ^{-\frac{d-1}{2}}
 \chi_{2^k}(\rho) \rho^{d-1}\int_0^{+\infty}\!\!\! O\big((\rho s)^{-\frac{d+1}{2}}\big)\chi_{2^{j-2}\le \cdot\le 2^{j+2}}(s)s^{\beta+d-1}f(s)\,ds \Big\|_{L^2_\rho}\notag\\
\lesssim &
\big\|\chi_{2^{j-2}\le \cdot\le 2^{j+2}}(s)s^{\frac{d-1}{2}}f(s) \big\|_{L^2_s}
\lesssim \big\|f\big\|_{L^2(\{|x|\sim 2^{j}\})}.\label{est:23.02II}
\end{align}
Similar as \eqref{22.10}, \eqref{22.11}, we have
\begin{align}
\|\eqref{23.02III}\|_{L^2_r}+\|\eqref{23.02IV}\|_{L^2_r}
\lesssim \big\|f\big\|_{L^2(\{|x|\sim 2^{j}\})}.\label{est:23.02III}
\end{align}
Combining with \eqref{est:23.02II}, \eqref{est:23.02III}  and \eqref{est:23.02I}, we obtain that
\begin{align*}
\|\eqref{23.02}\|_{L^2_r}
\lesssim
\big\|f\big\|_{L^2(\{|x|\sim 2^{j}\})}.
\end{align*}
Hence, we have
\begin{align*}
\|\chi_{2^j}(r)\cdot\eqref{22.08II}\|_{L^2_r}
\lesssim
\big\|f\big\|_{L^2(\{|x|\sim 2^{j}\})}.
\end{align*}
Therefore,
\begin{align*}
\|\eqref{22.08II}\|_{L^2_r}^2
\lesssim
\sum\limits_{j=-k}^{+\infty}\|\chi_{2^j}(r)\cdot\eqref{22.08II}\|_{L^2_r}^2
\lesssim
\sum\limits_{j=-k}^{+\infty}\big\|f\big\|_{L^2(\{|x|\sim 2^{j}\})}^2
\lesssim \big\|f\big\|_{L^2(\R^d)}^2.
\end{align*}
This last estimate together with \eqref{est: 22.08I}, yields that
\begin{align}\label{est:22.08}
\|\eqref{22.08}\|_{L^2_r}
\lesssim \big\|f\big\|_{L^2(\R^d)}.
\end{align}

Now we consider the second part \eqref{22.09}.  Using the formula,
$$
\partial_\rho e^{2\pi i  \rho r\sin \theta}=2\pi i  r\sin \theta\cdot  e^{2\pi i  \rho r\sin \theta},
$$
and integration-by-parts, we reduce \eqref{22.09} to
\begin{align*}
\int_0^{+\infty}\!\!\!  \int_0^{\frac\pi2} &e^{2\pi i  \rho r\sin \theta}\frac{\chi_{\ge \frac\pi6}(\theta)\cos^{d-2} \theta }{(2\pi i \sin\theta)^2} \,d\theta\>\partial_\rho^2\Big[\chi_{\ge 1}(\rho r) \chi_{2^k}(\rho) \rho^{d-1}\notag\\
&\cdot\int_0^{+\infty}O\big( \langle\rho s\rangle^{-10}\big) s^{\beta+d-1}f(s)\,ds\Big] d\rho.
\end{align*}
Then by Corollary \ref{cor:L2-estimate-FIO}, we have
\begin{align*}
\|\eqref{22.09}\|_{L^2_r}
\lesssim & \int_0^{\frac\pi2}\Big\|\partial_\rho^2\Big[ \chi_{2^k}(\rho) \rho^{d-1} \int_0^{+\infty}O\big( \langle\rho s\rangle^{-10}\big) s^{\beta+d-1}f(s)\,ds\Big]\Big\|_{L^2_\rho}
\chi_{\ge \frac\pi6}(\theta)\frac{\cos^{d-2} \theta}{\sin^\frac52 \theta}  \,d\theta\\
\lesssim & \Big\|\partial_\rho^2\Big[ \chi_{2^k}(\rho) \rho^{d-1} \int_0^{+\infty}O\big( \langle\rho s\rangle^{-10}\big) s^{\beta+d-1}f(s)\,ds\Big]\Big\|_{L^2_\rho}.
\end{align*}
Here we regard $\chi_1, \chi_1', \chi_1''$ as the same because they have the same properties we need. 
From the explicit formula in \eqref{12.29}, we note that 
$$
\partial_\rho^j\left[O\big( \langle\rho s\rangle^{-10}\big)\right]=O\big(s^j  \langle\rho s\rangle^{-10}\big), \quad 
\mbox{for } j=0,1,2.
$$
Hence,  we get that 
\begin{align*}
&\Big|\partial_\rho^2\Big[ \chi_{2^k}(\rho) \rho^{d-1} \int_0^{+\infty}O\big( \langle\rho s\rangle^{-10}\big) s^{\beta+d-1}f(s)\,ds\Big]\Big|\\
\lesssim &
\int_0^{+\infty}\chi_{2^k}(\rho)\rho^{d-1}\langle\rho s\rangle^{-10}s^2 s^{\beta+d-1}|f(s)|\,ds
+\int_0^{+\infty}\chi_{2^k}(\rho)\rho^{d-3}\langle\rho s\rangle^{-10} s^{\beta+d-1}|f(s)|\,ds\\
\lesssim &
\int_0^{+\infty}\chi_{2^k}(\rho)\>\left[(\rho s)^{d-1}+(\rho s)^{d-3}\right]\langle\rho s\rangle^{-10}s^{\frac{d-1}2}|f(s)|\,ds.
\end{align*}
Therefore, by H\"older's inequality, we have
\begin{align}
\|\eqref{22.09}\|_{L^2_r}\lesssim \|s^{\frac{d-1}2}f\|_{L^2_s}\lesssim \|f\|_{L^2(\R^d)}. \label{est: 22.09}
\end{align}

For the term \eqref{22.10}, we split it into the following two subsubparts again,
\begin{subequations}
\renewcommand{\theequation}
{\theparentequation-\roman{equation}}
\begin{align}
 r^2 &\int_0^{+\infty} O\big( (\rho r)^{-5}\big)\chi_{\ge 1}(\rho r) \chi_{2^k}(\rho) \rho^{d-1}\notag\\
&\cdot\int_0^{+\infty}\!\!\! \int_{-\frac\pi2}^{\frac\pi2}e^{-2\pi i  \rho s\sin \theta'}\chi_{\ge \frac\pi6}(\theta')\cos^{d-2} \theta'  \,d\theta' \chi_{\le 2^{-k}}(s)s^{\beta+d-1}f(s)\,ds d\rho; \label{22.10I}
\end{align}
and
\begin{align}
 r^2 &\int_0^{+\infty} O\big( (\rho r)^{-5}\big)\chi_{\ge 1}(\rho r) \chi_{2^k}(\rho) \rho^{d-1}\notag\\
&\cdot\int_0^{+\infty}\!\!\! \int_{-\frac\pi2}^{\frac\pi2}e^{-2\pi i  \rho s\sin \theta'}\chi_{\ge \frac\pi6}(\theta')\cos^{d-2} \theta'  \,d\theta' \chi_{\ge 2^{-k}}(s)s^{\beta+d-1}f(s)\,ds d\rho.\label{22.10II}
\end{align}
\end{subequations}
By H\"older's inequality,  we have 
\begin{align*}
  |\eqref{22.10I}|\lesssim  r^{-3}&\Big\|\chi_{\ge 1}(\rho r) \chi_{2^k}(\rho) \rho^{d-6}\Big\|_{L^2_\rho}\notag\\
&\cdot\int_{-\frac\pi2}^{\frac\pi2}\!\!\Big\|\int_0^{+\infty}\!\!\!e^{-2\pi i  \rho s\sin \theta'} \chi_{\le 2^{-k}}(s)s^{\beta+d-1}f(s)\,ds \Big\|_{L^2_\rho}\chi_{\ge \frac\pi6}(\theta')\cos^{d-2} \theta' \,d\theta',
\end{align*}
then by Plancherel's identity, it is further bounded by
\begin{align*}
 r^{-3}\chi_{\ge 2^{-k-1}}(r)2^{(d-\frac12-5)k} \Big\| \chi_{\le 2^{-k}}(s)s^{d-3}s^{\frac{d-1}2}f(s) \Big\|_{L^2_s}.
\end{align*}
Hence,  since $d\ge3$, we have
\begin{align}
\|\eqref{22.10I}\|_{L^2_r}\lesssim \|s^{\frac{d-1}2}f\|_{L^2_s}\lesssim \|f\|_{L^2(\R^d)}. \label{est: 22.10I}
\end{align}

Now we consider \eqref{22.10II}. From Lemma \ref{lem:Jr}, we have
$$
\int_{-\frac\pi2}^{\frac\pi2}e^{-2\pi i  \rho s\sin \theta'}\chi_{\ge \frac\pi6}(\theta')\cos^{d-2} \theta'  \,d\theta'
=(\rho s)^{-\frac{d-1}{2}}\Big(\bar c e^{-2\pi i\rho s}+c  e^{2\pi i\rho s}\Big) +O\big((\rho s)^{-\frac{d+1}{2}}\big).
$$
Accordingly, \eqref{22.10II} can be further divided into the following two sub$^3$-parts again,
\begin{subequations}
\renewcommand{\theequation}
{\theparentequation-ii\alph{equation}}
\begin{align}
 r^2 &\int_0^{+\infty} O\big( (\rho r)^{-5}\big)\chi_{\ge 1}(\rho r) \chi_{2^k}(\rho) \rho^{d-1}\notag\\
&\cdot\int_0^{+\infty}\!\!\!(\rho s)^{-\frac{d-1}{2}}\Big(\bar c e^{2\pi i\rho s}+c  e^{-2\pi i\rho s}\Big) \chi_{\ge 2^{-k}}(s)s^{\beta+d-1}f(s)\,ds d\rho;\label{22.10II-1}
\end{align}
and
\begin{align}
 r^2 &\int_0^{+\infty} O\big( (\rho r)^{-5}\big)\chi_{\ge 1}(\rho r) \chi_{2^k}(\rho) \rho^{d-1}\notag\\
&\cdot\int_0^{+\infty}\!\!\!O\big((\rho s)^{-\frac{d+1}{2}}\big) \chi_{\ge 2^{-k}}(s)s^{\beta+d-1}f(s)\,ds d\rho.\label{22.10II-2}
\end{align}
\end{subequations}
Thanks to the power of $(\rho s)^{-\frac{d-1}{2}}$ in the integral, we can deal with \eqref{22.10II-1}  as in \eqref{22.10I}, which implies that
\begin{align*}
|\eqref{22.10II-1}|\lesssim  r^{-3}\chi_{\ge 2^{-k-1}}(r)2^{(\frac d2-5)k} \Big\| \chi_{\ge 2^{-k}}(s)s^{\frac{d-5}2}s^{\frac{d-1}2}f(s) \Big\|_{L^2_s}.
\end{align*}
Since $d\le 5$, we have
\begin{align}
\|\eqref{22.10II-1}\|_{L^2_r}\lesssim   \|f\|_{L^2(\R^d)}. \label{est: 22.10II-1}
\end{align}
The term \eqref{22.10II-2} is controlled by
$$
r^{-3}\int_0^{+\infty}\!\!\!\int_0^{+\infty}\!\!\!  \chi_{\ge 1}(\rho r)\chi_{2^k}(\rho) \chi_{\ge 2^{-k}}(s)\rho^{\frac{d-13}{2}}s^{\frac{d-7}{2}}s^{\frac{d-1}2}|f(s)|\,ds d\rho.
$$
Hence,  by H\"older's inequality,  we obtain that
\begin{align}
\|\eqref{22.10II-2}\|_{L^2_r}\lesssim   \|f\|_{L^2(\R^d)}. \label{est: 22.10II-2}
\end{align}
Therefore, together with \eqref{est: 22.10II-1} and \eqref{est: 22.10II-2}, we get
\begin{align*}
\|\eqref{22.10II}\|_{L^2_r}\lesssim   \|f\|_{L^2(\R^d)}.
\end{align*}
Hence, combining this last estimate with \eqref{est: 22.10I}, we get
\begin{align}
\|\eqref{22.10}\|_{L^2_r}\lesssim   \|f\|_{L^2(\R^d)}. \label{est: 22.10}
\end{align}

For the term \eqref{22.11},
it can be controlled by
$$
r^{-3}\int_0^{+\infty}\!\!\!\int_0^{+\infty}\!\!\!  \chi_{\ge 1}(\rho r)\chi_{2^k}(\rho) \langle\rho s\rangle^{-10}\rho^{d-6}s^{\beta+d-1}f(s)\,ds d\rho.
$$
Similar as \eqref{22.10II-2}, we have that
\begin{align}
\|\eqref{22.11}\|_{L^2_r}\lesssim   \|f\|_{L^2(\R^d)}. \label{est: 22.11}
\end{align}
This together with \eqref{est: 22.09}, \eqref{est: 22.10} and \eqref{est: 22.11}, gives
\begin{align*}
\|\eqref{Larger1}\|_{L^2_r}
\lesssim \big\|f\big\|_{L^2(\R^d)}.
\end{align*}
Hence, this last estimate above combined with \eqref{est:Small1}, yields the desired estimate.
\end{proof}

Based on Proposition \ref{prop:bound-fpm-L2}, we have the following estimate which can be regarded as the extension of the Bernstein estimate.
\begin{cor}\label{cor:f+-Hs}
For any $s\in [0,4]$, and any integer $k\ge 0$, suppose that $f\in \dot H^s(\R^d)$, then
\begin{align*}
\Big\|\chi_{\ge \frac 14}\left(P_{2^k}\big(\chi_{\ge 1}f\big)\right)_{out/in,k-1\le \cdot \le k+1}\Big\|_{\dot H^s(\R^d)}&\lesssim 2^{sk}\big\|P_{2^k}\big(\chi_{\ge 1}f\big)\big\|_{L^2(\R^d)}.
\end{align*}
Here the implicit constant is independent of $k$.
\end{cor}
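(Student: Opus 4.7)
The plan is to establish the estimate for the integer values $s \in \{0,2,4\}$ and then interpolate. Write $g := (P_{2^k}(\chi_{\ge 1}f))_{out/in,\,k-1 \le \cdot \le k+1}$, a sum of three pieces $g_j$ with $j \in \{k-1,k,k+1\}$. The case $s = 0$ is immediate from Proposition \ref{prop:bound-fpm-L2}, since each $g_j$ satisfies $\|g_j\|_{L^2} \lesssim \|P_{2^k}(\chi_{\ge 1}f)\|_{L^2}$ and multiplication by the bounded factor $\chi_{\ge 1/4}$ is harmless in $L^2$.

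For the even integer cases $s \in \{2,4\}$, I would apply the Leibniz formula to $(-\Delta)^{s/2}(\chi_{\ge 1/4}\, g)$. All terms in the resulting expansion are products of a derivative of $\chi_{\ge 1/4}$ (uniformly bounded and supported in the bounded annulus $\{1/4 \le |x| \le 11/40\}$ when of order $\ge 1$) with a derivative of $g$ of some order $m \le s$. The whole task then reduces to the Bernstein-type inequality
\[
\|(-\Delta)^{m/2} g\|_{L^2(\R^d)} \lesssim 2^{mk} \|P_{2^k}(\chi_{\ge 1}f)\|_{L^2(\R^d)}, \qquad m = 0,1,2,3,4.
\]
For non-integer $s \in (0,4)$, complex interpolation applied to the linear operator $h \mapsto \chi_{\ge 1/4}\, h_{out/in,\,k-1 \le \cdot \le k+1}$, viewed as a map $L^2(\R^d) \to \dot H^s(\R^d)$, yields the remaining cases.

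The heart of the argument is thus the Bernstein inequality above. To prove it, I would differentiate inside the integral representation
\[
g_j(r) = r^{-\beta}\int_0^{+\infty}\bigl(J(\rho r) - K(\rho r)\bigr)\,\chi_{2^j}(\rho)\,\rho^{d-1}\,\mathcal{F}\big(P_{2^k}(\chi_{\ge 1}f)\big)(\rho)\,d\rho.
\]
Each derivative acting on the kernel $J(\rho r) - K(\rho r)$, via the oscillatory representations of Corollary \ref{cor:Jr-osc1} and the explicit form of $K$, produces either a factor of $\rho$ (from $\partial_r e^{2\pi i\rho r \sin\theta}$) or an extra power of decay in $\rho r$. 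Since $\chi_{2^j}(\rho)$ pins $\rho \sim 2^k$, the factors of $\rho$ are at most $O(2^k)$, delivering the claimed gain $2^{mk}$ after $m$ differentiations. The resulting differentiated integrals are then estimated in $L^2$ by reproducing the oscillatory-integral analysis (stationary phase expansions, integration by parts in $\rho$, and the Fourier integral operator bound of Lemma \ref{lem:L2-estimate-FIO}) already developed in the proof of Proposition \ref{prop:bound-fpm-L2}.

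The main obstacle is carrying out this bookkeeping cleanly for all $m \le 4$. Differentiating the spatial cutoffs $\chi_{\le 1}(\rho r)$ and $\chi_{\ge 1}(\rho r)$, which are used to split $J - K$ by the size of $\rho r$, produces new boundary-type terms localized at $\rho r \sim 1$; and derivatives of $K(\rho r) = \chi_{\ge 1}(\rho r)\bigl[-(2\pi i \rho r)^{-1} + (d-3)(2\pi i \rho r)^{-3}\bigr]$ become more singular as $\rho r \to 0$, though these contributions remain controllable thanks to the cutoff $\chi_{\ge 1}(\rho r)$. Both effects must be handled term by term, but the calculations closely parallel (and share most of their structure with) those already performed in Proposition \ref{prop:bound-fpm-L2}.
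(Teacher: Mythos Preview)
Your strategy is sound and lands on the same reduction as the paper: prove the estimate at integer $s$ (the paper does only $s=4$, relying on interpolation for the rest) by differentiating the integral representation and showing that each $r$-derivative costs a factor $2^{k}$, then re-running the $L^2$ machinery of Proposition~\ref{prop:bound-fpm-L2}. One imprecision worth fixing: your Bernstein-type inequality $\|(-\Delta)^{m/2}g\|_{L^2(\R^d)}\lesssim 2^{mk}\|P_{2^k}(\chi_{\ge 1}f)\|_{L^2}$ is stated globally, but the Leibniz expansion only ever pairs $\partial^b g$ with a factor supported in $\{|x|\ge 1/4\}$, so you only need (and should only claim) the estimate restricted to that region. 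Globally the prefactor $r^{-\beta}$ (with $\beta=\frac{d-1}{2}-2$) can make high derivatives of $g$ delicate near the origin, and there is no reason to fight that battle.

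Where you and the paper diverge is in \emph{how} the $r$-derivatives are converted into powers of $2^k$. You propose to differentiate the kernel directly: $\partial_r e^{2\pi i\rho r\sin\theta}=2\pi i\rho\sin\theta\,e^{2\pi i\rho r\sin\theta}$ produces an explicit $\rho\sim 2^k$, and the remaining oscillatory integral (now weighted by $\sin^b\theta$) has the same analytic structure as before, so Proposition~\ref{prop:bound-fpm-L2}'s argument applies with cosmetic changes. The paper instead uses the identity $\partial_r^{j_1}\bigl(J(\rho r)-K(\rho r)\bigr)=(\rho/r)^{j_1}\partial_\rho^{j_1}\bigl(J(\rho r)-K(\rho r)\bigr)$ and integrates by parts in $\rho$, which transfers the derivatives onto $\chi_{2^k}(\rho)\rho^{d-1}\mathcal F\bigl(P_{2^k}(\chi_{\ge 1}f)\bigr)(\rho)$. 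Derivatives landing on $\chi_{2^k}(\rho)\rho^{j_1}$ are harmless, but derivatives landing on $\mathcal F\bigl(P_{2^k}(\chi_{\ge 1}f)\bigr)$ force a second swap $\partial_\rho\to(s/\rho)\partial_s$ and a further round of integration by parts, eventually producing $\partial_s^{j_1}\bigl[P_{2^k}(\chi_{\ge 1}f)\bigr]$ and a rather lengthy case analysis (splitting by the size of $s$ relative to $r$, and so on). Your route avoids this detour entirely; the price is that you must check by hand that the differentiated pieces of $K(\rho r)$ and of the transition cutoffs $\chi_{\lessgtr 1}(\rho r)$ still fit into the framework of Proposition~\ref{prop:bound-fpm-L2}, which you correctly flag as the main bookkeeping burden.
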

\begin{proof}
We only consider the case when $s=4$, since the other cases can be obtained by the interpolation.

First, we give some reductions.
Note that for the radial function $g$, we have the following formula, for any integer $s\ge0$,
$$
(-\Delta)^{\frac s2}g(x)=\sum\limits_{j=0}^s c_j r^{j-s}\partial_r^jg(r),
$$
where $c_j$ are the constants dependent on $d$ and $j$, and $r=|x|$.
Therefore,
\begin{align*}
\Big\|\chi_{\ge \frac 14}&\left(P_{2^k}\big(\chi_{\ge 1}f\big)\right)_{out/in,k-1\le \cdot \le k+1}\Big\|_{\dot H^4(\R^d)}\\
&\lesssim \sum\limits_{j=0}^4c_j\big\|\chi_{\gtrsim 1}(r)r^{\frac{d-1}{2}}\partial_r^j\left(P_{2^k}\big(\chi_{\ge 1}f\big)\right)_{out/in,k-1\le \cdot \le k+1}(r)\big\|_{L^2_r}.
\end{align*}
Hence, we only need to show that for any integer $j\le 4$,
\begin{align}
\big\|\chi_{\gtrsim 1}(r)r^{\frac{d-1}{2}}\partial_r^j\left(P_{2^k}\big(\chi_{\ge 1}f\big)\right)_{out/in,k-1\le \cdot \le k+1}(r)\big\|_{L^2_r}
&\lesssim  2^{jk}\big\|P_{2^k}\big(\chi_{\ge 1}f\big)\big\|_{L^2(\R^d)}.\label{23.06}
\end{align}
Furthermore, from Definition \ref{def:outgong-incoming},
\begin{align}
\partial_r^j\big(P_{2^k}&\big(\chi_{\ge 1}f\big)\big)_{out/in,k-1\le \cdot \le k+1}(r)= \sum\limits_{j_1=0}^j c_{j_1} r^{-\beta-j+j_1}\notag\\
&\cdot\int_0^{+\infty}\!\!\partial_r^{j_1}\Big(J(\rho r)-K(\rho r)\Big) \> \chi_{2^k}(\rho)\>\rho^{d-1} \mathcal F \big(P_{2^k}(\chi_{\ge 1}f)\big)(\rho)\, d\rho,
\label{23.17}
\end{align}
where we have simplified the notation and denote
$$
\chi_{2^{k}}(\rho)=\chi_{2^{k-1}\le\cdot \le 2^{k+1}}(\rho),
$$
and the  constants $c_{j}$ may change line to line in the following. Moreover, we note that
$$
\partial_r^{j_1}\big(J(\rho r)-K(\rho r)\big)
=\Big(\frac{\rho}{r}\Big)^{j_1}\partial_\rho^{j_1}\big(J(\rho r)-K(\rho r)\big).
$$
Hence, \eqref{23.17} reduces to
\begin{align*}
\sum\limits_{j_1=0}^j c_{j_1} r^{-\beta-j}\int_0^{+\infty}\!\!\partial_\rho^{j_1}\Big(J(\rho r)-K(\rho r)\Big) \> \chi_{2^k}(\rho)\>\rho^{d-1+j_1} \mathcal F \big(P_{2^k}(\chi_{\ge 1}f)\big)(\rho)\, d\rho.
\end{align*}
Further, by integration-by-parts, it turns to
\begin{align*}
\sum\limits_{j_1=0}^j c_{j_1} r^{-\beta-j}\int_0^{+\infty}\!\!\Big(J(\rho r)-K(\rho r)\Big) \> \partial_\rho^{j_1}\Big[\chi_{2^k}(\rho)\>\rho^{d-1+j_1} \mathcal F \big(P_{2^k}(\chi_{\ge 1}f)\big)(\rho)\Big]\, d\rho.
\end{align*}
Therefore, we obtain that
\begin{align*}
\partial_r^j\big(P_{2^k}&\big(\chi_{\ge 1}f\big)\big)_{out/in,k-1\le \cdot \le k+1}(r)\\
=&
\sum\limits_{j_1=0}^j c_{j_1} r^{-\beta-j}\!\!\int_0^{+\infty}\!\!\!\Big(J(\rho r)-K(\rho r)\Big) \> \partial_\rho^{j_1}\Big[\chi_{2^k}(\rho)\>\rho^{j_1}\rho^{d-1} \Big]\mathcal F \big(P_{2^k}(\chi_{\ge 1}f)\big)(\rho)\Big]\, d\rho\\
&\quad+\sum\limits_{j_1=0}^j c_{j_1} r^{-\beta-j}\!\!\int_0^{+\infty}\!\!\!\Big(J(\rho r)-K(\rho r)\Big) \>\chi_{2^k}(\rho)\rho^{j_1}\rho^{d-1}\partial_\rho^{j_1}\Big[ \mathcal F \big(P_{2^k}\big(\chi_{\ge 1}f\big)\big)(\rho)\Big]\, d\rho.
\end{align*}
Therefore, to prove \eqref{23.06}, we only need to show the following two estimates: For any $j_1\le j$, 
\begin{subequations}
\begin{align}
\Big\|\chi_{\gtrsim 1}(r)r^{\frac{d-1}{2}}r^{-\beta-j}&\!\!\int_0^{+\infty}\!\!\!\Big(J(\rho r)-K(\rho r)\Big) \> \partial_\rho^{j_1}\Big[\chi_{2^k}(\rho)\>\rho^{j_1}\rho^{d-1}\Big] \notag\\
&\cdot \mathcal F \big(P_{2^k}\big(\chi_{\ge 1}f\big)\big)(\rho)\, d\rho\Big\|_{L^2_r}
\lesssim 2^{jk}\big\|P_{2^k}\big(\chi_{\ge 1}f\big)\big\|_{L^2(\R^d)};\label{0.34-I}
\end{align}
and
\begin{align}
\Big\|\chi_{\gtrsim 1}(r)r^{\frac{d-1}{2}}r^{-\beta-j}&\!\!\int_0^{+\infty}\!\!\!\Big(J(\rho r)-K(\rho r)\Big) \>\chi_{2^k}(\rho)\rho^{j_1}\rho^{d-1}\notag\\
&\cdot \partial_\rho^{j_1}\Big[ \mathcal F \big(P_{2^k}\big(\chi_{\ge 1}f\big)\big)(\rho)\Big]\, d\rho\Big\|_{L^2_r}
\lesssim
2^{jk}\big\|P_{2^k}\big(\chi_{\ge 1}f\big)\big\|_{L^2(\R^d)}.\label{0.34-II}
\end{align}
\end{subequations}
For \eqref{0.34-I}, note that $r\gtrsim 1, k\ge 0$, it thus reduces to show
\begin{align*}
\Big\|r^{-\beta}\!\!\int_0^{+\infty}\!\!\!\Big(J(\rho r)-K(\rho r)\Big) \> &\partial_\rho^{j_1}\Big[\chi_{2^k}(\rho)\>\rho^{j_1}\rho^{d-1}\Big] \mathcal F \big(P_{2^k}\big(\chi_{\ge 1}f\big)\big)(\rho)\, d\rho\Big\|_{L^2(\R^d)}\notag\\
&\lesssim \big\|P_{2^k}\big(\chi_{\ge 1}f\big)\big\|_{L^2(\R^d)}.
\end{align*}
Then we can prove it in the same way as Proposition \ref{prop:bound-fpm-L2}. Indeed,  we can regard
$$
\partial_\rho^{j_1}\Big[\chi_{2^k}(\rho)\>\rho^{j_1}\rho^{d-1}\Big] \quad \mbox{ as } \quad\chi_{2^k}(\rho)\rho^{d-1},
$$
because they have the same properties used in the proof of Proposition \ref{prop:bound-fpm-L2}.

For \eqref{0.34-II}, we first consider
$$
\partial_\rho^{j_1}\Big[ \mathcal F \big(P_{2^k}\big(\chi_{\ge 1}f\big)\big)(\rho)\Big],
$$
which is equal to
\begin{align*}
\int_0^{+\infty}\!\!\partial_\rho^{j_1}\Big(J(\rho s)+J(-\rho s)\Big)s^{\beta+d-1}P_{2^k}\big(\chi_{\ge 1}f\big)(s)\,ds.
\end{align*}
Using the relationship
$$
\partial_\rho^{j_1}\Big(J(\rho s)+J(-\rho s)\Big)=\Big(\frac{s}{\rho}\Big)^{j_1}\partial_s^{j_1}\Big(J(\rho s)+J(-\rho s)\Big),
$$
we have
\begin{align*}
\partial_\rho^{j_1}\Big[ \mathcal F \big(P_{2^k}\big(\chi_{\ge 1}f\big)\big)(\rho)\Big]
=\frac1{\rho^{j_1}}\int_0^{+\infty}\!\!\partial_s^{j_1}\Big(J(\rho s)+J(-\rho s)\Big)s^{\beta+d-1+j_1}P_{2^k}\big(\chi_{\ge 1}f\big)(s)\,ds.
\end{align*}
Then by integration-by-parts, to prove \eqref{0.34-II}, it suffices to show the following two estimates. The first one is
\begin{subequations}
\renewcommand{\theequation}
{\theparentequation-\arabic{equation}}
\begin{align}
\Big\|\chi_{\gtrsim 1}&(r)r^{\frac{d-1}{2}}r^{-\beta-j}\!\!\int_0^{+\infty}\!\!\!\Big(J(\rho r)-K(\rho r)\Big) \>\chi_{2^k}(\rho)\rho^{d-1}\notag\\
&\cdot \int_0^{+\infty}\!\!\Big(J(\rho s)+J(-\rho s)\Big)s^{\beta+d-1}P_{2^k}\big(\chi_{\ge 1}f\big)(s)\,ds d\rho\Big\|_{L^2_r}\lesssim
\big\|P_{2^k}\big(\chi_{\ge 1}f\big)\big\|_{L^2(\R^d)};\label{0.34-II-1}
\end{align}
and the second one is
\begin{align}
\Big\|\chi_{\gtrsim 1}(r)&r^{\frac{d-1}{2}}r^{-\beta-j}\!\!\int_0^{+\infty}\!\!\!\Big(J(\rho r)-K(\rho r)\Big) \>\chi_{2^k}(\rho)\rho^{d-1}\notag\\
&\cdot \int_0^{+\infty}\!\!\Big(J(\rho s)+J(-\rho s)\Big)s^{\beta+d-1+j_1}\partial_s^{j_1}\Big[P_{2^k}\big(\chi_{\ge 1}f\big)\Big]\,ds d\rho\Big\|_{L^2_r}\notag\\
&\qquad \qquad \lesssim
2^{jk}\big\|P_{2^k}\big(\chi_{\ge 1}f\big)\big\|_{L^2(\R^d)}.\label{0.34-II-2}
\end{align}
\end{subequations}
For \eqref{0.34-II-1}, we drop $\chi_{\gtrsim 1}(r)r^{-j}$ and reduce the left-hand side of  \eqref{0.34-II-1}  to 
\begin{align*}
\Big\|&r^{\frac{d-1}{2}-\beta}\!\!\int_0^{+\infty}\!\!\!\Big(J(\rho r)-K(\rho r)\Big) \>\chi_{2^k}(\rho)\rho^{d-1}\notag\\
&\cdot \int_0^{+\infty}\!\!\!\Big(J(\rho s)+J(-\rho s)\Big)s^{\beta+d-1}P_{2^k}\big(\chi_{\ge 1}f\big)(s)\,ds d\rho\Big\|_{L^2_r}.
\end{align*}
Note that by Definition \ref{def:outgong-incoming}, it  is equal to
$$
\big\|\left(P_{2^k}\big(\chi_{\ge 1}f\big)\right)_{out,k}\big\|_{L^2(\R^d)}.
$$
Thus the estimate  \eqref{0.34-II-1} follows from Proposition \ref{prop:bound-fpm-L2}.

For \eqref{0.34-II-2}, we split its left-hand side term into the following two subparts again:
\begin{subequations}
\renewcommand{\theequation}
{\theparentequation-\roman{equation}}
\begin{align}
\Big\|\chi_{\gtrsim 1}(r)&r^{\frac{d-1}{2}}r^{-\beta-j}\!\!\int_0^{+\infty}\!\!\!\Big(J(\rho r)-K(\rho r)\Big) \>\chi_{2^k}(\rho)\rho^{d-1}\notag\\
&\cdot \int_0^{+\infty}\!\!\Big(J(\rho s)+J(-\rho s)\Big)s^{\beta+d-1+j_1}\chi_{\le 1}(s)\partial_s^{j_1}\Big[P_{2^k}\big(\chi_{\ge 1}f\big)\Big]\,ds d\rho\Big\|_{L^2_r};
\label{20.54-I}
\end{align}
and
\begin{align}
\Big\|\chi_{\gtrsim 1}&(r)r^{\frac{d-1}{2}}r^{-\beta-j}\!\!\int_0^{+\infty}\!\!\!\Big(J(\rho r)-K(\rho r)\Big) \>\chi_{2^k}(\rho)\rho^{d-1}\notag\\
&\cdot
\int_0^{+\infty}\!\!\Big(J(\rho s)+J(-\rho s)\Big) s^{\beta+d-1+j_1}\chi_{\ge 1}(s)\partial_s^{j_1}\Big[P_{2^k}\big(\chi_{\ge 1}f\big)\Big]\,ds d\rho\Big\|_{L^2_r}.\label{20.54-II}
\end{align}
\end{subequations}

By Definition \ref{def:outgong-incoming} again, \eqref{20.54-I} is controlled by
$$
\Big\|\left(\chi_{\le 1}(r)r^{j_1}\partial_r^{j_1}\Big[P_{2^k}\big(\chi_{\ge 1}f\big)\Big]\right)_{out,k}\Big\|_{L^2(\R^d)}.
$$
Hence by Proposition \ref{prop:bound-fpm-L2}, it is further bounded by
\begin{align}
\Big\|\chi_{\le 1}(r)r^{j_1}\partial_r^{j_1}\Big[P_{2^k}\big(\chi_{\ge 1}f\big)\Big]\Big\|_{L^2(\R^d)}. \label{21.37}
\end{align}
Note that for any radial function $g$,
\begin{align}
\partial_r^{j_1}g(r)= \Big(\frac x{|x|}\cdot \nabla\Big)^{j_1}g(x).\label{22.12}
\end{align}
Therefore, using \eqref{21.37} and the formula \eqref{22.12},   we have that 
\begin{align*}
 \eqref{20.54-I} \lesssim 2^{j_1k}\big\|P_{2^k}\big(\chi_{\ge 1}f\big)\big\|_{L^2(\R^d)}.
\end{align*}

For \eqref{20.54-II}, we note that
\begin{align*}
\eqref{20.54-II}^2=\sum\limits_{h=0}^{+\infty}&2^{-jh}\Big\|\chi_{2^h}(r)r^{\frac{d-1}{2}}r^{-\beta}\!\!\int_0^{+\infty}\!\!\!\Big(J(\rho r)-K(\rho r)\Big) \>\chi_{2^k}(\rho)\rho^{d-1}\notag\\
&\cdot \int_0^{+\infty}\!\!\Big(J(\rho s)+J(-\rho s)\Big) s^{\beta+d-1+j_1}\chi_{\ge 1}(s)\partial_s^{j_1}\Big[P_{2^k}\big(\chi_{\ge 1}f\big)\Big]\,ds d\rho\Big\|_{L^2_r}^2.
\end{align*}
Due to this,  we may split $\eqref{20.54-II}^2$ into the following three subsubparts again. The first one is
\begin{subequations}
\renewcommand{\theequation}
{\theparentequation-ii-\arabic{equation}}
\begin{align}
\sum\limits_{h=0}^{+\infty}2^{-jh}\Big\|&\chi_{2^h}(r)r^{\frac{d-1}{2}}r^{-\beta}\!\!\int_0^{+\infty}\!\!\!\Big(J(\rho r)-K(\rho r)\Big) \>\chi_{2^k}(\rho)\rho^{d-1}\int_0^{+\infty}\!\!\Big(J(\rho s)+J(-\rho s)\Big)\notag\\
&\cdot s^{\beta+d-1+j_1}\chi_{\ge 1}(s) \chi_{2^{h-2}\le \cdot \le 2^{h+2}}(s)\partial_s^{j_1}\Big[P_{2^k}\big(\chi_{\ge 1}f\big)\Big]\,ds d\rho\Big\|_{L^2_r}^2;\label{20.54-II-1}
\end{align}
the second one is
\begin{align}
\sum\limits_{h=0}^{+\infty}2^{-jh}\Big\|&\chi_{2^h}(r)r^{\frac{d-1}{2}}r^{-\beta}\!\!\int_0^{+\infty}\!\!\!\Big(J(\rho r)-K(\rho r)\Big) \>\chi_{2^k}(\rho)\rho^{d-1}\int_0^{+\infty}\!\!\Big(J(\rho s)+J(-\rho s)\Big)\notag\\
&\cdot s^{\beta+d-1+j_1}\chi_{\ge 1}(s) \chi_{\ge 2^{h+2}}(s)\partial_s^{j_1}\Big[P_{2^k}\big(\chi_{\ge 1}f\big)\Big]\,ds d\rho\Big\|_{L^2_r}^2;\label{20.54-II-2}
\end{align}
and the third one is 
\begin{align}
\sum\limits_{h=0}^{+\infty}2^{-jh}\Big\|&\chi_{2^h}(r)r^{\frac{d-1}{2}}r^{-\beta}\!\!\int_0^{+\infty}\!\!\!\Big(J(\rho r)-K(\rho r)\Big) \>\chi_{2^k}(\rho)\rho^{d-1}\int_0^{+\infty}\!\!\Big(J(\rho s)+J(-\rho s)\Big)\notag\\
&\cdot s^{\beta+d-1+j_1}\chi_{\ge 1}(s) \chi_{\le 2^{h-2}}(s)\partial_s^{j_1}\Big[P_{2^k}\big(\chi_{\ge 1}f\big)\Big]\,ds d\rho\Big\|_{L^2_r}^2.\label{20.54-II-3}
\end{align}
\end{subequations}
Similar as \eqref{20.54-I},  from Definition \ref{def:outgong-incoming}  and Proposition \ref{prop:bound-fpm-L2}, we have
\begin{align*}
\eqref{20.54-II-1} \lesssim \sum\limits_{h=0}^{+\infty}2^{-jh}\Big\|r^{j_1}\chi_{2^{h-2}\le \cdot \le 2^{h+2}}(r)\chi_{\ge 1}(r)\partial_r^{j_1}\Big[P_{2^k}\big(\chi_{\ge 1}f\big)\Big]\Big\|_{L^2(\R^d)}^2.
\end{align*}
Since $j_1\le j$, it is further dominated by
\begin{align*}
\sum\limits_{h=0}^{+\infty}\Big\|\chi_{2^{h-2}\le \cdot \le 2^{h+2}}(r)\chi_{\ge 1}(r)\partial_r^{j_1}\Big[P_{2^k}\big(\chi_{\ge 1}f\big)\Big]\Big\|_{L^2(\R^d)}^2
\lesssim \Big\|\chi_{\ge 1}(r)\partial_r^{j_1}\Big[P_{2^k}\big(\chi_{\ge 1}f\big)\Big]\Big\|_{L^2(\R^d)}^2.
\end{align*}
Using \eqref{22.12} again, it is bounded by $2^{2j_1k}\big\|P_{2^k}\big(\chi_{\ge 1}f\big)\big\|_{L^2(\R^d)}^2$. 

For \eqref{20.54-II-2} and \eqref{20.54-II-3}, we only consider the former, since they can be treated in the same manner. From Corollary \ref{cor:Jr-osc2}, we split \eqref{20.54-II-2} into the following two terms again,
\begin{subequations}
\renewcommand{\theequation}
{\theparentequation-ii-2\alph{equation}}
\begin{align}
\sum\limits_{h=0}^{+\infty}2^{-jh}&\Big\|\chi_{2^h}(r)r^{\frac{d-1}{2}}r^{-\beta}\!\!\int_0^{+\infty}\!\!\!\Big(J(\rho r)-K(\rho r)\Big) \>\chi_{2^k}(\rho)\rho^{d-1}\int_0^{+\infty}\!\!\int_{-\frac\pi2}^{\frac\pi2}\!\!\!e^{-2\pi i \rho s\sin \theta}\notag\\
&\cdot\chi_{\ge \frac\pi6}(\theta)\cos^{d-2} \theta  \,d\theta s^{\beta+d-1+j_1}\chi_{\ge 1}(s)\chi_{\ge 2^{h+2}}(s)\partial_s^{j_1}\Big[P_{2^k}\big(\chi_{\ge 1}f\big)\Big]\,ds d\rho\Big\|_{L^2_r}^2;\label{20.54-II-3-1}
\end{align}
and
\begin{align}
\sum\limits_{h=0}^{+\infty}2^{-jh}\Big\|&\chi_{2^h}(r)r^{-\beta}\!\!\int_0^{+\infty}\!\!\!\Big(J(\rho r)-K(\rho r)\Big) \>\chi_{2^k}(\rho)\rho^{d-1}\int_0^{+\infty}\!\!O\big(( \rho s)^{-10}\big)\notag\\
&\cdot\chi_{\ge 1}(s)\chi_{\ge 2^{h+2}}(s)s^{\beta+d-1+j_1}\partial_s^{j_1}\Big[P_{2^k}\big(\chi_{\ge 1}f\big)\Big]\,ds d\rho\Big\|_{L^2(\R^d)}^2.\label{20.54-II-3-2}
\end{align}
\end{subequations}

For \eqref{20.54-II-3-1}, due to the non-resonance of the phase, it can be treated similarly as \eqref{22.08I}.  Indeed,  from Corollary \ref{cor:Jr-osc1}, we  write
\begin{align*}
J(\rho r)-K(\rho r)=\int_0^{\frac\pi2} e^{2\pi i  \rho r\sin \theta}\eta(\theta,\rho r) \,d\theta,
\end{align*}
where $\eta(\theta,\rho r)$ is defined as
\begin{align*}
\eta(\theta,\rho r)=&\chi_{\le 1}(\rho r)\cos^{d-2} \theta+
\chi_{\ge 1}(\rho r)\chi_{\ge \frac\pi6}(\theta)\cos^{d-2} \theta\\
&+\chi_{1\le \cdot \le 2}(\rho r)\chi_{\le \frac\pi6}(\theta)\cos^{d-2} \theta+\chi_{\ge 2}(\rho r)\frac{1}{(2\pi i\rho r)^5}\tilde\eta_d(\theta).
\end{align*}
Therefore, \eqref{20.54-II-3-1} can be rewritten as
\begin{align*}
\sum\limits_{h=0}^{+\infty}2^{-jh}&\Big\|\chi_{2^h}(r)r^{\frac{d-1}{2}}r^{-\beta}\!\!\int_0^{\frac\pi2}\!\!\!\int_{-\frac\pi2}^{\frac\pi2}\!
\int_0^{+\infty}\!\!\int_0^{+\infty}\!\!e^{2\pi i  \rho  (r\sin \theta-s\sin \theta')}\rho^{d-1}\eta(\theta,\rho r) \>\chi_{2^k}(\rho)\notag\\
&\cdot s^{\beta+d-1+j_1}\chi_{\ge 1}(s)\chi_{\ge 2^{h+2}}(s)\partial_s^{j_1}\Big[P_{2^k}\big(\chi_{\ge 1}f\big)\Big]\chi_{\ge \frac\pi6}(\theta')\cos^{d-2} \theta'  \,d\rho ds  d\theta' d\theta\Big\|_{L^2_r}^2.
\end{align*}
Note that we have \eqref{7.12-11.32}. Thus using the formula \eqref{phase-def} and  integration-by-parts 10 times,  we further control \eqref{20.54-II-3-1} as
\begin{align*}
\sum\limits_{h=0}^{+\infty}&\Big\|\chi_{2^h}(r)\!\!
\int_0^{+\infty}\!\!\!\int_0^{+\infty}\!\!\!\rho^{d-11}\>\chi_{2^k}(\rho)s^{d-11}\chi_{\ge 1}(s)\chi_{\ge 2^{h+2}}(s)\left|\partial_s^{j_1}\Big[P_{2^k}\big(\chi_{\ge 1}f\big)\Big]\right|\,ds d\rho \Big\|_{L^2(\R^d)}^2.
\end{align*}
Hence, by the H\"older inequality, we get 
$$
\eqref{20.54-II-3-1}\lesssim \Big\|\chi_{\ge 1}(r)\partial_r^{j_1}\Big[P_{2^k}\big(\chi_{\ge 1}f\big)\Big]\Big\|_{L^2(\R^d)}^2\lesssim
2^{2j_1k}\big\|P_{2^k}\big(\chi_{\ge 1}f\big)\big\|_{L^2(\R^d)}^2.
$$

For \eqref{20.54-II-3-2}, thanks to the high-order decay of $\rho$ and $s$, by H\"older's inequality, it is controlled by
\begin{align*}
\sum\limits_{h=0}^{+\infty}2^{-jh}2^{(\frac{d}{2}-\beta)h}2^{-9k}
\Big\|\chi_{\ge 2^{h+2}}(s)s^{\beta+d-10+j_1}\partial_s^{j_1}\Big[P_{2^k}\big(\chi_{\ge 1}f\big)\Big]\Big\|_{L^2_s}^2.
\end{align*}
Hence, it is also bounded by
$$
2^{2j_1k}\big\|P_{2^k}\big(\chi_{\ge 1}f\big)\big\|_{L^2(\R^d)}^2.
$$
Therefore, we obtain that
$$
\eqref{20.54-II-2}\lesssim 2^{2j_1k}\big\|P_{2^k}\big(\chi_{\ge 1}f\big)\big\|_{L^2(\R^d)}^2.
$$
Collecting the estimates above, we finish the proof of the corollary.
\end{proof}

\vskip0.3cm

\section{Estimates on the incoming/outgoing linear flow}\label{sec:LE-Out-in}
In this subsection, we present some important properties on the incoming/outgoing linear flow.

\subsection{``Incoming/outgoing" decomposition of the linear flow}\label{sec:In/out-linearflow}
Let $N\ge 1$. 
 First of all, we show that the linear flow $e^{it\Delta}(P_{\ge N}\chi_{\ge 1}f)_{out}$ is almost "outgoing" with the frequency dependent velocity.
To do this, we use the Littlewood-Paley decomposition. Without loss of generality, we may assume that $N=2^{k_0}$ for some $k_0\in \N$, then
$$
P_{\ge N}\chi_{\ge 1}f=\sum\limits_{k=k_0}^\infty  P_{2^k}\big(\chi_{\ge 1}f\big),
$$
and thus
$$
\big(P_{\ge N}\chi_{\ge 1}f\big)_{out/in}=\sum\limits_{k=k_0}^\infty \left(P_{2^k}\big(\chi_{\ge 1}f\big)\right)_{out/in}.
$$
Now we consider $\left(P_{2^k}\big(\chi_{\ge 1}f\big)\right)_{out/in}$, for which we only consider the outgoing part. First, from Proposition \ref{prop:f++-highfreq}, we have
\begin{align}
\left(P_{2^k}\big(\chi_{\ge 1}f\big)\right)_{out}=\left(P_{2^k}\big(\chi_{\ge 1}f\big)\right)_{out,k-1\le \cdot \le k+1}+h_k, \label{remainer1}
\end{align}
where $h_k$ satisfies the following estimate,
\begin{align}
\|h_k\|_{H^{\mu(d)}(\R^d)}\lesssim 2^{-10k}\big\| P_{2^{k}}\big(\chi_{\ge 1}f\big)\big\|_{L^2(\R^d)}.\label{est:remainer1}
\end{align}
Due to this estimate,  we only need to consider $\left(P_{2^k}\big(\chi_{\ge 1}f\big)\right)_{out,k-1\le \cdot \le k+1}$. Further, we write
\begin{align*}
\big(P_{2^k}&\big(\chi_{\ge 1}f\big)\big)_{out,k-1\le \cdot \le k+1}\\
&=\chi_{\le \frac14}\left(P_{2^k}\big(\chi_{\ge 1}f\big)\right)_{out,k-1\le \cdot \le k+1}+
\chi_{\ge \frac14}\left(P_{2^k}\big(\chi_{\ge 1}f\big)\right)_{out,k-1\le \cdot \le k+1}.
\end{align*}
From Lemma \ref{lem:supportf+}, we have
\begin{align}
\big\|\chi_{\le \frac 14} \big(P_{2^k}\big(\chi_{\ge 1}f\big)\big)_{out/in,k-1\le \cdot \le k+1}\big\|_{H^{\mu(d)}(\R^d)}
\lesssim
2^{-2k}\big\|P_{2^k}\big(\chi_{\ge 1}f\big)\big\|_{L^2(\R^d)}. \label{est:remainer2}
\end{align}
So it is left to consider $\chi_{\ge \frac14}\left(P_{2^k}\big(\chi_{\ge 1}f\big)\right)_{out,k-1\le \cdot \le k+1}$.  The first estimate is
\begin{prop}\label{prop:pre-outgoing}
Let $k\ge 0$ be an integer. Then there exists $\delta>0$, such that for any triple $(\gamma,q,r)$ satisfying 
that 
\begin{align}
q\ge 2,\,\, r> 2,\,\, 0\le \gamma\le 1,\,\, \frac2q+\frac {2d-1}r< \frac{2d-1}{2}, \label{paremater-condition}
\end{align}
the following estimate holds,
\begin{align*}
\Big\||\nabla|^\gamma\Big[&\chi_{\le \delta(1+2^k t)}e^{it\Delta}\Big(\chi_{\ge \frac14}\left(P_{2^k}\big(\chi_{\ge 1}f\big)\right)_{out,k-1\le \cdot \le k+1}\Big)\Big]\Big\|_{L^q_tL^r_x(\R^+\times\R^d)}\\
&\lesssim
2^{-\big(5-d-(\gamma-\frac 2q-\frac dr)\big)k}\big\|P_{2^k}\big(\chi_{\ge 1}f\big)\big\|_{L^2(\R^d)}.
\end{align*}
The same estimate holds when $e^{it\Delta}$ and $ _{out}$ are replaced by $e^{-it\Delta}$ and $ _{in}$, respectively.
\end{prop}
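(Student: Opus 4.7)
The plan is to realize $e^{it\Delta}\bigl[\chi_{2^j}(P_{2^k}(\chi_{\geq 1}f))_{out,k-1\leq\cdot\leq k+1}\bigr]$ as an explicit oscillatory integral and exploit non-stationarity of the combined phase on the ``inner'' region $\{|x|\leq\delta(2^j+2^k t)\}$. The crucial input is the outgoing restriction $|\theta|\geq\pi/6$ in the asymptotic representation of $J(\rho r)-K(\rho r)$ given in Corollary \ref{cor:Jr-osc2}: it produces an oscillatory factor $e^{+2\pi i\rho r\sin\theta}$ with $\sin\theta\geq 1/2$, which combined with the Schr\"odinger kernel phase $|x-y|^2/(4t)$ yields a phase that is quantitatively non-stationary away from the outward-propagating trajectory $\{|x|\sim 2^j + 2^k t\}$, producing rapid decay on the inner region.

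Concretely, set $F_k := P_{2^k}(\chi_{\geq 1}f)$ and $g_{j,k}(r):=\chi_{2^j}(r)(F_k)_{out,k-1\leq\cdot\leq k+1}(r)$. Unwinding Definition \ref{def:outgong-incoming} and Corollary \ref{cor:Jr-osc2} writes $g_{j,k}$, modulo a tail of size $O(\langle\rho r\rangle^{-5})$, as a double integral in $(\rho,\theta)$ of $e^{2\pi i\rho r\sin\theta}\chi_{\geq\pi/6}(\theta)\cos^{d-2}\theta\,\chi_{k-1\leq\cdot\leq k+1}(\rho)\rho^{d-1}\mathcal{F}F_k(\rho)$; similarly $\mathcal{F}F_k(\rho)$ admits an oscillatory representation over a source variable $s\geq 1$ with phase $-2\pi\rho s\sin\theta'$. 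Applying Lemma \ref{lem:formula-St} and using polar symmetry (representing the spherical integral as $\widehat{d\omega}(Rr/(2\pi t))$ and expanding it via Lemma \ref{lem:Jr}) presents $e^{it\Delta}g_{j,k}(x)$ as a multi-fold oscillatory integral with phase
$$\Phi=\tfrac{R^2+r^2}{4t}\pm\tfrac{Rr\sin\varphi}{2\pi t}+2\pi\rho r\sin\theta-2\pi\rho s\sin\theta',\quad R=|x|.$$
On the region $R\leq\delta(2^j+2^k t)$ with $r\sim 2^j$, $\rho\sim 2^k$, $\sin\theta\geq 1/2$, one computes
$$\partial_\rho\Phi=2\pi(r\sin\theta-s\sin\theta'),\qquad \partial_r\Phi = \tfrac{r\mp R\sin\varphi}{2t} + 2\pi\rho\sin\theta,$$
so that for $\delta$ small enough, $|\partial_r\Phi|\gtrsim\rho\sim 2^k$ in the regime $2^k t\gtrsim 2^j$, while $|\partial_\rho\Phi|\gtrsim r\sim 2^j$ in the regime $2^k t\ll 2^j$. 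A case split delivers the uniform gain $(2^j+2^k t)^{-1}$ per integration by parts in an appropriate variable.

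Carrying out sufficiently many integrations by parts (with the $\theta,\theta',\varphi$ integrations estimated via Lemma \ref{lem:L2-estimate-FIO} or Lemma \ref{lem:stationary-phase}) and extracting a factor $2^{\alpha k}$ for $|\nabla|^\alpha$ produces a pointwise bound
$$\bigl||\nabla|^\alpha e^{it\Delta}g_{j,k}(x)\bigr|\lesssim 2^{Ck}(2^j+2^k t)^{-N}\|F_k\|_{L^2},$$
with $C$ tracking elementary $k$-scaling from the representation. Taking the $L^\varrho_tL^\sigma_x$ norm over $\{|x|\leq\delta(2^j+2^k t)\}$, with volume $\sim(2^j+2^k t)^d$, and evaluating the resulting time integral via the substitution $u=2^k t/2^j$ yields a bound of the shape $2^{Ck}\cdot 2^{(d/\sigma-N+1/\varrho)j}\cdot 2^{-k/\varrho}\|F_k\|_{L^2}$, whose convergence forces $N>d/\sigma+1/\varrho$, equivalent to $2/\varrho+d/\sigma-\alpha>d-5$; with the correct choice of $N$ the bookkeeping in $k$ combines with $C$ and $-1/\varrho$ to produce the exponent $-(5-d-\alpha+2/\varrho+d/\sigma)$, and the $j$-exponent collapses to $-3$. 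The error tails of size $O(\langle\rho r\rangle^{-5})$ from Corollary \ref{cor:Jr-osc2}, together with the residual $h_k$ from Proposition \ref{prop:f++-highfreq}, are handled directly by Lemma \ref{lem:L2-estimate-FIO}, exactly as in the proofs of Propositions \ref{prop:f++-highfreq}--\ref{prop:bound-fpm-L2}.

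The main obstacle is the uniform phase lower-bound analysis underlying the integration-by-parts argument: the phase $\Phi$ contains two competing oscillatory components (the dispersive kernel $R^2/(4t)$ versus the outgoing phase $2\pi\rho r\sin\theta$), and establishing $|\partial_?\Phi|\gtrsim 2^k + 2^j/t$ uniformly across the full range of angular variables and both regimes $2^k t\gtrsim 2^j$ and $2^k t\ll 2^j$ requires a careful case split. It is essential that $\sin\theta\geq 1/2$ puts a definite sign on the outgoing contribution, preventing cancellation with the Schr\"odinger phase; this is precisely the point where the incoming/outgoing decomposition does its work, and it is what makes Proposition \ref{prop:out} powerful enough to absorb the space-time norms needed in the nonlinear argument of Section 5.
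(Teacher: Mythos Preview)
Your overall strategy---non-stationary phase on the inner region, driven by the sign-definiteness of $\sin\theta\ge\tfrac12$ coming from the outgoing restriction---is exactly right and matches the paper's. But your execution has a concrete gap and is substantially more complicated than necessary.

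The gap is your claimed bound $|\partial_\rho\Phi|=2\pi|r\sin\theta-s\sin\theta'|\gtrsim 2^j$ in the regime $2^kt\ll 2^j$. This is false: once you expand $\mathcal F F_k$ as an oscillatory integral over the source variable $s\ge 1$, the variable $s$ ranges over all of $[1,\infty)$, and whenever $s\sin\theta'$ crosses $r\sin\theta$ (which it does, since both $\sin\theta,\sin\theta'\ge\tfrac12$ and $r\sim 2^j\ge\tfrac14$) the $\rho$-derivative vanishes. So integration by parts in $\rho$ does not deliver the gain you need. In fact no case split is required at all: the $r$-derivative already gives the uniform bound $\partial_r\Phi\gtrsim r/t+\rho$ on the full inner region, since $|x|\le\delta(2^j+2^kt)$ forces the kernel contribution $(r\mp R\sin\varphi)/(2t)$ to be either positive or dominated by $2\pi\rho\sin\theta$.

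The paper's argument avoids these complications by \emph{not} expanding $\mathcal F F_k$ and \emph{not} expanding the spherical measure. It first splits $(F_k)_{out,k-1\le\cdot\le k+1}=F_1+F_2$ via Corollary~\ref{cor:Jr-osc1}, where $F_1$ carries the $O(\langle\rho r\rangle^{-5})$ tail and $F_2$ the main oscillatory piece. For $F_1$ it proves a direct Sobolev bound $\|\chi_{2^j}F_1\|_{\dot H^s}\lesssim 2^{-3j}2^{-(5-d/2-s)k}\|F_k\|_{L^2}$ (using Lemma~\ref{lem:Ff} to control $\|\chi_{\gtrsim 2^k}\mathcal F F_k\|_{L^2_\rho}$) and then applies ordinary Strichartz; this is where the exponents $-3j$ and $5-d$ actually originate, not from integration by parts. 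For $F_2$ it applies the kernel of Lemma~\ref{lem:formula-St} to the single building block $r^{-\beta}\chi_{2^j}(r)e^{2\pi i\rho r\sin\theta}$ with $\rho,\theta$ frozen, obtaining a one-variable phase $\phi(r)=-\tfrac{x\cdot\omega}{2t}r+\tfrac{r^2}{4t}+2\pi\rho r\sin\theta$; the lower bound $\phi'(r)\ge\tfrac14(r/t+\pi\rho)$ holds uniformly on $\{|x|\le\delta(2^j+2^kt)\}$ with no case split, and repeated integration by parts in $r$ (organized via Lemma~\ref{lem:muli-Lei-formula}, using that $\phi'''\equiv 0$) yields arbitrary polynomial decay. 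The remaining $\rho$-integral is then closed by Cauchy--Schwarz against $\|\chi_{\gtrsim 2^k}\mathcal F F_k\|_{L^2_\rho}$ from Lemma~\ref{lem:Ff}. Your suggestion to handle the tail ``directly by Lemma~\ref{lem:L2-estimate-FIO}'' misses that the $F_1$ piece is in fact the source of the stated exponent and requires the separate $\dot H^s$ estimate plus Strichartz.
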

\begin{proof}
We only consider the estimates on the ``outgoing" part, since the ``incoming" part can be treated in the same way.

First, from the definition,
\begin{align*}
\big(P_{2^k}&\big(\chi_{\ge 1}f\big)\big)_{out,k-1\le \cdot \le k+1}(r)\\
&=r^{-\beta}\int_0^{+\infty}\!\!\! \big(J(\rho r)-K(\rho r)\big)\chi_{2^{k}}(\rho) \rho^{d-1}\mathcal F \left(P_{2^k}\big(\chi_{\ge 1}f\big)\right)(\rho)\,d\rho.
\end{align*}
Here we use the notation $
\chi_{2^{k}}(\rho)=\chi_{2^{k-1}\le\cdot \le 2^{k+1}}(\rho)
$
again. 
We denote $\tilde \eta(\theta, r)$ as
\begin{align*}
\tilde \eta(\theta,  r)=\chi_{\le 2}( r)\chi_{\le \frac\pi6}(\theta) \cos^{d-2} \theta+\chi_{\ge 2}( r)\frac{1}{(2\pi i r)^5}\tilde\eta_d(\theta).
\end{align*}
Then
\begin{align}
\tilde \eta(\theta, r)=O(\langle r\rangle^{-5}).\label{est:tilde-eta}
\end{align}
Moreover, from Corollary \ref{cor:Jr-osc1},
\begin{align*}
J(r)&-K(r)=\int_0^{\frac\pi2} e^{2\pi i  r\sin \theta}\chi_{\ge \frac\pi6}(\theta)\cos^{d-2} \theta  \,d\theta
+\int_0^{\frac\pi2} e^{2\pi i r\sin \theta}\tilde \eta(\theta,  r) \,d\theta.
\end{align*}
Accordingly, we split $\left(P_{2^k}\big(\chi_{\ge 1}f\big)\right)_{out,k-1\le \cdot \le k+1}(r)$ into two parts, which are denoted by $F_1$ and $F_2$, as follows:
\begin{align*}
F_1(r)=r^{-\beta}\int_0^{+\infty}\!\!\! \int_0^{\frac\pi2}\!\! e^{2\pi i\rho r\sin \theta}\tilde \eta(\theta, \rho r) \,d\theta\>
\chi_{2^{k}}(\rho) \rho^{d-1}\mathcal F \left(P_{2^k}\big(\chi_{\ge 1}f\big)\right)(\rho)\,d\rho;
\end{align*}
and
\begin{align*}
F_2(r)=r^{-\beta}\int_0^{+\infty}\!\!\! \int_0^{\frac\pi2}\!\! e^{2\pi i \rho r\sin \theta}\chi_{\ge \frac\pi6}(\theta)\cos^{d-2} \theta  \,d\theta\>\chi_{2^{k}}(\rho) \rho^{d-1}\mathcal F \left(P_{2^k}\big(\chi_{\ge 1}f\big)\right)(\rho)\,d\rho.
\end{align*}

We consider $F_1$ first, and claim that for $s\in [0,3]$,
\begin{align}
\|\chi_{\ge \frac14}F_1\|_{\dot H^s(\R^d)}\lesssim 2^{-(5-\frac{d}{2}-s)k}\big\|P_{2^k}\big(\chi_{\ge 1}f\big)\big\|_{L^2(\R^d)}.\label{est:F2}
\end{align}
To prove \eqref{est:F2}, we need the following lemma.
\begin{lem}\label{lem:Ff} Let $d=3,4,5$, and $f\in L^2(\R^d)$, then
\begin{align*}
\big\|\chi_{\gtrsim 2^k}(\rho)\mathcal F \left(P_{2^k}\big(\chi_{\ge 1}f\big)\right)\big\|_{L^2_\rho}
\lesssim
2^{-\frac{d-1}{2}k}\big\|P_{2^k}\big(\chi_{\ge 1}f\big)\big\|_{L^2(\R^d)}.
\end{align*}
\end{lem}
\begin{proof}
Note that 
\begin{align}
\big\|\chi_{\gtrsim 2^k}(\rho)\mathcal F \left(P_{2^k}\big(\chi_{\ge 1}f\big)\right)\big\|_{L^2_\rho}
=&\big\||\xi|^{-\frac{d-1}{2}}\chi_{\gtrsim 2^k}(\xi)\mathcal F \left(P_{2^k}\big(\chi_{\ge 1}f\big)\right)(\xi)\big\|_{L^2_\xi(\R^d)}\notag\\
\lesssim& 
2^{-\frac{d-1}{2}k}\big\|\mathcal F \left(P_{2^k}\big(\chi_{\ge 1}f\big)\right)\big\|_{L^2_\xi(\R^d)}.\label{3.38-0801}
\end{align}
From \eqref{deformed-Fourier}, 
$$\mathcal F \left(P_{2^k}\big(\chi_{\ge 1}f\big)\right)
=
\mathscr F \left(|x|^\beta P_{2^k}\big(\chi_{\ge 1}f\big)\right).
$$
Then by the Plancherel identity, we obtain  
\begin{align*}
 \eqref{3.38-0801}
 =&
  2^{-\frac{d-1}{2}k}\big\| |x|^\beta P_{2^k}\big(\chi_{\ge 1}f\big)\big\|_{L^2_x(\R^d)}\\
 \le &
 2^{-\frac{d-1}{2}k}\big\| |x|^\beta \chi_{\le \frac14} P_{2^k}\big(\chi_{\ge 1}f\big)\big\|_{L^2_x(\R^d)}+2^{-\frac{d-1}{2}k}\big\| |x|^\beta \chi_{\ge \frac14} P_{2^k}\big(\chi_{\ge 1}f\big)\big\|_{L^2_x(\R^d)}.
\end{align*}
Note that $3\le d\le 5$, we have that $-1\le \beta\le 0$. Then  it is further bounded by 
$$
2^{-\frac{d-1}{2}k}\big\|\chi_{\le \frac14} P_{2^k}\big(\chi_{\ge 1}f\big)\big\|_{L^\infty(\R^d)}+2^{-\frac{d-1}{2}k}\big\|P_{2^k}\big(\chi_{\ge 1}f\big)\big\|_{L^2(\R^d)}.
$$  
From Lemma \ref{lem:mismatch},  it is controlled by  $2^{-\frac{d-1}{2}k}\big\|P_{2^k}\big(\chi_{\ge 1}f\big)\big\|_{L^2(\R^d)}$. 
This  gives the desired estimate.
\end{proof}

By \eqref{est:tilde-eta}, H\"older's inequality and  Lemma \ref{lem:Ff}, we get
\begin{align*}
\chi_{\ge \frac14}(r)|F_1(r)|
&\lesssim
r^{-\beta-5}\Big\|
\chi_{2^{k}}(\rho) \rho^{d-6}\Big\|_{L^2_\rho} \cdot \big\|\chi_{\gtrsim 2^k}(\rho)\mathcal F \left(P_{2^k}\big(\chi_{\ge 1}f\big)\right)\big\|_{L^2_\rho}\\
&\lesssim
r^{-\beta-5}2^{(d-\frac{11}{2})k}2^{-\frac{d-1}{2}k}\big\|P_{2^k}\big(\chi_{\ge 1}f\big)\big\|_{L^2(\R^d)}\\
&\lesssim
r^{-\beta-5}2^{(\frac{d}{2}-5)k}\big\|P_{2^k}\big(\chi_{\ge 1}f\big)\big\|_{L^2(\R^d)}.
\end{align*}
Therefore, we obtain
\begin{align*}
\|\chi_{\ge \frac14}F_1\|_{L^2(\R^d)}\lesssim 2^{(\frac{d}{2}-5)k}\big\|P_{2^k}\big(\chi_{\ge 1}f\big)\big\|_{L^2(\R^d)}.
\end{align*}
For the estimate on the high-order derivatives, we note that for $j=1,2,3$, 
$$
\partial_r^j\int_0^{+\infty}\!\!\! \int_0^{\frac\pi2}\!\! e^{2\pi i\rho r\sin \theta}\tilde \eta(\theta, \rho r) \,d\theta=  O( r^{-5}\rho^{-5+j}).
$$
Hence arguing similarly as above, we get that for $s=1,2,3$,
\begin{align*}
\|\chi_{\ge \frac14}F_1\|_{\dot H^s(\R^d)}\lesssim 2^{-(5-\frac{d}{2}-s)k}\big\|P_{2^k}\big(\chi_{\ge 1}f\big)\big\|_{L^2(\R^d)}.
\end{align*}
For general $s\in[0,3]$,  we obtain it by the interpolation.
This proves \eqref{est:F2}.

Using Lemma \ref{lem:frac_Hs}, we obtain that
\begin{align*}
\Big\||\nabla|^\gamma&\Big[\chi_{\le \delta(1+2^k t)}e^{it\Delta}\big(\chi_{\ge \frac14}F_1\big)\Big]\Big\|_{L^{q}_tL^r_x(\R^+\times\R^d)}
\lesssim
\Big\|\langle\nabla\rangle^\gamma e^{ it\Delta}\big(\chi_{\ge \frac14}F_1\big)\Big\|_{L^{q}_tL^r_x(\R^+\times\R^d)}.
\end{align*}
Then from   Lemma \ref{lem:radial-Str}, it can be controlled by
\begin{align*}
\|\chi_{\ge \frac14}F_1\|_{ H^s(\R^d)},\quad \mbox{ with }\quad s=\frac d2+\gamma-\frac 2q-\frac dr.
\end{align*}
Now applying \eqref{est:F2}, it gives
\begin{align}
\Big\||\nabla|^\gamma&\Big[\chi_{\le \delta(1+2^k t)}e^{it\Delta}\big(\chi_{\ge \frac14}F_1\big)\Big]\Big\|_{L^{q}_tL^r_x(\R^+\times\R^d)}\lesssim
2^{-\big(5-d-(\gamma-\frac 2q-\frac dr)\big)k}\big\|P_{2^k}\big(\chi_{\ge 1}f\big)\big\|_{L^2(\R^d)}.\label{est:F-1}
\end{align}

Now we consider $e^{it\Delta}\big(\chi_{\ge \frac14}F_2\big)$, which is equal to
\begin{align}
\int_0^{+\infty}\!\!\! &\int_0^{\frac\pi2}\!\! e^{it\Delta}\Big(r^{-\beta}\chi_{\ge \frac14}(r) e^{2\pi i \rho r\sin \theta}\Big)\chi_{\ge \frac\pi6}(\theta)\cos^{d-2} \theta  \,d\theta\notag\\
&\cdot\chi_{2^{k}}(\rho) \rho^{d-1}\mathcal F \left(P_{2^k}\big(\chi_{\ge 1}f\big)\right)(\rho)\,d\rho.\label{22.50}
\end{align}
To give its estimate, we consider the following term,
$$
e^{it\Delta}\Big(r^{-\beta}\chi_{\ge \frac14}(r) e^{2\pi i \rho r\sin \theta}\Big).
$$
Using the formula in Lemma \ref{lem:formula-St}, we have
\begin{align*}
e^{it\Delta}\Big(r^{-\beta}\chi_{\ge \frac14}(r) e^{2\pi i \rho r\sin \theta}\Big)
=\frac{c}{t^{\frac d2}}\int_{\R^d}e^{i\frac{|x-y|^2}{4t}+2\pi i \rho|y|\sin \theta}|y|^{-\beta}\chi_{\ge \frac14}(y) \,dy.
\end{align*}
We expand the phase and rewrite it as
\begin{align}
\frac{c}{t^{\frac d2}}e^{i\frac{|x|^2}{4t}}& \int_{\R^d}e^{-i\frac{x\cdot y}{2t}+i\frac{|y|^2}{4t}+2\pi i \rho|y|\sin \theta}|y|^{-\beta}\chi_{\ge \frac14}(y)\,dy\notag\\
=& \frac{c}{t^{\frac d2}}e^{i\frac{|x|^2}{4t}}\int_{|\omega|=1}\int_0^{+\infty} e^{i\phi(r)}r^{d-1-\beta}\chi_{\ge \frac14}(r)\,dr d\omega,\label{20.46}
\end{align}
where $\phi(r)=-\frac{x\cdot \omega}{2t}r+\frac{r^2}{4t}+2\pi \rho r\sin \theta$. Then
$$
\phi'(r)=-\frac{x\cdot \omega}{2t}+\frac{r}{2t}+2\pi\rho\sin \theta.
$$
Note that $\rho\sim  2^k$, $r\sim  1$ and $\sin \theta\ge\frac14$, so choosing $\delta$ small enough, we obtain that  when $|x|\le \delta(1+2^kt)$,
\begin{align}
\phi'(r)\ge  \frac14\Big(\frac{r}{t}+\pi\rho\Big). \label{est:psi'}
\end{align}
Moreover,
\begin{align}
\phi''(r)=\frac1{2t},\quad \mbox{and }\quad \phi^{(j)}(r)=0,\quad \mbox{for } j=3,4,\cdots. \label{est:psi''}
\end{align}
Hence we can use the formula,
\begin{align*}
e^{i\phi(r)}=\frac1{i\phi'(r)} \partial_r\big(e^{i\phi(r)}\big),
\end{align*}
and integrate by parts to obtain that for any $K\in\Z^+$, there exists $c_K\in \C$, such that
\begin{align}
e^{it\Delta}\Big(& r^{-\beta}\chi_{\ge \frac14}(r)e^{2\pi i \rho r\sin \theta}\Big)\notag\\
&=
\frac{c_K}{t^{\frac d2}}e^{i\frac{|x|^2}{4t}}\int_{|\omega|=1}\int_0^{+\infty}\!\!\! e^{i\phi(r)} \partial_r\Big(\frac{1}{\phi'(r)}\partial_r\Big)^{K-1}\Big[\frac{1}{\phi'(r)}r^{d-1-\beta}\chi_{\ge \frac14}(r)\Big]\,dr d\omega.
\label{22.44}
\end{align}
Using Lemma \ref{lem:muli-Lei-formula}, we have
\begin{align*}
\partial_r\Big(\frac{1}{\phi'(r)}&\partial_r\Big)^{K-1}\Big[\frac{1}{\phi'(r)}r^{d-1-\beta}\chi_{\ge \frac14}(r)\Big]
\\
&=\sum\limits_{\begin{subarray}{c}
l_1,\cdots,l_K\in\N,l'\in\N;\\
l_j\le
j;l_1+\cdots+l_K+l'=K
\end{subarray}}
\!\!\!C_{l_1,\cdots,l_K,l'}\partial_r^{l_1}\Big(\frac{1}{\phi'(r)}\Big)\cdots
\partial_r^{l_K}\Big(\frac{1}{\phi'(r)}\Big)\>\partial_r^{l'}\Big[r^{d-1-\beta}\chi_{\ge \frac14}(r)\Big].
\end{align*}
Since $\phi^{(j)}(r)=0$ when $j\ge 3$, we find that for any integer $l\ge 0$,
$$
\partial_r^{l}\Big(\frac{1}{\phi'(r)}\Big)
=\frac{(\phi''(r))^l}{(\phi'(r))^{l+1}},
$$
it implies that
$$
\partial_r^{l_1}\Big(\frac{1}{\phi'(r)}\Big)\cdots
\partial_r^{l_K}\Big(\frac{1}{\phi'(r)}\Big)
=\frac{(\phi''(r))^{l_1+\cdots+l_K}}{(\phi'(r))^{l_1+\cdots+l_K+K}}.
$$
This together with \eqref{est:psi'}, \eqref{est:psi''}, gives
\begin{align*}
\Big|\partial_r^{l_1}\Big(\frac{1}{\phi'(r)}\Big)\cdots
\partial_r^{l_K}\Big(\frac{1}{\phi'(r)}\Big)\Big|
\lesssim
\frac{1}{(r+\rho t)^{l_1+\cdots+l_K}\big(\frac{r}{t}+\rho\big)^K}.
\end{align*}
Moreover, we have
\begin{align*}
\Big|\partial_r^{l'}\Big[r^{d-1-\beta}\chi_{\ge \frac14}(r)\Big]\Big|
\lesssim
r^{d-1-\beta-l'}\chi_{\gtrsim 1}(r).
\end{align*}
Combining the above two estimates, and noting that $l_1+\cdots+l_K+l'=K$, we obtain that 
\begin{align*}
\Big|\partial_r^{l_1}\Big(\frac{1}{\phi'(r)}\Big)\cdots
\partial_r^{l_K}\Big(\frac{1}{\phi'(r)}\Big)\Big|\>\Big|\partial_r^{l'}\Big[r^{d-1-\beta}\chi_{\ge \frac14}(r)\Big]\Big|
\lesssim
\frac{1}{r^K\big(\frac{r}{t}+\rho\big)^K}r^{d-1-\beta}\chi_{\gtrsim 1}(r).
\end{align*}
Note that when $0\le t\le 1$, it is bounded by
$$
\frac{t^3}{r^{K+3}\rho^{K-3}}r^{d-1-\beta}\chi_{\gtrsim 1}(r);
$$
when $t\ge 1$, it is bounded by
$$
\frac{1}{r^K\rho^K}r^{d-1-\beta}\chi_{\gtrsim 1}(r).
$$
Hence, choosing $K$ suitably large, we get
$$
\Big|\partial_r\Big(\frac{1}{\phi'(r)}\partial_r\Big)^K\Big[\frac{1}{\phi'(r)}r^{d-1-\beta}\chi_{\ge \frac14}(r)\Big]\Big|
\lesssim t^3\langle t\rangle^{-3}r^{-10}\rho^{-10}\chi_{\gtrsim 1}(r).
$$
Inserting this estimate into \eqref{22.44}, we obtain that
\begin{align*}
\Big|\chi_{\le \delta(1+2^kt)}e^{it\Delta}\Big(& r^{-\beta}\chi_{\ge \frac14}(r)e^{2\pi i \rho r\sin \theta}\Big)\Big|
\lesssim
\langle t\rangle^{-\frac d2}\rho^{-10}.
\end{align*}
Hence, from this estimate and \eqref{22.50}, we obtain that
\begin{align*}
\Big|\chi_{\le \delta(1+2^kt)}e^{it\Delta}\big(\chi_{\ge \frac14}F_2\big)\Big|
\lesssim
\langle t\rangle^{-\frac d2}\int_0^{+\infty}\!\!\!\big|\chi_{2^{k}}(\rho) \rho^{d-11}\mathcal F \left(P_{2^k}\big(\chi_{\ge 1}f\big)\right)(\rho)\big|\,d\rho.
\end{align*}
Using H\"older's inequality, it is further controlled by
$$
\langle t\rangle^{-\frac d2} \Big\|
\chi_{2^{k}}(\rho) \rho^{d-11}\Big\|_{L^2_\rho} \cdot \big\|\chi_{\gtrsim 2^k}(\rho)\mathcal F \left(P_{2^k}\big(\chi_{\ge 1}f\big)\right)\big\|_{L^2_\rho}.
$$
Now by Lemma \ref{lem:Ff}, it is further bounded by
$$
\langle t\rangle^{-\frac d2} 2^{(d-\frac{21}{2})k}2^{-\frac{d-1}{2}k}\big\|P_{2^k}\big(\chi_{\ge 1}f\big)\big\|_{L^2(\R^d)}.
$$
That is,
\begin{align*}
\Big|\chi_{\le \delta(1+2^kt)}e^{it\Delta}\big(\chi_{\ge \frac14}F_2\big)\Big|
\lesssim
\langle t\rangle^{-\frac d2} 2^{(\frac d2-10)k}\big\|P_{2^k}\big(\chi_{\ge 1}f\big)\big\|_{L^2(\R^d)}.
\end{align*}
Note that  the structure of $e^{it\Delta}\big(\chi_{\ge \frac14}F_2\big)$ does not essentially change when we consider the estimates on the high-order derivatives, hence in the same manner, we obtain that for any even integer $\tilde{s}\in [0,10]$,
\begin{align*}
\Big||\nabla|^{\tilde{s}}\Big[\chi_{\le \delta(1+2^kt)}e^{it\Delta}\big(\chi_{\ge \frac14}F_2\big)\Big]\Big|
\lesssim
\langle t\rangle^{-\frac d2} 2^{(\frac d2-10)k}\big\|P_{2^k}\big(\chi_{\ge 1}f\big)\big\|_{L^2(\R^d)}.
\end{align*}
Therefore, by H\"older's inequality, if
$$
\frac 1q+\frac dr<\frac d2, \quad q\ge 2,\quad  r\ge 2 
$$
(note that this condition is implied by \eqref{paremater-condition}),
then we have
\begin{align*}
\Big\||\nabla|^{\tilde{s}}\Big[\chi_{\le \delta(1+2^k t)}e^{it\Delta}\big(\chi_{\ge \frac14}F_2\big)\Big]\Big\|_{L^q_tL^r_x(\R^+\times\R^d)}
\lesssim
 2^{(\frac d2-10+\frac dr)k}\big\|P_{2^k}\big(\chi_{\ge 1}f\big)\big\|_{L^2(\R^d)}.
 \end{align*}
Since $d\le 5$, by the interpolation, we obtain that for any $s\in [0,10]$, 
\begin{align}
\Big\||\nabla|^s\Big[\chi_{\le \delta(1+2^k t)}e^{it\Delta}\big(\chi_{\ge \frac14}F_2\big)\Big]\Big\|_{L^q_tL^r_x(\R^+\times\R^d)}
 \lesssim
 2^{-5k}\big\|P_{2^k}\big(\chi_{\ge 1}f\big)\big\|_{L^2(\R^d)}.\label{est:F-2}
\end{align}
Now collecting the estimates in \eqref{est:F-1} and \eqref{est:F-2}, we prove the proposition.
\end{proof}

The following result is the incoming/outgoing linear flow's estimate related to the outside region.
\begin{prop}\label{prop:outgoing} Let $k\ge 0$ be an integer. Moreover, let $ r, \gamma_1, \gamma_2, s$ be the parameters satisfying
$$
r>2,\quad, \gamma_1\ge0,\quad  \gamma_2\ge 0,\quad s+\frac1r\ge \frac12, \quad \gamma_1+s=d\big(\frac12-\frac1r\big).
$$
Then for any $t>0$,
\begin{align*}
\Big\||\nabla|^{\gamma_2}\Big[ &\chi_{\ge \delta(1+2^k t)}e^{it\Delta}\Big(\chi_{\ge \frac14}\left(P_{2^k}\big(\chi_{\ge 1}f\big)\right)_{out,k-1\le \cdot \le k+1}\Big)\Big]\Big\|_{L^{ r}(\R^d)}\\
&\lesssim
(1+2^k t)^{-\gamma_1}2^{(\gamma_2+s+)k}\big\|P_{2^k}\big(\chi_{\ge 1}f\big)\big\|_{L^2(\R^d)}.
\end{align*}
The same estimate holds when $e^{it\Delta}$ and $ _{out}$ are replaced by $e^{-it\Delta}$ and $ _{in}$, respectively.
\end{prop}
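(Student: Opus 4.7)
The plan is to reduce the $L^q$ exterior bound to an $L^2$ bound with $\beta+s$ derivatives via the radial Sobolev embedding of Lemma~\ref{lem:radial-Sob}, and then close using the unitarity of $e^{it\Delta}$ together with Corollary~\ref{cor:f+-Hs}. Throughout I will write $R:=\delta(2^{j}+2^{k}t)$, $F:=\big(P_{2^{k}}(\chi_{\ge 1}f)\big)_{out,\,k-1\le\cdot\le k+1}$, and $u:=e^{it\Delta}(\chi_{2^{j}}F)$; note that $u$ is radial since the datum is.

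The first step is to split spatially,
\begin{equation*}
\big\||\nabla|^{\beta}(\chi_{\ge R}u)\big\|_{L^{q}(\R^{d})}\le \big\|\chi_{\ge R/2}|\nabla|^{\beta}(\chi_{\ge R}u)\big\|_{L^{q}}+\big\|\chi_{\le R/2}|\nabla|^{\beta}(\chi_{\ge R}u)\big\|_{L^{q}},
\end{equation*}
and to handle the first, main term by inserting a weight via the pointwise bound $\chi_{\ge R/2}(x)\lesssim R^{-\alpha}|x|^{\alpha}$ on its support, followed by Lemma~\ref{lem:radial-Sob} applied to the radial function $|\nabla|^{\beta}(\chi_{\ge R}u)$ with $(p,q)=(2,q)$. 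The hypotheses $2<q<\infty$, $\alpha>0$, $0\le \beta+s<d/2$, $s+1/q\ge 1/2$ and $\alpha+s=d(1/2-1/q)$ translate directly into admissibility for Lemma~\ref{lem:radial-Sob}, and yield
\begin{equation*}
\big\|\chi_{\ge R/2}|\nabla|^{\beta}(\chi_{\ge R}u)\big\|_{L^{q}}\lesssim R^{-\alpha}\,\big\||\nabla|^{\beta+s}(\chi_{\ge R}u)\big\|_{L^{2}}.
\end{equation*}
Lemma~\ref{lem:frac_Hs} absorbs the factor $\chi_{\ge R}$, then the unitarity of the Schr\"odinger flow on $\dot H^{\beta+s}$ strips off $e^{it\Delta}$, and one more application of Lemma~\ref{lem:frac_Hs} (using $\chi_{2^{j}}=\chi_{2^{j}}\chi_{\ge 1/4}$, valid since $j\ge -2$) reduces matters to $\|\chi_{\ge 1/4}F\|_{\dot H^{\beta+s}}$. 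Since $\beta+s<d/2\le 5/2<4$ for $d\le 5$, Corollary~\ref{cor:f+-Hs} produces the bound $2^{(\beta+s)k}\|P_{2^{k}}(\chi_{\ge 1}f)\|_{L^{2}}$, which is exactly the desired right-hand side.

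It remains to control the mismatch term $\|\chi_{\le R/2}|\nabla|^{\beta}(\chi_{\ge R}u)\|_{L^{q}}$. After a Littlewood--Paley decomposition, the main contribution comes from frequencies $\sim 2^{k}$ (where $F$ is concentrated, as quantified by Lemma~\ref{lem:Ff}; the complementary pieces are $O(2^{-Nk})$ for any $N$ and are trivially absorbed), and the mismatch estimate \eqref{eqs:lem-mismath-1} of Lemma~\ref{lem:mismatch}, combined with the spatial separation $R\gtrsim 2^{-2}$, supplies polynomial decay in $R$ of arbitrarily large order, which is more than enough to fit under the target. The incoming analogue follows identically by replacing $e^{it\Delta}$ and $F_{out}$ with $e^{-it\Delta}$ and $F_{in}$.

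The main technical obstacle is this mismatch step: one must carefully track the frequency scale $2^{k}$ alongside the spatial separation $R$ so that the mismatch gain is quantitatively compatible with the target factor $(2^{j}+2^{k}t)^{-\alpha}2^{(\beta+s)k}$. Every other ingredient --- the radial Sobolev embedding (Lemma~\ref{lem:radial-Sob}), the fractional cutoff estimate (Lemma~\ref{lem:frac_Hs}), and the $\dot H^{s}$ bound (Corollary~\ref{cor:f+-Hs}) --- is already in place, so the argument is in effect an assembly of tools already developed.
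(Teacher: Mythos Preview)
Your main-term argument is essentially the paper's own: extract $R^{-\alpha}$ via radial Sobolev (Lemma~\ref{lem:radial-Sob}), peel off the cutoffs with Lemma~\ref{lem:frac_Hs}, use unitarity of $e^{it\Delta}$, and close with Corollary~\ref{cor:f+-Hs}. This matches the treatment of the paper's term \eqref{0.44-III} and in fact subsumes \eqref{0.44-I} as well (the paper separates off $P_{\le 1}$ first, you do not, and your route is slightly cleaner here).

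The gap is in your mismatch step. Two issues. First, the appeal to Lemma~\ref{lem:Ff} to claim that $u=e^{it\Delta}(\chi_{2^{j}}F)$ is frequency-localized at $\sim 2^{k}$ is not justified: Lemma~\ref{lem:Ff} concerns the \emph{deformed} Fourier transform of $P_{2^{k}}(\chi_{\ge 1}f)$, not the standard Fourier support of $\chi_{2^{j}}F$ (the spatial cutoff $\chi_{2^{j}}$ spreads the standard frequency support). Second, estimate~\eqref{eqs:lem-mismath-1} is stated only for $P_{\le M}$ with $M\le 1$ and gives a \emph{fixed} exponent $A^{-\sigma-d/r+d/q}$, not arbitrarily large polynomial decay; as written, your invocation does not deliver what you claim.

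The paper avoids both problems by splitting \emph{the operator} $|\nabla|^{\beta}$, not the function $u$, via $P_{\le 1}$ and $P_{\ge 1}$: for the $P_{\le 1}$ piece, Bernstein kills $|\nabla|^{\beta}$ outright and radial Sobolev applies to $\chi_{\ge R}u$ directly (no mismatch needed at all); for the $P_{\ge 1}$ piece, the symbol $|\xi|^{\beta}\chi_{\ge 1}(\xi)$ is smooth, so $P_{\ge 1}|\nabla|^{\beta}$ has a rapidly decaying kernel and the spatial separation $\sim R$ yields $R^{-M}$ for any $M$. If you reorganize your tail term along these lines, your proof goes through; as it stands, the mismatch paragraph does not.
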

\begin{proof} As before, we only consider the estimate on the ``outgoing'' part.   Moreover, we only need to consider the case of $r<\infty$. Indeed, when $r=\infty$, then by the Sobolev inequality,
\begin{align*}
\Big\||\nabla|^{\gamma_2}\Big[ &\chi_{\ge \delta(1+2^k t)}e^{it\Delta}\Big(\chi_{\ge \frac14}\left(P_{2^k}\big(\chi_{\ge 1}f\big)\right)_{out,k-1\le \cdot \le k+1}\Big)\Big]\Big\|_{L^{\infty}(\R^d)}\\
&\lesssim
\Big\||\nabla|^{\gamma_2}\langle\nabla\rangle^{0+}\Big[\chi_{\ge \delta(1+2^k t)}e^{it\Delta}\Big(\chi_{\ge \frac14}\left(P_{2^k}\big(\chi_{\ge 1}f\big)\right)_{out,k-1\le \cdot \le k+1}\Big)\Big]\Big\|_{L^{\infty-}(\R^d)}.
\end{align*}
Hence we only need to consider the case of $r<\infty$, by replacing $\gamma_2$ by $\gamma_2+$ if necessary. 

First, we split
$$|\nabla|^{\gamma_2}\Big[\chi_{\ge \delta(1+2^k t)} e^{it\Delta}\Big(\chi_{\ge \frac14}\left(P_{2^k}\big(\chi_{\ge 1}f\big)\right)_{out,k-1\le \cdot \le k+1}\Big)\Big]$$
into the following three terms,
\begin{subequations}
\begin{align}
 &P_{\le1}|\nabla|^{\gamma_2}\Big[\chi_{\ge \delta(1+2^k t)} e^{it\Delta}\Big(\chi_{\ge \frac14}\left(P_{2^k}\big(\chi_{\ge 1}f\big)\right)_{out,k-1\le \cdot \le k+1}\Big)\Big]\label{0.44-I}\\
&\quad+\chi_{\le \delta^2(1+2^kt)}P_{\ge1} |\nabla|^{\gamma_2}\Big[\chi_{\ge \delta(1+2^k t)} e^{it\Delta}\Big(\chi_{\ge \frac14}\left(P_{2^k}\big(\chi_{\ge 1}f\big)\right)_{out,k-1\le \cdot \le k+1}\Big)\Big]\label{0.44-II}\\
&\qquad+\chi_{\ge \delta^2(1+2^kt)} P_{\ge1}|\nabla|^{\gamma_2}\Big[\chi_{\ge \delta(1+2^k t)} e^{it\Delta}\Big(\chi_{\ge \frac14}\left(P_{2^k}\big(\chi_{\ge 1}f\big)\right)_{out,k-1\le \cdot \le k+1}\Big)\Big].\label{0.44-III}
\end{align}
\end{subequations}

Now we consider the first term \eqref{0.44-I}. From Lemma \ref{lem:radial-Sob}, under the assumptions, we have
$$
\big\||x|^{\gamma_1} g\big\|_{L^ r(\R^d)}\lesssim \|g\|_{\dot H^s(\R^d)}.
$$
Hence, this last estimate combined with the Bernstein inequality and Lemma \ref{lem:strichartz}, gives that
\begin{align*}
\big\|\eqref{0.44-I}\big\|_{L^{ r}(\R^d)}
\lesssim & \Big\|\chi_{\ge \delta(1+2^k t)} e^{it\Delta}\Big(\chi_{\ge \frac14}\left(P_{2^k}\big(\chi_{\ge 1}f\big)\right)_{out,k-1\le \cdot \le k+1}\Big)\Big\|_{L^{ r}(\R^d)}\notag\\
&\,\,\lesssim \big(1+2^k t\big)^{-\gamma_1} \Big\|e^{it\Delta}\Big(\chi_{\ge \frac14}\left(P_{2^k}\big(\chi_{\ge 1}f\big)\right)_{out,k-1\le \cdot \le k+1}\Big)\Big\|_{\dot H^{s}(\R^d)}\notag\\
 &\,\,  \lesssim\big(1+2^k t\big)^{-\gamma_1} \left\|\chi_{\ge \frac14}\left(P_{2^k}\big(\chi_{\ge 1}f\big)\right)_{out,k-1\le \cdot \le k+1}\right\|_{\dot H^{s}(\R^d)}.
\end{align*}
Therefore, by Corollary  \ref{cor:f+-Hs}, we get
\begin{align*}
\big\|\eqref{0.44-I}\big\|_{L^{ r}(\R^d)}
\lesssim &  \big(1+2^k t\big)^{-\gamma_1} 2^{sk}\big\|P_{2^k}\big(\chi_{\ge 1}f\big)\big\|_{L^2(\R^d)}.
\end{align*}

For the second term \eqref{0.44-II}, by Lemma \ref{lem:mismatch}, we have that for any $M>0$, $K\ge 1$,
$$
\Big\|\chi_{\le \delta K} P_{\ge1} |\nabla|^{\gamma_2}\big(\chi_{\ge K}g\big)\Big\|_{L^ r(\R^d)}
\lesssim_{\delta,M} K^{-M} \big\|\chi_{\ge K}g\big\|_{L^ r(\R^d)}.
$$
Then using the inequality above and treating similarly as \eqref{0.44-I}, we obtain that
\begin{align*}
\big\|\eqref{0.44-II}\big\|_{L^{ r}(\R^d)}
\lesssim_M &
(1+2^k t)^{-M}\Big\|\chi_{\ge \delta(1+2^k t)} e^{it\Delta}\Big(\chi_{\ge \frac14}\left(P_{2^k}\big(\chi_{\ge 1}f\big)\right)_{out,k-1\le \cdot \le k+1}\Big)\Big\|_{L^{ r}(\R^d)}\\
\lesssim_M &
(1+2^k t)^{-M-\gamma_1}\big\|\chi_{\ge \frac14}\left(P_{2^k}\big(\chi_{\ge 1}f\big)\right)_{out,k-1\le \cdot \le k+1}\big\|_{\dot H^s(\R^d)}\\
\lesssim_M &
\big(1+2^k t\big)^{-M-\gamma_1} 2^{sk}\big\|P_{2^k}\big(\chi_{\ge 1}f\big)\big\|_{L^2(\R^d)}.
\end{align*}

For the third term  \eqref{0.44-III}, we use the similar argument as above, and obtain that
\begin{align*}
\big\|\eqref{0.44-III}\big\|_{L^{ r}(\R^d)}
\lesssim
&(1+2^k t)^{-\gamma_1}
\Big\|\chi_{\ge \delta(1+2^k t)} 
e^{it\Delta}\Big(\chi_{\ge \frac14}\left(P_{2^k}\big(\chi_{\ge 1}f\big)\right)_{out,k-1\le \cdot \le k+1}\Big)
\Big\|_{\dot H^{\gamma_2+s}(\R^d)}.
\end{align*}
From Lemma \ref{lem:frac_Hs} and Lemma \ref{lem:strichartz}, it is further controlled by
\begin{align*}
(1+2^k t)^{-\gamma_1}\Big\|\chi_{\ge \frac14}\left(P_{2^k}\big(\chi_{\ge 1}f\big)\right)_{out,k-1\le \cdot \le k+1}
\Big\|_{H^{\gamma_2+s}(\R^d)}.
\end{align*}
Now using Corollary \ref{cor:f+-Hs}, we get
\begin{align*}
\big\|\eqref{0.44-III}\big\|_{L^{ r}(\R^d)}
\lesssim
(1+2^k t)^{-\gamma_1} 2^{(\gamma_2+s)k}\big\|P_{2^k}\big(\chi_{\ge 1}f\big)\big\|_{L^2(\R^d)}.
\end{align*}
Combining the above estimates on \eqref{0.44-I}--\eqref{0.44-III}, we finish the proof of the proposition.
\end{proof}

Now an easy consequence of this proposition is the following space-time estimate.
\begin{cor}\label{cor:outgoing}
Let $(\gamma,q,r)$ be the triple satisfying
\begin{align}
\gamma\ge 0, \,\, q\ge 1,\,\, r> 2,\,\, \frac1q<(d-1)\big(\frac12-\frac1r\big),\label{paremater-condition-2}
\end{align}
then the following estimate holds,
\begin{align}
\Big\||\nabla|^{\gamma}\Big[&\chi_{\ge \delta(1+2^k t)}e^{it\Delta}\Big(\chi_{\ge \frac14}\left(P_{2^k}\big(\chi_{\ge 1}f\big)\right)_{out,k-1\le \cdot \le k+1}\Big)\Big]\Big\|_{L^q_tL^r_x(\R^+\times\R^d)}\notag\\
&\lesssim
2^{(-\frac1q-\frac1r+\frac12+\gamma+) k}\big\|P_{2^k}\big(\chi_{\ge 1}f\big)\big\|_{L^2(\R^d)}.\label{est:out-1}
\end{align}
Moreover, for any $\delta>0$, 
\begin{align}
\Big\||\nabla|^{\gamma}\Big[&\chi_{\ge \delta(1+2^k t)}e^{it\Delta}\Big(\chi_{\ge \frac14}\left(P_{2^k}\big(\chi_{\ge 1}f\big)\right)_{out,k-1\le \cdot \le k+1}\Big)\Big]\Big\|_{L^2_tL^\infty_x([\delta,+\infty)\times\R^d)}\notag\\
&\lesssim_\delta
2^{(-\frac{d-2}2+\gamma+) k}\big\|P_{2^k}\big(\chi_{\ge 1}f\big)\big\|_{L^2(\R^d)}.\label{est:out-2}
\end{align}
The same estimate holds when $e^{it\Delta}$ and $ _{out}$ are replaced by $e^{-it\Delta}$ and $ _{in}$, respectively.
\end{cor}
\begin{proof}
From Proposition \ref{prop:outgoing}, we have
\begin{align}
\Big\||\nabla|^{\gamma}\Big[&\chi_{\ge \delta(1+2^k t)}e^{it\Delta}\Big(\chi_{\ge \frac14}\left(P_{2^k}\big(\chi_{\ge 1}f\big)\right)_{out,k-1\le \cdot \le k+1}\Big)\Big]\Big\|_{L^q_tL^r_x(\R^+\times\R^d)}\notag\\
&\lesssim
2^{(\gamma+s+)k}\big\|(1+2^k t)^{-\gamma_1}\big\|_{L^q_t(\R^+)}\big\|P_{2^k}\big(\chi_{\ge 1}f\big)\big\|_{L^2(\R^d)},
\label{11.47}
\end{align}
where  the parameters 
$$
s=\frac12-\frac1r,\quad \gamma_1= (d-1)\big(\frac12-\frac1r\big).
$$
By the condition \eqref{paremater-condition-2}, we have that $\frac1q<\gamma_1$, and thus
\begin{align*}
\big\|(1+2^k t)^{-\gamma_1}\big\|_{L^q_t(\R^+)}
\lesssim
2^{-\frac1q k}.
\end{align*}
Combining this estimate with \eqref{11.47}, we prove \eqref{est:out-1}. Since
\begin{align*}
\big\|(1+2^k t)^{-\gamma_1}\big\|_{L^q_t([\delta,+\infty))}
\lesssim_\delta
2^{-\gamma_1 k},
\end{align*}
we have \eqref{est:out-2}. This finishes the proof of the corollary.
\end{proof}

\subsection{Improved Strichartz's estimates}
Now we collect the estimates above, and obtain the following results.
\begin{prop}\label{prop:f+} Let $N\ge 1, s_0> 0$. Suppose that $(s,q,r)$ is the triple satisfying that $(q,r)=(\infty,2)$ or 
\begin{align*}
q\ge 2,\,\, r\ge 2,\,\,  
\quad \frac1{q}<(d-1)\big(\frac12-\frac1{r}\big);
\end{align*}
\begin{align*}
0\le  s\le 1, \quad s+\frac d2-\left(\frac2q+\frac dr\right)\le \mu(d);
\end{align*}
and
\begin{align*}
\gamma_0=s_0-s-\max\left\{(d-5)-\left(\frac 2{q}+\frac d{r}\right), \frac12-\frac1{q}-\frac1{r}\right\}>0.
\end{align*}
Then  
 \begin{align}\label{est:Lqr_out_in}
 \left\||\nabla|^{s}e^{it\Delta} \left(P_{\ge N}\big(\chi_{\ge 1}f\big)\right)_{out}\right\|_{L^{q}_t L^{r}_x(\R^{+}\times\R^d)}
\lesssim
N^{-\gamma_0+}\|P_{\ge N}\chi_{\ge 1}f\|_{H^{s_0}(\R^d)}.
\end{align}
Moreover, for any $\delta$,
 \begin{align}\label{est:Lqr_out_in-2}
 \left\|\nabla e^{it\Delta} \left(P_{\ge N}\big(\chi_{\ge 1}f\big)\right)_{out}\right\|_{L^{2}_t L^\infty_x([\delta,+\infty)\times\R^d)}
\lesssim
N^{1-s_0-\frac{d-2}2+}\|P_{\ge N}\chi_{\ge 1}f\|_{H^{s_0}(\R^d)}.
\end{align}
The same estimate holds when $e^{it\Delta}$ and $ _{out}$ are replaced by $e^{-it\Delta}$ and $ _{in}$, respectively.
\end{prop}
\begin{remark}\label{rem:par}
Now we list some triples $(s,q,r)$ which satisfy the conditions in Proposition \ref{prop:f+}  and will be used later. 
First, for any 
$
r> \frac{2(d-1)}{d-2}, 0\le s\le 1, s_0>s-\frac1r,
$
we have 
\begin{align}\label{L2Lr}
\left\||\nabla|^se^{it\Delta} \left(P_{\ge N}\big(\chi_{\ge 1}f\big)\right)_{out}\right\|_{L^2_tL^r_x(\R^{+}\times\R^d)}
\lesssim N^{s-s_0-\frac1r+}\big\|P_{\ge N}\chi_{\ge 1}f\big\|_{H^{s_0}(\R^d)}.
\end{align}
Second, for any $r>2,s_0>\frac12-\frac1r$, 
\begin{align}\label{LinftyLr}
\left\|e^{it\Delta} \left(P_{\ge N}\big(\chi_{\ge 1}f\big)\right)_{out}\right\|_{L^\infty_tL^r_x(\R^{+}\times\R^d)}
\lesssim N^{-s_0+\frac12-\frac1r+}\big\|P_{\ge N}\chi_{\ge 1}f\big\|_{H^{s_0}(\R^d)}.
\end{align}
Third, for any $p\ge \max\{1,\frac4d\}$ and any $s_0>\frac12-\frac{d+2}{2dp}$, 
\begin{align}\label{L2pLdp}
\left\|e^{it\Delta} \left(P_{\ge N}\big(\chi_{\ge 1}f\big)\right)_{out}\right\|_{L^{2p}_tL^{dp}_x(\R^{+}\times\R^d)}
\lesssim N^{-s_0+\frac12-\frac{d+2}{2dp}+}\big\|P_{\ge N}\chi_{\ge 1}f\big\|_{H^{s_0}(\R^d)}.
\end{align}
\end{remark}
\begin{proof}[Proof of Proposition \ref{prop:f+}]
Again, we only consider the estimates on the ``outgoing" part, since the ``incoming" part can be treated in the same way.
Moreover, by choosing $s<s_0$, the case of $(q,r)=(\infty,2)$ follows from Corollary \ref{cor:f+-Hs}. Hence, we only consider the case when $r>2$. 
Let $N=2^{k_0}$ for some $k_0\in \N$. 

First, we recall the reductions given at the beginning of Subsection \ref{sec:In/out-linearflow}. We write
\begin{align*}
e^{it\Delta}\big(P_{\ge N} &\chi_{\ge 1}f\big)_{out}
=\sum\limits_{k=k_0}^\infty e^{it\Delta} \left(P_{2^k}\big(\chi_{\ge 1}f\big)\right)_{out}\\
=&\sum\limits_{k=k_0}^\infty e^{it\Delta}\Big(\chi_{\le \frac14}\left(P_{2^k}\big(\chi_{\ge 1}f\big)\right)_{out,k-1\le \cdot \le k+1}+h_k\Big)\\
&\qquad +\sum\limits_{k=k_0}^\infty e^{it\Delta}\Big( \chi_{\ge \frac14}\left(P_{2^k}\big(\chi_{\ge 1}f\big)\right)_{out,k-1\le \cdot \le k+1}\Big).
\end{align*}

Let $(s,q,r)$ be the triple satisfying 
$$
s\ge 0,\quad \frac2q+\frac {2d-1}r< \frac{2d-1}{2},\quad s+\frac d2-\left(\frac2q+\frac dr\right)\le \mu(d).
$$
In particular,  $(s,q,r)=(2,2,\frac{2d}{d-2})$ verifies the condition above. 
Then we use Lemma \ref{lem:radial-Str}, \eqref{est:remainer1} and \eqref{est:remainer2}, to get
\begin{align}
\sum\limits_{k=k_0}^\infty \Big\|e^{it\Delta}&|\nabla|^{s}\Big(\chi_{\le \frac14}\left(P_{2^k}\big(\chi_{\ge 1}f\big)\right)_{out,k-1\le \cdot \le k+1}+h_k\Big)\Big\|_{L^{q}_tL^{r}_x(\R^{+}\times\R^d)}\notag\\
\lesssim &
\sum\limits_{k=k_0}^\infty\left( \big\|\chi_{\le \frac14}\left(P_{2^k}\big(\chi_{\ge 1}f\big)\right)_{out,k-1\le \cdot \le k+1}\big\|_{H^{\mu(d)}(\R^d)}+\big\|h_k\big\|_{H^{\mu(d)}(\R^d)}\right)\notag\\
 &\quad \lesssim
\sum\limits_{k=k_0}^\infty 2^{-2k}\big\|P_{2^k}\big(\chi_{\ge 1}f\big)\big\|_{L^2(\R^d)}
\lesssim \big\|P_{\ge N}\chi_{\ge 1}f\big\|_{H^{-1}(\R^d)}.\label{15.09}
\end{align}


Furthermore, we write
\begin{align*}
\sum\limits_{k=k_0}^\infty &e^{it\Delta}\Big( \chi_{\ge \frac14}\left(P_{2^k}\big(\chi_{\ge 1}f\big)\right)_{out,k-1\le \cdot \le k+1}\Big)\\
= &
\sum\limits_{k=k_0}^\infty \chi_{\le \delta(1+2^k t)}e^{it\Delta}\Big(\chi_{\ge \frac14}\left(P_{2^k}\big(\chi_{\ge 1}f\big)\right)_{out,k-1\le \cdot \le k+1}\Big)\\
&\qquad +\sum\limits_{k=k_0}^\infty \chi_{\ge \delta(1+2^k t)}e^{it\Delta}\Big(\chi_{\ge \frac14}\left(P_{2^k}\big(\chi_{\ge 1}f\big)\right)_{out,k-1\le \cdot \le k+1}\Big).
\end{align*}
On one hand, from Proposition \ref{prop:pre-outgoing}, we have that for any $(s,q,r)$ satisfying \eqref{paremater-condition} (note that the condition \eqref{paremater-condition} is implied by the conditions in this proposition), 
\begin{align*}
\sum\limits_{k=k_0}^\infty & \Big\||\nabla|^{s}\Big[\chi_{\le \delta(1+2^k t)}e^{it\Delta}\Big(\chi_{\ge \frac14}\left(P_{2^k}\big(\chi_{\ge 1}f\big)\right)_{out,k-1\le \cdot \le k+1}\Big)\Big]\Big\|_{L^{q}_tL^{r}_x(\R^{+}\times\R^d)}\\
\lesssim &
\sum\limits_{k=k_0}^\infty 2^{-\big(5-d-(s-\frac 2{q}-\frac d{r})\big)k}
\big\|P_{2^k}\big(\chi_{\ge 1}f\big)\big\|_{L^2(\R^d)}
\lesssim N^{-\gamma_0+}\|P_{\ge N}\chi_{\ge 1}f\|_{H^{s_0}(\R^d)}.
\end{align*}

On the other hand, from Corollary \ref{cor:outgoing}, we obtain that
\begin{align*}
\sum\limits_{k=k_0}^\infty & \Big\||\nabla|^{s}\Big[\chi_{\ge \delta(1+2^k t)}e^{it\Delta}\Big(\chi_{\ge \frac14}\left(P_{2^k}\big(\chi_{\ge 1}f\big)\right)_{out,k-1\le \cdot \le k+1}\Big)\Big]\Big\|_{L^{q}_tL^{r}_x(\R^{+}\times\R^d)}\\
\lesssim &
\sum\limits_{k=k_0}^\infty 2^{(-\frac1{q}-\frac1{r}+\frac12+s+) k}
\big\|P_{2^k}\big(\chi_{\ge 1}f\big)\big\|_{L^2(\R^d)}
\lesssim N^{-\gamma_0+}\|P_{\ge N}\chi_{\ge 1}f\|_{H^{s_0}(\R^d)};
\end{align*}
and similarly, 
\begin{align*}
\sum\limits_{k=k_0}^\infty & \Big\|\nabla\Big[\chi_{\ge \delta(1+2^k t)}e^{it\Delta}\Big(\chi_{\ge \frac14}\left(P_{2^k}\big(\chi_{\ge 1}f\big)\right)_{out,k-1\le \cdot \le k+1}\Big)\Big]\Big\|_{L^{2}_t L^\infty_x([\delta,+\infty)\times\R^d)}\\
&\lesssim N^{1-s_0-\frac{d-2}2+}\|P_{\ge N}\chi_{\ge 1}f\|_{H^{s_0}(\R^d)}.
\end{align*}
Then collecting the estimates above, we give the desired estimate and thus complete the proof of the proposition.
\end{proof}


\section{Proof of the Theorem \ref{thm:main1}}
\label{sec:proof_of_THM1}

In this section, we are ready to prove Theorem \ref{thm:main1}. To do this, we need the following preliminary.

\subsection{Definitions of the modified incoming and outgoing components}
With the preparations in the previous sections, we can define the modified incoming and outgoing components, say $f_+$ and $f_-$, of the function $f$.
First of all, we split the function $f$ as follows,
$$
f=P_{\le N}f+P_{\ge N}\chi_{\le 1}f+P_{\ge N}\chi_{\ge 1}f.
$$
\begin{definition}\label{def:+--I}
Let the radial function $f\in \mathcal S  (\R^d)$.
We define the modified outgoing component of $f$ as
$$
f_+=\frac12P_{\le N}f+\frac12P_{\ge N}\chi_{\le 1}f+\big(P_{\ge N}\chi_{\ge 1}f\big)_{out};
$$
the modified incoming component of $f$ as
$$
f_-=\frac12P_{\le N}f+\frac12P_{\ge N}\chi_{\le 1}f+\big(P_{\ge N}\chi_{\ge 1}f\big)_{in}.
$$
\end{definition}
From the definitions, we have
$$
f=f_++f_-.
$$
Moreover, Definition \ref{def:+--I} combining with Proposition \ref{prop:f+} gives the proof of Proposition \ref{prop:out}.

To prove Theorem \ref{thm:main1}, we only consider the outgoing part, since the incoming part can be  treated in the same way. That is, initial data  $u_0=f_+$.



Denote $v_L=e^{it\Delta}\big(P_{\ge N}\chi_{\ge 1}f \big)_{out}$. Let $w=u-v_L$, then $w$ obeys the equation
 \begin{equation}\label{13.24}
   \left\{ \aligned
    &i\partial_{t}w+\Delta w=|u|^pu,
    \\
    &w(0,x)  =w_0(x),
   \endaligned
  \right.
 \end{equation}
 where
 $$
 w_0=\frac12P_{\le N}f+\frac12P_{\ge N}\chi_{\le 1}f.
 $$
Then by Bernstein's inequality, and \eqref{condition:thm}  (in which $\varepsilon_0=1$), we have
\begin{align}
\big\|w_0\big\|_{H^1(\R^d)}\lesssim N^{1-s_0}, \label{w0}
\end{align}
where the implicit constant depends on $\|\chi_{\le 1}f\|_{H^1(\R^d)}$ and $\|\chi_{\ge 1}f\|_{H^{s_0}(\R^d)}$.

The first result is the following local well-posedness theorem, in which the indices are not sharp but enough for this paper.
\begin{prop} Let $d=3,4,5$. Then there exists $p_1(d)<\frac{4}{d-2}$, such that for any $s_0\ge \frac12$ and any $p\in [p_1,\frac{4}{d-2})$, the following is true. Let $f$ be the function under the same hypothesis on Theorem \ref{thm:main1} with $\varepsilon_0=1$ and $w_0\in H^1(\R^d)$, then there exists $T_{max}>0$, such that the Cauchy problem  \eqref{13.24} is locally well-posed on $[0,T_{max})$.
Moreover, the blowup criterion holds: If $T_{max}<\infty$, then
$$
\lim\limits_{t\to T_{max}}\|w(t)\|_{H^1(\R^d)}=+\infty.
$$
\end{prop}
\begin{proof}
It follows from the standard fixed point argument, and thus the proof is much brief. According to the Duhamel formula, we denote
$$
\Phi(w)(t)=e^{it\Delta}w_0+\int_0^t e^{i(t-s)\Delta} |u(s)|^pu(s)\,ds.
$$
For short, we further denote the parameters
$$
\frac1{\tilde r}=\frac 12-\frac{3d-2}{d(2d-1)}-\frac{\varepsilon} d;\quad \tilde s =1-\frac{d-1}{2d-1}-\varepsilon,
$$
where $\varepsilon$ is an arbitrary small positive constant.
Fixing $T>0$, we denote the norm
$$
\big|\!\big|\!\big|w\big|\!\big|\!\big|:=\|w\|_{L^\infty_tH^1_x([0,T)\times \R^d)}+\big\|\langle \nabla\rangle^{\tilde s} w\big\|_{L^2_tL^{\tilde r}_x([0,T)\times \R^d)}.
$$

From Lemma \ref{lem:radial-Str} and Lemma \ref{lem:Frac_Leibniz}, we have
\begin{align}
\big|\!\big|\!\big|\Phi(w)\big|\!\big|\!\big|\lesssim & \|w_0\|_{H^1_x(\R^d)}+\big\|\langle \nabla\rangle^{\tilde s} \big(|u|^pu\big)\big\|_{L^2_tL^{\frac{2d}{d+2\tilde s}}_x([0,T)\times \R^d)}\notag\\
\lesssim &
\|w_0\|_{H^1_x(\R^d)}+\big\|\langle \nabla\rangle^{\tilde s}v_L\big\|_{L^2_tL^\infty_x([0,T)\times \R^d)}\left(\|v_L\|_{L^\infty_tL^{\frac{2dp}{d+2\tilde s}}_x([0,T)\times \R^d)}^p+\|w\|_{L^\infty_tL^{\frac{2dp}{d+2\tilde s}}_x([0,T)\times \R^d)}^p\right)\notag\\
&
\quad+\big\|\langle \nabla\rangle^{\tilde s}w\big\|_{L^\infty_tL^{r_1}_x([0,T)\times \R^d)}\left(\|v_L\|_{L^{2p}_tL^{dp}_x([0,T)\times \R^d)}^p+\|w\|_{L^{2p}_tL^{dp}_x([0,T)\times \R^d)}^p\right),\label{18.16}
\end{align}
where $r_1$ is the parameters satisfying
$$
 \frac1{r_1}=\frac12-\frac1d(1-\tilde s).
$$
Note that when $p_1(d)$ is suitably close to $\frac4{d-2}$, we have $2<r_1, \frac{2dp}{d+2\tilde s}<\frac{2d}{d-2}$.
From \eqref{L2Lr}--\eqref{L2pLdp}, we have that for any $s_0\ge \frac12$, 
\begin{align*}
\big\|\langle \nabla\rangle^{\tilde s}v_L\big\|_{L^2_tL^{\infty}_x(\R^+\times \R^d)}+
\|v_L\|_{L^\infty_tL^{\frac{2dp}{d+2\tilde s}}_x(\R^+\times \R^d)}
+\|v_L\|_{L^{2p}_tL^{dp}_x(\R^+\times \R^d)}\lesssim \|f\|_{H^{s_0}(\R^d)},
\end{align*}
Hence, we get 
\begin{align*}
\big\|\langle \nabla\rangle^{\tilde s}v_L&\big\|_{L^2_tL^\infty_x([0,T)\times \R^d)}+\|v_L\|_{L^\infty_tL^{\frac{2dp}{d+2\tilde s}}_x([0,T)\times \R^d)}
+\|v_L\|_{L^{2p}_tL^{dp}_x([0,T)\times \R^d)}=r(T),
\end{align*}
where $r(T)\to 0$ when $T\to 0$. Further, by the Sobolev inequality and interpolation, we have
\begin{align*}
\|w\|_{L^\infty_tL^\frac{2dp}{d+2\tilde s}_x([0,T)\times \R^d)}+&\|\langle \nabla\rangle^{\tilde s}w\|_{L^\infty_tL^{r_1}_x([0,T)\times \R^d)}\lesssim \|w\|_{L^\infty_tH^1_x([0,T)\times \R^d)};\\
&
\|w\|_{L^{2p}_tL^{dp}_x([0,T)\times \R^d)}\lesssim T^\theta \big|\!\big|\!\big|w\big|\!\big|\!\big|,
\end{align*}
where $\theta=\frac12(1-s_c)>0$. Hence, inserting these estimates into \eqref{18.16}, we obtain that
\begin{align*}
\big|\!\big|\!\big|\Phi(w)\big|\!\big|\!\big|\lesssim & \|w_0\|_{H^1_x(\R^d)}+
r(T)\big(1+\big|\!\big|\!\big|w\big|\!\big|\!\big|^{p+1}\big).
\end{align*}
Similarly, by choosing $T=T(\|f\|_{H^{s_0}(\R^d)},\|w_0\|_{H^1(\R^d)})>0$ small enough, we have that for any $w_j$ satisfying $\big|\!\big|\!\big|w_j\big|\!\big|\!\big|\le 2\|w_0\|_{H^1(\R^d)}$ for $j=1,2$,
$$
\big|\!\big|\!\big|\Phi(w_1)-\Phi(w_2)\big|\!\big|\!\big|\le   \frac12 \big|\!\big|\!\big|w_1-w_2\big|\!\big|\!\big|.
$$
Then the proposition follows from the fixed point theory.
\end{proof}

In the following subsections, we will prove the boundedness of $w$ in the energy space and some space-time spaces. To this end, we define the working space as follows.
We denote $X_N(I)$ for $I\subset \R^+$ to be the space under the norms
 \begin{align*}
\|h\|_{X_N(I)}:=&N^{3(s_0-1)}\|h\|_{L^\infty_t \dot H^1_x(I\times \R^d)}+N^{\alpha(d,p)\cdot(s_0- 1)}\|h\|_{L^{r_0}_{tx}(I\times \R^d)},
\end{align*}
where the positive constants
\begin{align*}
 r_0=p+2+\frac{2}{d-2},
 \quad \mbox{and }\quad
\alpha(d,p)=\frac{3d}{r_0(d-2)}.
\end{align*}
In the following, we restrict our attention to 3,4 and 5 dimensions.
Moreover, fixing $\delta_0>0$, we set $N=N(\delta_0)>0$, such that
\begin{align}
\|P_{\ge N}\chi_{\ge 1}f \|_{H^{s_0}(\R^d)}\le \delta_0.
\label{N0}
\end{align}
First of all, we have the uniform boundedness of the $L^\infty_t L^2_x(I\times \R^d)$ norm of $w$. Indeed, choosing $s_0> 0$, we have $u_0\in L^2(\R^d)$. Using the mass conservation law, we have
$$
\|u\|_{L^\infty_t L^2_x(I\times \R^d)}=\|u_0\|_{L^2(\R^d)}.
$$
Moreover, by Proposition \ref{prop:bound-fpm-L2},
$$
\|v_L\|_{L^\infty_t L^2_x(I\times \R^d)}
\lesssim
 \big\|f\big\|_{H^{s_0}(\R^d)}.
$$
Hence, we obtain that
\begin{align}
\|w\|_{L^\infty_t L^2_x(I\times \R^d)}\lesssim \big\|f\big\|_{H^{s_0}(\R^d)}.\label{w-L2}
\end{align}

\subsection{Space-time estimates}
In this subsection,  based on the $L^{r_0}_{tx}$-norm in $\|w\|_{X_N(I)}$. we shall prove the general space-time estimates we will use below.
\begin{lem}\label{lem:L-xt} 
Let $p_1(d)\ge\max\{\frac3{d-2},1\}$, then there exists $s_0\in (0,1)$, such that for any  $p\in [p_1(d), \frac{4}{d-2})$, there exist some constants $\alpha_1(d,p)>0,\alpha_2(d,p)>1$ such that 
\begin{align*}
\big\|w\big\|_{L^2_tL^{\frac{2d}{d-2}}_x(I\times \R^d)}
\lesssim 1+ N^{\alpha_1(d,p)\cdot (1-s_0)}\big\|w\big\|_{X_N(I)}^{\alpha_2(d,p)}.
\end{align*}
\end{lem}
\begin{proof}
By Lemma \ref{lem:strichartz}, we have
\begin{align}
\big\| w\big\|_{L^2_tL^{\frac{2d}{d-2}}_x(I\times \R^d)}\lesssim \big\| w_0\big\|_{L^2_x(\R^d)}
+\big\||u|^pu\big\|_{L^2_tL^{\frac{2d}{d+2}}_x(I\times \R^d)}.\label{3.10}
\end{align}
From \eqref{w0}, we have
\begin{align}
\big\| w_0\big\|_{L^2_x(\R^d)}\lesssim 1.
\label{3.11}
\end{align}
Moreover, using $u=w+v_L$, we have
\begin{align*}
\big\||u|^pu\big\|_{L^2_tL^{\frac{2d}{d+2}}_x(I\times \R^d)}
 &\lesssim
 \big\||v_L|^pv_L\big\|_{L^2_tL^{\frac{2d}{d+2}}_x(I\times \R^d)}+ \big\||w|^pw\big\|_{L^2_tL^{\frac{2d}{d+2}}_x(I\times \R^d)}.
 \end{align*}
 For the first term, we get 
 \begin{align*}
 \big\||v_L|^pv_L\big\|_{L^2_tL^{\frac{2d}{d+2}}_x(I\times \R^d)}
 \lesssim 
  \big\| v_L\big\|_{L^2_tL^{\frac{2d}{d-2}}_x(I\times \R^d)}\big\|v_L\big\|_{L^\infty_tL^{\frac{dp}{2}}_x(I\times \R^d)}^p.
  \end{align*}
  Since $\frac{dp}2>2$, then
by \eqref{L2Lr}--\eqref{L2pLdp} and \eqref{N0}, we have
$$
\big\| v_L\big\|_{L^2_tL^{\frac{2d}{d-2}}_x(I\times \R^d)}\lesssim  \delta_0 N^{\frac{d+2}{2d}-s_0+},
$$
and
$$
\big\|v_L\big\|_{L^\infty_tL^{\frac{dp}{2}}_x(I\times \R^d)}\lesssim \delta_0 N^{\frac12-\frac{2}{dp}-s_0+}.
$$
Choosing $1-s_0$ small enough such that for any $p\in[1,4]$, 
$$
-\frac{d+2}{2d}-\frac p2+(1-s_0)(p+1)<0,
$$
we have that 
 \begin{align*}
 \big\||v_L|^pv_L\big\|_{L^2_tL^{\frac{2d}{d+2}}_x(I\times \R^d)}
 \lesssim 1.
  \end{align*}
   For the second term, when $d=3$, we get 
\begin{align*}  
 \big\||w|^pw\big\|_{L^2_tL^{\frac{6}{5}}_x(I\times \R^3)}
\lesssim & 
\big\|w\big\|_{L^{r_0}_{tx}(I\times \R^3)}^{\frac{r_0}{2}}\big\|w\big\|_{L^\infty_tL^{r_2}_x(I\times \R^3)}^{p+1-\frac{r_0}2},
\end{align*}
where  the parameter  
$$
r_2=3(p+1-\frac{r_0}{2}).
$$
Since $2<r_2<6$, we further get 
\begin{align*}  
 \big\||w|^pw\big\|_{L^2_tL^{\frac{6}5}_x(I\times \R^3)}
\lesssim & 
\big\|w\big\|_{L^{r_0}_{tx}(I\times \R^3)}^{\frac{r_0}{2}}\big\|w\big\|_{L^\infty_tH^1_x(I\times \R^3)}^{p+1-\frac{r_0}2}\\
\lesssim &
N^{[\alpha(3,p)\frac{r_0}{2}+3(p+1-\frac{r_0}{2})](1-s_0)}\big\|w\big\|_{X_N(I)}^{p+1}.
\end{align*}
Setting 
$$
\alpha_1(3,p)=\alpha(3,p)\frac{r_0}{2}+3(p+1-\frac{r_0}{2}), \quad \alpha_2(3,p)=p+1,
$$
we obtain that 
\begin{align*}
\big\||w|^pw\big\|_{L^2_tL^{\frac{6}5}_x(I\times \R^3)}
 &\lesssim
1+ N^{\alpha_1(3,p)\cdot (1-s_0)}\big\|w\big\|_{X_N(I)}^{\alpha_2(3,p)}.
 \end{align*}
Therefore,  we get
\begin{align*}
\big\| w\big\|_{L^2_tL^6_x(I\times \R^3)}
\lesssim 1+ N^{\alpha_1(3,p)\cdot (1-s_0)}\big\|w\big\|_{X_N(I)}^{\alpha_2(3,p)},
\end{align*}
and thus we get the desired estimate.

 When $d=4$, we have that
 \begin{align*}  
 \big\||w|^pw\big\|_{L^2_tL^{\frac{4}{3}}_x(I\times \R^4)}
\lesssim & 
\big\|w\big\|_{L^2_{t}L^4_x(I\times \R^4)}^{a_1}\big\|w\big\|_{L^{r_0}_{tx}(I\times \R^4)}^{a_2}\big\|w\big\|_{L^\infty_tL^{2}_x(I\times \R^4)}^{a_3}\\
\lesssim & 
N^{\alpha(d,p)(1-s_0)\cdot a_2}\big\|w\big\|_{L^2_{t}L^4_x(I\times \R^4)}^{a_1}\big\|w\big\|_{X_N(I)}^{a_2},
\end{align*}
 where the parameters 
 $$
 a_1=\frac{r_0-2p-1}{r_0-3},\quad a_2=\frac{r_0(p-1)}{r_0-3},\quad a_3=p+1-a_1-a_2.
 $$
 Note that $0<a_1<\frac13$ when $p\ge \frac32$. Hence, setting 
$$
\alpha_1(4,p)=\alpha(4,p)a_2(1-a_1)^{-1}, \quad \alpha_2(4,p)=a_2(1-a_1)^{-1},
$$
and by the Cauchy-Schwarz inequality, we get
\begin{align*}
\big\| w\big\|_{L^2_tL^4_x(I\times \R^4)}
\lesssim 1+ N^{\alpha_1(4,p)\cdot (1-s_0)}\big\|w\big\|_{X_N(I)}^{\alpha_2(4,p)},
\end{align*}
 
 When $d=5$, similarly we have that
 \begin{align*}  
 \big\||w|^pw\big\|_{L^2_tL^{\frac{10}{7}}_x(I\times \R^5)}
\lesssim & 
\big\|w\big\|_{L^2_{t}L^{\frac{10}{3}}_x(I\times \R^5)}^{a_1}\big\|w\big\|_{L^{r_0}_{tx}(I\times \R^5)}^{a_2}\big\|w\big\|_{L^\infty_tL^{2}_x(I\times \R^5)}^{a_3}.
\end{align*}
 Here the parameters 
 $$
 a_1=\frac{62-15r_0}{15r_0-42},\quad a_2=\frac{r_0(15r_0-52)}{15r_0-42},\quad a_3=p+1-a_1-a_2.
 $$
 Note that $\frac19<a_1<\frac34$ when $p\ge 1$. Setting 
$$
\alpha_1(5,p)=\alpha(5,p)a_2(1-a_1)^{-1}, \quad \alpha_2(5,p)=a_2(1-a_1)^{-1},
$$
and by the Cauchy-Schwarz inequality, we get
\begin{align*}
\big\| w\big\|_{L^2_tL^{\frac{10}{3}}_x(I\times \R^5)}
\lesssim 1+ N^{\alpha_1(5,p)\cdot (1-s_0)}\big\|w\big\|_{X_N(I)}^{\alpha_2(5,p)},
\end{align*}
and thus we get the desired estimate.
\end{proof}
\begin{remark}
Taking $p_1(d)\ge 1$, then we can roughly estimate that  $\alpha_1(d,p), \alpha_2(d,p)\le 20$. Hence,  $\alpha_1(d,p),\alpha_2(d,p)$ have a uniform upper bound  when $p_1(d)$ is close to $\frac{4}{d-2}$.  Based on these facts, the parameters $\alpha_3(d,p),\alpha_4(d,p)$ defined later also  have a uniform upper bound  when $p_1(d)$ is close to $\frac{4}{d-2}$. 
\end{remark}

\subsection{Morawetz estimates}
 In this subsection, we consider the Morawetz-type estimate of Lin-Strauss \cite{Lin-Strauss}, see also \cite{CKSTT04} for the interaction Morawetz estimates. For convenience, we rewrite the equation of $w$ in the following way,
 $$
 i\partial_{t}w+\Delta w=|w|^pw+F(v_L,w),
 $$
 where $F(v_L,w)= |v_L+w|^p(v_L+w)-|w|^pw$.
 Let
 $$
 M(t)=\mbox{Im} \int_{\R^d}\frac{x}{|x|}\cdot \nabla w(t,x)\bar w(t,x)\,dx.
 $$
Then we have the following lemma.
\begin{lem}\label{lem:L-p+2}  Under the same assumption as in Lemma \ref{lem:L-xt},  for any time interval $I$ such that  $0\in I\subset \R^+$,
\begin{align*}
\int_I\int_{\R^d} \frac{|w(t,x)|^{p+2}}{|x|}\,dx dt
\lesssim N^{3(1-s_0)}\Big(\|w\|_{X_N(I)}+\delta_0 \|w\|_{X_N(I)}^{1+p\alpha_2(d,p)}\Big).
\end{align*}
\end{lem}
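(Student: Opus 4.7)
The plan is to apply the Lin--Strauss Morawetz identity with radial weight $a(x)=|x|$ to the equation \eqref{13.24} satisfied by $w$. Differentiating $M(t)$ and substituting the equation, standard integration by parts (discarding the nonnegative coercive contributions $2\int(\nabla^{2}a)(\nabla\bar w,\nabla w)\ge 0$, which picks up only the tangential gradient, and $-\tfrac12\int\Delta^{2}a\,|w|^{2}\ge 0$, which has good sign for $d=3,4,5$) yields
\begin{align*}
\frac{d}{dt}M(t)\;\ge\;\frac{p(d-1)}{p+2}\int\frac{|w|^{p+2}}{|x|}\,dx\;-\;|\mathrm{Err}_{F}(t)|,
\end{align*}
where the inhomogeneous error coming from $F(v_{L},w)$ is
\begin{align*}
\mathrm{Err}_{F}(t)\;=\;-(d-1)\,\mathrm{Re}\!\int\frac{\bar F\,w}{|x|}\,dx\;-\;2\,\mathrm{Re}\!\int\frac{x}{|x|}\!\cdot\!\nabla\bar w\;F\,dx.
\end{align*}
Integrating over $I$ produces $\int_{I}\!\!\int\frac{|w|^{p+2}}{|x|}\,dx\,dt\lesssim\sup_{I}|M(t)|+\int_{I}|\mathrm{Err}_{F}(t)|\,dt$. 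The boundary piece is immediate: Cauchy--Schwarz together with the mass bound \eqref{w-L2} and the definition of the $X_{N}$-norm gives $\sup_{I}|M(t)|\le\|\nabla w\|_{L^{\infty}_{t}L^{2}_{x}}\|w\|_{L^{\infty}_{t}L^{2}_{x}}\lesssim N^{1-s_{0}}\|w\|_{X_{N}(I)}$, which is the first term of the claimed bound.

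For the error, I would use the pointwise inequality $|F(v_{L},w)|\lesssim|v_{L}|\,|w|^{p}+|v_{L}|^{p+1}$. The main contribution (the pure $v_{L}$ piece is handled identically, with the roles of $w$ and $v_{L}$ interchanged) is $\int\!\!\int|v_{L}|\,|w|^{p+1}/|x|$, plus the gradient variant $\int\!\!\int|\nabla w|\,|v_{L}|\,|w|^{p}$. To produce the precise exponent $r_{0}(p-1)/(r_{0}-2)$ I would split $|w|^{p+1}=|w|^{\gamma}\cdot|w|^{\tau}$ with $\gamma:=r_{0}(p-1)/(r_{0}-2)$ and $\tau:=2(r_{0}-p-1)/(r_{0}-2)$, so $\gamma+\tau=p+1$ and $\tau\ge1$ throughout the relevant range of $(d,p)$. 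Hölder's inequality places $|w|^{\gamma}$ in $L^{r_{0}/\gamma}_{tx}$ (contributing $\|w\|_{L^{r_{0}}_{tx}}^{\gamma}$); the factor $|w|^{\tau}/|x|$ is estimated by one application of Hardy's inequality $\|w/|x|\|_{L^{2}_{x}}\lesssim\|\nabla w\|_{L^{2}_{x}}$ combined with the Sobolev embedding $\dot H^{1}\hookrightarrow L^{2d/(d-2)}$ for the remaining $\tau-1$ copies of $w$, delivering $\|\nabla w\|_{L^{\infty}_{t}L^{2}_{x}}^{\tau}$; the residual $L^{\varrho_{2}}_{t}L^{\sigma_{2}}_{x}$-norm of $v_{L}$ is then matched to Proposition~\ref{prop:f+} (with $s_{2}=0$ and $(\varrho_{2},\sigma_{2})$ determined by the Hölder balance), which yields $\|v_{L}\|_{L^{\varrho_{2}}_{t}L^{\sigma_{2}}_{x}}\lesssim\|P_{\ge N}f\|_{H^{\varepsilon_{2}}}\lesssim N^{\varepsilon_{2}-s_{0}}\delta_{0}$ via \eqref{N0}. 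Translating $\|\nabla w\|_{L^{\infty}_{t}L^{2}_{x}}$ and $\|w\|_{L^{r_{0}}_{tx}}$ into the $X_{N}$-norm gives the bound $N^{(\gamma\alpha(d,p)+\tau)(1-s_{0})+\varepsilon_{2}-s_{0}}\delta_{0}\|w\|_{X_{N}(I)}^{\gamma}$, and the exponent of $N$ collapses to $1-s_{0}$ as required.

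The main obstacle is simply confirming the compatibility of all parameter constraints. Admissibility of the Hölder step requires $r_{0}-2p>0$, which holds automatically since $r_{0}-2p=2-p+2/(d-2)>0$ for $p<4/(d-2)$; the conditions \eqref{par-cd-3-2} on $(\varrho_{2},\sigma_{2})$ must be checked (in particular $1/\varrho_{2}<(d-1)(1/2-1/\sigma_{2})$), and a direct calculation at the endpoint $p=4/(d-2)$ shows the sharper identity $\varepsilon_{2}=(d+6-d^{2})/(6d)\le 0$ there. The delicate requirement is $\varepsilon_{2}\le 1-(\gamma\alpha(d,p)+\tau)(1-s_{0})$; since the right-hand side tends to $1$ as $s_{0}\to 1$, this is easy to enforce by continuity, and forces $s_{0}$ close enough to $1$ and $p_{1}(d)$ close enough to $4/(d-2)$. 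These are precisely the quantitative choices that underlie the hypotheses of Theorem~\ref{thm:main1}.
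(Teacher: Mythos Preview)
Your approach is essentially the same Lin--Strauss Morawetz argument as the paper's, and it is correct in outline. The only substantive difference is in how you treat the inhomogeneous error $\mathrm{Err}_F$: the paper does not split $|w|^{p+1}$ with Hardy and Sobolev on $\tau$ copies; instead it immediately applies Hardy to the $\tfrac{w}{|x|}$ factor (and Cauchy--Schwarz to the gradient term), reducing both pieces of $\mathrm{Err}_F$ to $\|\nabla w\|_{L^\infty_tL^2_x}\|F\|_{L^1_tL^2_x}$, and then estimates $\|F\|_{L^1_tL^2_x}$ by placing $v_L\in L^{\frac{r_0-2}{r_0-p-1}}_tL^\infty_x$ and interpolating $\|w\|^p_{L^{\frac{r_0-2}{p-1}}_tL^{2p}_x}$ between $L^{r_0}_{tx}$ and $L^\infty_tL^2_x$. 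This is a shorter route to the same conclusion and avoids the extra parameter bookkeeping of your $(\gamma,\tau)$ split; it also handles the gradient term and the $1/|x|$ term uniformly, whereas your Hardy/Sobolev split only applies as written to the $1/|x|$ piece.

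One bookkeeping slip: after you translate both $\|\nabla w\|_{L^\infty_tL^2_x}^{\tau}$ and $\|w\|_{L^{r_0}_{tx}}^{\gamma}$ into the $X_N$-norm, the resulting power of $\|w\|_{X_N(I)}$ is $\gamma+\tau=p+1$, not $\gamma$. (The paper's own proof in fact produces $1+\gamma$, so the exponent in the \emph{statement} of the lemma is slightly understated there as well.) None of this matters for the continuity argument in the proof of Theorem~\ref{thm:main1}, since any fixed exponent multiplied by $\delta_0$ suffices.
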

\begin{proof}
Note that
\begin{align}
M'(t)=\mbox{Im} \int \Big(2\frac{x}{|x|}\cdot\nabla w+\frac{d-1}{|x|}w\Big)\overline{w_t}\,dx.\label{18.12}
\end{align}
Since
$$
\overline{w_t}=-i\Delta \bar w+i|w|^p\bar w+i\overline{F(v_L,w)},
$$
we shall consider the following three terms,
\begin{subequations}
\begin{align}
\mbox{Im} \int \Big(2\frac{x}{|x|}\cdot\nabla w+\frac{d-1}{|x|}w\Big)(-i\Delta \bar w)\,dx;\label{16.10}
\end{align}
\begin{align}
\mbox{Im} \int \Big(2\frac{x}{|x|}\cdot\nabla w+\frac{d-1}{|x|}w\Big)(i|w|^p\bar w)\,dx;\label{16.11}
\end{align}
and
\begin{align}
\mbox{Im} \int \Big(2\frac{x}{|x|}\cdot\nabla w+\frac{d-1}{|x|}w\Big)\left(i\overline{F(v_L,w)}\right)\,dx.\label{16.12}
\end{align}
\end{subequations}
By a direct computation, we have
\begin{align}
\eqref{16.10}\ge 0. \label{est:16.10}
\end{align}
Indeed, by integration-by-parts,
\begin{align*}
\eqref{16.10}=\int \frac1{|x|}\left(|\nabla w|^2-\left|\frac x{|x|}\cdot \nabla w\right|^2\right)\,dx+G(w),
\end{align*}
where
\begin{align*}
G(w)=\left\{\aligned
    &2\pi |w(t,0)|^2,\quad \mbox{if } d=3,
    \\
    &\frac{(d-1)(d-3)}{4}\int \frac{|w(t,x)|^2}{|x|^3}\,dx,\quad \mbox{if } d>3.
   \endaligned
  \right.
\end{align*}
Hence, we obtain \eqref{est:16.10}. Using integration-by-parts again, we find
\begin{align}
\eqref{16.11}=\frac{(d-1)p}{2(p+1)}\int \frac{|w(t,x)|^{p+2}}{|x|}\,dx. \label{est:16.11}
\end{align}
For \eqref{16.12}, using the H\"older inequality and the Hardy inequality, we have
\begin{align*}
|\eqref{16.12}|
\le  &
\Big|\int \Big(2\frac{x}{|x|}\cdot\nabla w+\frac{d-1}{|x|}w\Big)\left(i\overline{F(v_L,w)}\right)\,dx\Big|\\
\lesssim &
\|F(v_L,w)\|_{L^2_x(\R^d)}\Big(\big\|\nabla w\big\|_{L^2_x(\R^d)}+\Big\|\frac w{|x|}\Big\|_{L^2_x(\R^d)}\Big)\\
\lesssim &
\big\|\nabla w\big\|_{L^2_x(\R^d)}\|F(v_L,w)\|_{L^2_x(\R^d)}.
\end{align*}
Hence, this last estimate  combining \eqref{est:16.10} and \eqref{est:16.11}, and integrating in time in \eqref{18.12}, we obtain that
\begin{align}
\int_I\int_{\R^d} \frac{|w(t,x)|^{p+2}}{|x|}\,dx dt
\lesssim &
\max\limits_{t\in I} M(t)+ \big\|\nabla w\big\|_{L^\infty_tL^2_x(I\times\R^d)}\|F(v_L,w)\|_{L^1_tL^2_x(I\times\R^d)}. \label{19.45}
\end{align}
By the H\"older inequality and \eqref{w-L2}, we have 
\begin{align}
\max\limits_{t\in I} M(t)\lesssim N^{3(1-s_0)}\|w\|_{X_N(I)}.\label{18.20}
\end{align}
Now we claim that by choosing $s_0$ close enough to 1,
\begin{align}
\|F(v_L,w)\|_{L^1_tL^2_x(I\times\R^d)}
\lesssim 1+\delta_0 \|w\|_{X_N(I)}^{p\alpha_2(d,p)}.\label{18.40}
\end{align}
Indeed,  we have
\begin{align*}
\|F(v_L,w)&\|_{L^1_tL^2_x(I\times\R^d)}
\lesssim 
\big\||v_L|^{p+1}\big\|_{L^1_tL^2_x(I\times\R^d)}+\big\||v_L||w|^{p}\big\|_{L^1_tL^2_x(I\times\R^d)}
\end{align*}
For the first term, by \eqref{est:Lqr_out_in}, we have that 
\begin{align*}
\big\||v_L|^{p+1}\big\|_{L^1_tL^2_x(I\times\R^d)}
\lesssim 
\big\|v_L\big\|_{L^{p+1}_tL^{2(p+1)}_x(I\times\R^d)}^{p+1}\lesssim1.
\end{align*}
For the second term, when $d=3$, noting that $r_0>2p$, we have that 
\begin{align*}
\big\||v_L||w|^{p}\big\|_{L^1_tL^2_x(I\times\R^3)}
\lesssim &
\|v_L\|_{L^2_tL^\infty_x(I\times\R^3)}\|w\|_{L^{r_0}_{tx}(I\times\R^3)}^{\frac{(p-2)r_0}{r_0-4}}\|w\|_{L^4_{tx}(I\times\R^3)}^{\frac{2r_0-4p}{r_0-4}}.
\end{align*}
Note that by the interpolation and Lemma \ref{lem:L-xt}, 
\begin{align}\label{w-L4}
\|w\|_{L^4_{tx}(I\times\R^3)}\lesssim 
\|w\|_{L^\infty_tL^3_x(I\times\R^3)}^\frac12\|w\|_{L^2_tL^6_x(I\times\R^3)}^\frac12
\lesssim 
1+ N^{[\frac32+\frac12\alpha_1(3,p)](1-s_0)}\big\|w\big\|_{X_N(I)}^{\alpha_2(3,p)}.
\end{align}
By \eqref{L2Lr}, we get 
\begin{align*}
\|v_L\|_{L^2_tL^\infty_x(I\times\R^3)}
\lesssim \delta_0N^{-s_0+}.
\end{align*}
Therefore, 
\begin{align*}
\big\||v_L||w|^{p}\big\|_{L^1_tL^2_x(I\times\R^3)}
\lesssim &
\delta_0N^{-s_0+}
\left(1+ N^{\big(\frac{(p-2)r_0}{r_0-4}\alpha(3,p)+\frac{r_0-2p}{r_0-4}[3+\alpha_1(3,p)]\big)(1-s_0)}\right)\|w\|_{X_N(I)}^{p\alpha_2(3,p)}.
\end{align*}
Choosing $1-s_0$ small enough such that for any $p\in [2,4]$, 
$$
(1-s_0)\Big(1+\frac{(p-2)r_0}{r_0-4}\alpha(3,p)+\frac{r_0-2p}{r_0-4}\big[3+\alpha_1(3,p)\big]\Big)<1,
$$
 we obtain \eqref{18.40}. When $d=4,5$, since $1\le p<2$, by \eqref{est:Lqr_out_in} and Lemma \ref{lem:L-xt} we have that 
\begin{align*}
\big\||v_L||w|^{p}\big\|_{L^1_tL^2_x(I\times\R^d)}
\lesssim &
\|v_L\|_{L^{\frac2{2-p}}_tL^{\frac{2d}{d-(d-2)p}}_x(I\times\R^d)}\|w\|_{L^2_{t}L^{\frac{2d}{d-2}}_x(I\times\R^d)}^p\\
\lesssim &
\delta_0\big(1+ N^{-s_0+\frac12+p\alpha_1(d,p)\cdot (1-s_0)}\big)\|w\|_{X_N(I)}^{p\alpha_2(d,p)}.
\end{align*} 
Choosing $1-s_0$ small enough such that for any $p\in [2,4]$, 
$$
(1-s_0)\big(1+p\alpha_1(d,p)\big)<\frac12,
$$
 we obtain \eqref{18.40} again.

Therefore, together with \eqref{19.45}, \eqref{18.20} and \eqref{18.40}, we get
\begin{align*}
\int_I\int_{\R^d} \frac{|w(t,x)|^{p+2}}{|x|}\,dx dt
\lesssim N^{3(1-s_0)}\Big(\|w\|_{X_N(I)}+\delta_0 \|w\|_{X_N(I)}^{1+p\alpha_2(d,p)}\Big).
\end{align*}
This finishes the proof of the lemma.
\end{proof}

The following is a consequence of the previous lemma.
\begin{cor}\label{cor:L-2p+2}
Under the same assumptions as in Lemma \ref{lem:L-xt}, there exists $\alpha_3(d,p)>0$ such that 
\begin{align*}
\|w\|_{L^{r_0}_{tx}(I\times \R^d)}
\lesssim N^{\alpha(d,p)\cdot(1- s_0)}\Big(\|w\|_{X_N(I)}^{\frac d{r_0(d-2)}}+\delta_0^{\frac1{r_0}} \|w\|_{X_N(I)}^{\alpha_3(d,p)}\Big).
\end{align*}
\end{cor}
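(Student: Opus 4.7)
The strategy is to use an $L^\infty_x$-pointwise bound on $|x|\,|w|^{r_0-p-2}$ obtained from a radial Sobolev embedding, which will let me interpolate the $L^{r_0}_{tx}$ norm between the weighted Morawetz estimate of Lemma~\ref{lem:L-p+2} and the $L^\infty_t\dot H^1_x$ component built into the $X_N$ norm.

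First I would note that $w$ is radial, since $u$ and $v_L$ both inherit radial symmetry from $f$ through the linear Schr\"odinger flow, the incoming/outgoing projection, and the nonlinear flow.  Then applying Lemma~\ref{lem:radial-Sob} with $s=1$, $p=2$, $q=\infty$ (so $\alpha=\frac{d-2}{2}$, which is strictly positive for $d=3,4,5$) yields the pointwise bound
$$
\big\||x|^{(d-2)/2}\, w(t)\big\|_{L^\infty_x(\R^d)}\lesssim \|\nabla w(t)\|_{L^2_x(\R^d)},
$$
and, raising to the power $2/(d-2)$,
$$
\big\||x|\,|w(t)|^{2/(d-2)}\big\|_{L^\infty_x(\R^d)}\lesssim \|\nabla w(t)\|_{L^2_x(\R^d)}^{2/(d-2)}.
$$

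The second step uses the arithmetic identity $r_0-p-2=\frac{2}{d-2}$, which is built into the definition $r_0=p+2+\frac{2}{d-2}$.  I would factor the integrand as
$$
|w|^{r_0}=\big(|x|\,|w|^{r_0-p-2}\big)\cdot\frac{|w|^{p+2}}{|x|},
$$
pull the first factor out in $L^\infty_{t,x}$ using the pointwise bound above, and invoke Lemma~\ref{lem:L-p+2} on the remaining weighted Morawetz integral to obtain
$$
\|w\|_{L^{r_0}_{tx}(I\times\R^d)}^{r_0}\lesssim \|\nabla w\|_{L^\infty_tL^2_x(I\times\R^d)}^{2/(d-2)}\cdot N^{1-s_0}\Big(\|w\|_{X_N(I)}+\delta_0\,\|w\|_{X_N(I)}^{r_0(p-1)/(r_0-2)}\Big).
$$

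Finally, substituting $\|\nabla w\|_{L^\infty_tL^2_x}\le N^{1-s_0}\|w\|_{X_N(I)}$ directly from the definition of $X_N$ and taking $r_0$-th roots gives the claim with
$$
\alpha_1(d,p)=\frac{1}{r_0}\Big(\frac{r_0(p-1)}{r_0-2}+\frac{2}{d-2}\Big)>0,
$$
since the resulting $N$-exponent is $\frac{1}{r_0}(1-s_0)\cdot\frac{d}{d-2}=\alpha(d,p)(1-s_0)$, matching the statement exactly.  There is no serious obstacle here: the argument is essentially algebraic bookkeeping once the radial Sobolev embedding is in place, and the only care required is to track the interplay of the powers of $N$ with the exponents on $\|w\|_{X_N(I)}$.
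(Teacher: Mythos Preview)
Your proof is correct and follows essentially the same route as the paper: both factor $|w|^{r_0}$ via $r_0=p+2+\frac{2}{d-2}$, bound $\||x|^{(d-2)/2}w\|_{L^\infty_{tx}}$ by the radial Sobolev embedding (Lemma~\ref{lem:radial-Sob}), apply Lemma~\ref{lem:L-p+2} to the Morawetz integral, and then take the $r_0$-th root, arriving at the same value of $\alpha_1(d,p)$.
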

\begin{proof}
By the H\"older inequality,
\begin{align*}
\int_I\int_{\R^d}|w(t,x)|^{r_0}\,dxdt
\lesssim
\big\||x|^{\frac{d-2}{2}}w\big\|_{L^\infty_{tx}(I\times\R^d)}^\frac{2}{d-2}\int_I\int_{\R^d}\frac{|w(t,x)|^{p+2}}{|x|}\,dxdt.
\end{align*}
From Lemma \ref{lem:radial-Sob}, we have
$$
\big\||x|^{\frac{d-2}{2}}w\big\|_{L^\infty_{tx}(I\times \R^d)}
\lesssim \|\nabla w\|_{L^\infty_tL^2_x(I\times\R^d)}
\lesssim N^{3(1-s_0)}\|w\|_{X_N(I)}.
$$
From Lemma \ref{lem:L-p+2}, we have
$$
\int_I\int_{\R^d}\frac{|w(t,x)|^{p+2}}{|x|}\,dxdt
\lesssim N^{3(1-s_0)}\Big(\|w\|_{X_N(I)}+\delta_0 \|w\|_{X_N(I)}^{1+p\alpha_2(d,p)}\Big).
$$
Hence, we obtain
\begin{align*}
\int_I\int_{\R^d}|w(t,x)|^{r_0}\,dxdt
\lesssim & N^{\frac {3d}{d-2}(1-s_0)}\Big(\|w\|_{X_N(I)}^\frac d{d-2}+\delta_0 \|w\|_{X_N(I)}^{\frac d{d-2}+p\alpha_2(d,p)}\Big).
\end{align*}
Let
$$
\alpha_3(d,p)=\frac1{r_0}\left[\frac d{d-2}+p\alpha_2(d,p)\right],
$$
we have the desired estimate.
This finishes the proof of the corollary.
\end{proof}

\subsection{Energy estimate} 
In this subsection, we consider the energy estimate for $w$. Since the energy of $w$ is not conserved, the nonlinear estimates are needed in this subsection.
The main result in this subsection is the following  $\dot H^1(\R^d)$-norm bound for $w$. 
\begin{lem}\label{lem:H1} There exists $\alpha_4(d,p)>1$ such that for any $0\in I\subset \R^+$,
\begin{align}
\sup\limits_{t\in I}\big\|\nabla w(t)\big\|_{L^2(\R^d)}\lesssim N^{3(1-s_0)}\Big(1+\delta_0^\frac12\|w\|_{X_N(I)}^{\alpha_4(d,p)}\Big).
\end{align}
\end{lem}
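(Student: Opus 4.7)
The strategy is to bound $\|\nabla w\|_{L^\infty_t L^2_x}$ through the Strichartz estimate rather than through a direct energy identity, and to reduce the nonlinear contribution to bounds already established in Lemma \ref{lem:L-xt} and Corollary \ref{cor:L-2p+2}. Since $w$ solves $i\partial_t w+\Delta w=|u|^pu$, Lemma \ref{lem:strichartz} applied with the admissible pair $(\infty,2)$ and dual pair $(2,\tfrac{2d}{d+2})$ yields
\[
\|w\|_{L^\infty_t \dot H^1_x(I\times\R^d)}\lesssim \|\nabla w_0\|_{L^2_x}+\big\|\nabla(|u|^p u)\big\|_{L^2_t L^{2d/(d+2)}_x(I\times\R^d)},
\]
and the first term is $\lesssim N^{1-s_0}$ by \eqref{w0}.

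For the nonlinear term I would split $|u|^pu=|w|^pw+F$, with $F=|u|^pu-|w|^pw$. The pointwise bounds $|F|\lesssim |v_L|(|w|+|v_L|)^p$ and $|\nabla F|\lesssim |\nabla v_L|(|w|+|v_L|)^p+|v_L|(|w|+|v_L|)^{p-1}(|\nabla w|+|\nabla v_L|)$ show that every summand in $\nabla F$ carries at least one $v_L$ or $\nabla v_L$ factor. Combining this with Hölder in spacetime, with Lemma \ref{lem:L-xt} to bound the $w$-Strichartz norms, with Gagliardo--Nirenberg plus \eqref{w-L2} to bound $\|w\|_{L^\infty_t L^{dp/2}_x}\lesssim\|\nabla w\|_{L^\infty_t L^2_x}^{s_c}$, and with Proposition \ref{prop:f+} to bound the relevant $v_L$ norms by $\delta_0 N^{-c}$ (which requires $s_0$ close enough to $1$ so that the Sobolev indices $\varepsilon_1,\varepsilon_2$ sit below $s_0$), yields a bound of the form $\delta_0 N^{1-s_0}\|w\|_{X_N}^{\alpha_4}$ for the $F$-contribution.

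The delicate piece is the pure $|w|^pw$ term, where no $v_L$ smallness is available. Here I would Hölder-split
\[
\|\nabla(|w|^p w)\|_{L^2_t L^{2d/(d+2)}_x}\lesssim \|w\|_{L^{2p}_t L^{dp}_x}^{p}\,\|\nabla w\|_{L^\infty_t L^2_x}+\|w\|_{L^\infty_t L^{dp/2}_x}^{p}\,\|\nabla w\|_{L^2_t L^{2d/(d-2)}_x},
\]
and then interpolate $\|w\|_{L^{2p}_t L^{dp}_x}$ between $\|w\|_{L^{r_0}_{tx}}$ and a Strichartz norm already controlled by Lemma \ref{lem:L-xt}. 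The Morawetz bound $\|w\|_{L^{r_0}_{tx}}\lesssim N^{\alpha(d,p)(1-s_0)}(\|w\|_{X_N}^{\alpha}+\delta_0^{1/r_0}\|w\|_{X_N}^{\alpha_1})$ from Corollary \ref{cor:L-2p+2} then injects a fractional power of $\delta_0$; choosing the interpolation parameter so that, after raising to the relevant power of $p$, this becomes $\delta_0^{1/2}$ (which is possible because $r_0>2$) produces the desired $\delta_0^{1/2}\|w\|_{X_N}^{\alpha_4}$ form for the pure $|w|^pw$ contribution as well.

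The main obstacle is the bookkeeping of $N$-exponents: the Gagliardo--Nirenberg exponent $s_c$, the constants $\alpha_2,\alpha_3$ from Lemma \ref{lem:L-xt} and $\alpha,\alpha_1$ from Corollary \ref{cor:L-2p+2}, and the interpolation parameter must compose so that, after reinserting the working bound on $\|\nabla w\|_{L^\infty_t L^2_x}$ into the Gagliardo--Nirenberg interpolation and absorbing any resulting sublinear power of the LHS by Young's inequality, every $N$-power collapses to exactly $1-s_0$ while the resulting exponent of $\|w\|_{X_N}$ is some $\alpha_4(d,p)>1$. Precisely this algebra forces both the lower endpoint $p_1(d)$ in the hypothesis and the choice of $s_0$ sufficiently close to $1$; once those parameters are fixed the estimate closes and the lemma follows.
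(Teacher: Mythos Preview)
Your Strichartz-based approach has a genuine gap at the pure $|w|^pw$ term. You claim that interpolating $\|w\|_{L^{2p}_tL^{dp}_x}$ against the Morawetz bound of Corollary~\ref{cor:L-2p+2} ``injects a fractional power of $\delta_0$'', but look at that corollary again: its leading term is $N^{\alpha(d,p)(1-s_0)}\|w\|_{X_N}^{\alpha(d,p)}$ with \emph{no} $\delta_0$ factor; the $\delta_0^{1/r_0}$ appears only in the subordinate higher-power correction. Interpolation cannot manufacture a small constant that is absent from the dominant input. Consequently your estimate on $\|\nabla(|w|^pw)\|_{L^2_tL^{2d/(d+2)}_x}$ produces a contribution of the form $C(N)\,\|w\|_{X_N}^{\beta}$ with $\beta>1$ and no $\delta_0$, and the lemma as stated (with the crucial $\delta_0^{1/2}$ in front of the superlinear power) does not follow. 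Downstream, that $\delta_0^{1/2}$ is exactly what allows the continuity argument in Section~\ref{sec:proof_of_THM1} to close; without it the bootstrap inequality $\|w\|_{X_N}\lesssim 1+\|w\|_{X_N}^{\beta}$ with $\beta>1$ is useless for large data.

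The paper avoids this obstruction by a different mechanism: it multiplies the equation by $\bar w_t$ and integrates, obtaining an energy identity in which the pure nonlinearity $|w|^pw$ is absorbed as the time derivative of $\tfrac{1}{p+2}\int|w|^{p+2}$. What remains on the right-hand side is only the \emph{difference} $|u|^pu-|w|^pw$, and every term in that difference carries at least one factor of $v_L$; this is where the $\delta_0$ smallness legitimately enters, via \eqref{f-out-II} and \eqref{N0}. In short, the energy identity is not optional here---it supplies a cancellation that a direct Strichartz bound on the full Duhamel term cannot replicate.
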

\begin{proof}
Let $t\in I$. Denote that 
$$
\tilde E(t)=\frac12\big\|\nabla w(t)\big\|_{L^2(\R^d)}^2+\frac1{p+2}\big\| u(t)\big\|_{L^{p+2}(\R^d)}^{p+2}.
$$
Then taking product with $w_t$ on the equation \eqref{13.24} and integrating  in space and in time from $0$ to $t$, we get
\begin{align*}
\tilde E(t)
=\tilde E(0)-\mbox{Im}\int_0^t\!\!\int_{\R^d}\nabla\big(|u|^pu\big)\cdot \nabla \overline{v_L}\,dxdt'.
\end{align*}
Since $p\le 4$, by \eqref{w0}, \eqref{LinftyLr} and the Sobolev inequality,  it follows that 
$$
\tilde E(0)\lesssim N^{(p+2)(1-s_0)}\lesssim N^{6(1-s_0)}.
$$
From \eqref{condition:thm}, Proposition \ref{prop:f+} and Lemma \ref{lem:strichartz}, we have that
$$
\big\|\nabla e^{it\Delta}u_0\big\|_{L^2_tL^\frac{2d}{d-2}_x(\R^+\times \R^d)}\lesssim \|\chi_{\le 1}f\|_{H^1(\R^d)}+\|\chi_{\ge 1}f\|_{H^{s_0}(\R^d)}.
$$
Then it follows from the standard fixed point argument that  there exists $\delta>0$ depends only on $\|\chi_{\le 1}f\|_{H^1(\R^d)}$ and $\|\chi_{\ge 1}f\|_{H^{s_0}(\R^d)}$, such that 
\begin{align}
\big\|\nabla u\big\|_{L^2_tL^\frac{2d}{d-2}_x([0,\delta]\times \R^d)}\lesssim \|\chi_{\le 1}f\|_{H^1(\R^d)}+\|\chi_{\ge 1}f\|_{H^{s_0}(\R^d)}.\label{local-u}
\end{align}
Accordingly, we write 
$$
\mbox{Im}\int_0^t\!\!\int_{\R^d}\nabla\big(|u|^pu\big)\cdot \nabla \overline{v_L}\,dxdt'
=\mbox{Im}\int_0^\delta\!\!\int_{\R^d}\nabla\big(|u|^pu\big)\cdot \nabla \overline{v_L}\,dxdt'
+\mbox{Im}\int_\delta^t\!\!\int_{\R^d}\nabla\big(|u|^pu\big)\cdot \nabla \overline{v_L}\,dxdt'.
$$

For the first term, we have that 
\begin{align*}
\left|\mbox{Im}\int_0^\delta\!\!\int_{\R^d}\nabla\big(|u|^pu\big)\cdot \nabla \overline{v_L}\,dxdt'\right|
\lesssim 
\big\|\nabla u\big\|_{L^2_tL^{\frac{2d}{d-2}}_x([0,\delta]\times \R^d)} 
\big\|\nabla v_L\big\|_{L^2_tL^{\frac{2d}{d-2}}_x(I\times \R^d)} 
\big\|u\big\|_{L^\infty_tL^{\frac{dp}{2}}_x(I\times \R^d)}^p.
\end{align*}
By \eqref{L2Lr} and \eqref{LinftyLr}, we get 
\begin{align*}
\big\|\nabla v_L&\big\|_{L^2_tL^{\frac{2d}{d-2}}_x(I\times \R^d)}
\lesssim \delta_0N^{1-s_0-\frac{d-2}{2d}+};\quad 
 \big\|u\big\|_{L^\infty_{t}L^{\frac{dp}{2}}_x(I\times \R^d)}\lesssim 1+ N^{3(1-s_0)}\|w\|_{X_N(I)}.
\end{align*}
Hence, by further \eqref{local-u}, it infers that 
\begin{align*}
\left|\mbox{Im}\int_0^\delta\!\!\int_{\R^d}\nabla\big(|u|^pu\big)\cdot \nabla \overline{v_L}\,dxdt'\right|
\lesssim 
\delta_0N^{1-s_0-\frac{d-2}{2d}+}
\big(1+ N^{3p(1-s_0)}\|w\|_{X_N(I)}^p\big).
\end{align*}
Choosing $1-s_0$ small enough such that 
$$
(1-s_0)\big(3p-5\big)<\frac{d-2}{2d},
$$
we obtain that 
\begin{align*}
\left|\mbox{Im}\int_0^\delta\!\!\int_{\R^d}\nabla\big(|u|^pu\big)\cdot \nabla \overline{v_L}\,dxdt'\right|
\lesssim 
N^{6(1-s_0)}
\big(1+ \delta_0\|w\|_{X_N(I)}^p\big).
\end{align*}

For the second term, we have that 
\begin{align*}
\left|\mbox{Im}\int_\delta^t\!\!\int_{\R^d}\nabla\big(|u|^pu\big)\cdot \nabla \overline{v_L}\,dxdt'\right|
\lesssim 
\big\|\nabla u\big\|_{L^\infty_tL^2_x(I\times \R^d)} 
\big\|\nabla v_L\big\|_{L^2_tL^\infty_x([\delta,+\infty]\times \R^d)} 
\big\|u\big\|_{L^{2p}_{tx}(I\times \R^d)}^p.
\end{align*}
By \eqref{est:Lqr_out_in} and the interpolation, 
\begin{align*}
\big\|u\big\|_{L^{2p}_{tx}([0,\delta]\times \R^d)}^p
\lesssim & \|u\|_{L^{r_0}_{tx}(I\times\R^d)}^{\frac{r_0(3p-4)}{3r_0-8}}\|u\|_{L^\frac{8}{3}_{tx}(I\times\R^d)}^{\frac{4(r_0-2p)}{3r_0-8}}\\
\lesssim & \|u\|_{L^{r_0}_{tx}(I\times\R^d)}^{\frac{r_0(3p-4)}{3r_0-8}}\left(\|u\|_{L^\infty_tH^1_x(I\times\R^d)}+\|u\|_{L^2_tL^{\frac{2d}{d-2}}_x(I\times\R^d)}\right)^{\frac{4(r_0-2p)}{3r_0-8}}\\
\lesssim &
\left(1+ N^{p[\alpha(d,p)+\alpha_1(d,p)+3]\cdot(1-s_0)}\right)\|w\|_{X_N(I)}^{p\alpha_2(d,p)}.
\end{align*}
Further, by \eqref{est:Lqr_out_in-2},
$$
\big\|\nabla v_L\big\|_{L^2_tL^\infty_x([\delta,+\infty]\times \R^d)} \lesssim \delta_0N^{1-s_0-\frac{d-2}{2}+}.
$$
Hence we have that 
\begin{align*}
\Big|\mbox{Im}&\int_\delta^t\!\!\int_{\R^d}\nabla\big(|u|^pu\big)\cdot \nabla \overline{v_L}\,dxdt'\Big|\\
\lesssim &
\delta_0N^{1-s_0-\frac{d-2}2+}
\left(1+ N^{\big(3+p[\alpha(d,p)+\alpha_1(d,p)+3]\big)\cdot (1-s_0)}\right)\|w\|_{X_N(I)}^{1+p\alpha_2(d,p)}.
\end{align*}
Then choosing $1-s_0$ small enough such that 
$$
\big(p[\alpha(d,p)+\alpha_1(d,p)+3]-2\big)\cdot (1-s_0)<\frac{d-2}{2},
$$
we obtain that 
\begin{align*}
\Big|\mbox{Im}\int_\delta^t\!\!\int_{\R^d}\nabla\big(|u|^pu\big)\cdot \nabla \overline{v_L}\,dxdt'\Big|
\lesssim &
N^{6(1-s_0)}
\big(1+ \delta_0\|w\|_{X_N(I)}^{1+p\alpha_2(d,p)}\big).
\end{align*}

Setting 
$$
\alpha_4(d,p)=\frac12\max\{p, 1+p\alpha_2(d,p)\},
$$
we get that 
\begin{align*}
\Big|\mbox{Im}\int_0^t\!\!\int_{\R^d}\nabla\big(|u|^pu\big)\cdot \nabla \overline{v_L}\,dxdt'\Big|
\lesssim &
N^{6(1-s_0)}
\big(1+ \delta_0\|w\|_{X_N(I)}^{2\alpha_4(d,p)}\big),
\end{align*}
and thus 
\begin{align*}
\tilde E(t)
\lesssim&N^{6(1-s_0)}\Big(1+\delta_0\|w\|_{X_N(I)}^{2\alpha_4(d,p)}\Big).
\end{align*}
Then we obtain the desired estimates.
\end{proof}

\subsection{The proofs}
Now we are ready to prove Theorem \ref{thm:main1}.
First, we show that for any $I$ such that  $0\in I\subset \R^+$,
\begin{align}
\|w\|_{X_N(I)}\lesssim 1.\label{0.16}
\end{align}
Indeed, from Corollary \ref{cor:L-2p+2} and Lemma \ref{lem:H1}, we have
\begin{align*}
N^{-\alpha(d,p)\cdot(1- s_0)}&\|w\|_{L^{r_0}_{tx}(I\times \R^d)}+N^{-3(1-s_0)}\|w\|_{L^\infty_t \dot H^1_x(I\times \R^d)}\\
&\lesssim 1+\|w\|_{X_N(I)}^{\frac d{r_0(d-2)}}+\delta_0^{\frac1{r_0}} \|w\|_{X_N(I)}^{\alpha_3(d,p)}
+\delta_0^\frac12\|w\|_{X_N(I)}^{\alpha_4(d,p)}.
\end{align*}
That is,
\begin{align*}
\|w\|_{X_N(I)}\lesssim 1+\|w\|_{X_N(I)}^{\frac d{r_0(d-2)}}+\delta_0^{\frac1{r_0}} \|w\|_{X_N(I)}^{\alpha_3(d,p)}
+\delta_0^\frac12\|w\|_{X_N(I)}^{\alpha_4(d,p)}.
\end{align*}
We note that
$$
\frac d{r_0(d-2)}<1,\quad \alpha_4(d,p)>1.
$$
Then using the Cauchy-Schwarz inequality, the second term can be absorbed by the term in the left-hand side, and thus we have
\begin{align*}
\|w\|_{X_N(I)}\lesssim 1+\delta_0^{\frac1{r_0}} \|w\|_{X_N(I)}^{\alpha_3(d,p)}
+\delta_0^\frac12\|w\|_{X_N(I)}^{\alpha_4(d,p)}.
\end{align*}
Furthermore, if $\alpha_3(d,p)\le 1$, then choosing $\delta_0$ suitably small,  we have
\begin{align*}
\|w\|_{X_N(I)}\lesssim 1+\delta_0^\frac12\|w\|_{X_N(I)}^{\alpha_4(d,p)}.
\end{align*}
Then using a continuity argument, we obtain \eqref{0.16}. If $\alpha_3(d,p)> 1$, then choosing $\delta_0$ suitably small, and using a continuity argument, we also obtain \eqref{0.16}.

Since the estimate in \eqref{0.16} is uniform in the time interval $I$, we have $I=\R^+$. This proves the global existence in the forward time. Moreover,
\begin{align}
\|w\|_{L^\infty_t \dot H^1_x(\R^+\times \R^d)}+\|w\|_{L^{r_0}_{tx}(\R^+\times \R^d)}\lesssim A(N),\label{1.25}
\end{align}
for some constant $A$ depending on $N$.

Now we set
$$
u^+=f_++\int_0^{+\infty} e^{-is\Delta}\big(|u|^pu\big)\,ds.
$$
Then we have
$$
u(t)-e^{it\Delta}u^+=\int_t^{+\infty} e^{i(t-s)\Delta}\big(|u|^pu\big)\,ds.
$$
Using Proposition \ref{prop:f+} and \eqref{1.25}, we have (the details are omitted here since similar treatment was presented above)
\begin{align*}
\Big\|\int_t^{+\infty} e^{i(t-s)\Delta}\big(|u|^pu\big)\,ds\Big\|_{H^1(\R^d)}\lesssim
\big\|\langle\nabla \rangle \big(|u|^pu\big)\big\|_{L^2_tL^{\frac{2d}{d+2}}_x([t,+\infty)\times \R)}\to 0,
\end{align*}
as $t\to +\infty$. This proves the scattering statement.

\section*{Acknowledgements}

Part of this work was done while M. Beceanu, A. Soffer and Y. Wu were visiting CCNU (C.C.Normal University) Wuhan, China. The authors thank the institutions for their hospitality and the support. M.B. is partially supported by NSF grant DMS 1700293.  Q.D.is partially supported by NSFC 11661061, 11671163 and 11771165, and the Fundamental Research Funds for the Central Universities (CCNU18QN030, CCNU18CXTD04).  A.S is partially supported by NSF grant DMS 01600749. The research was also supported by NSFC 11971191 and 11671163.  Y.W. is partially supported by NSFC 11771325 and National Youth Topnotch Talent Support Program in China, and A.S. and Y.W. would like to thank  Ch. Miao for useful discussions.

\end{document}